\documentclass[11pt]{amsart}
\usepackage{amssymb,amsthm,amsmath,amstext}
\usepackage{mathrsfs}  % allows nice script font for sheaves
\usepackage{mathtools} % allows more extendible arrows
\usepackage[usenames,dvipsnames,svgnames,table]{xcolor}
\usepackage{graphicx}
\usepackage[all]{xy}
\usepackage{enumitem}
\usepackage[left=1in,top=1in,right=01in,bottom=1.4in]{geometry}

\numberwithin{equation}{section}

\theoremstyle{plain}
\newtheorem{prop}{Proposition}
\newtheorem{theo}[prop]{Theorem}
\newtheorem*{theo*}{Theorem}
\newtheorem{coro}[prop]{Corollary}

\newtheorem{lemm}[prop]{Lemma}

% Separate numbering for Theorem statements in the introduction

\theoremstyle{definition}
\newtheorem{defi}[prop]{Definition}

\newtheorem{ques}[prop]{Question}

\newtheorem{rema}[prop]{Remark}

\newcommand{\Z}{\mathbb Z}

\renewcommand{\P}{\mathbb P}

 \newcommand{\PP}{\mathbb{P}}

\begin{document}
\date{\today}

\title[Reconstructing function fields from Milnor K-theory]{Reconstructing function fields from Milnor K-theory}

\author[Anna Cadoret]{Anna Cadoret}
\address{\hspace{-0.5cm}Anna Cadoret\newline
IMJ-PRG -- Sorbonne Universit\'e, Paris, FRANCE\newline
\textit{anna.cadoret@imj-prg.fr}}

\author[Alena Pirutka]{Alena Pirutka}
\address{\hspace{-0.5cm}Alena Pirutka \newline
CIMS,
New York University, 
New York, U.S.A.\newline
National Research University Higher School of Economics, RUSSIAN FEDERATION\newline
\textit{pirutka@cims.nyu.edu}}

\maketitle

\begin{abstract}
 Let $F$ be a finitely generated regular field extension  of transcendence degree  $\geq 2$  over a  perfect field   $k$. We show that the multiplicative group $F^\times/k^\times$ endowed with the equivalence relation induced by algebraic dependence on $k$ determines the isomorphism class of  $F$ in a functorial way. As a special case of this result, we obtain that the isomorphism class of the graded Milnor $K$-ring $K^M_*(F)$ determines the isomorphism class of $F$, when $k$ is algebraically closed or finite. 
\end{abstract}

\vspace{.6cm}

\section{Introduction} 

\noindent This paper is motivated  by the following general question, for which no counter-example seems to be known. \\

\begin{ques}\label{Q}  Does the Milnor K-ring $K_*^M(F)$ determine the isomorphism class of the field $F$?\end{ques}

\noindent One may also ask whether or not the above holds in a functorial way, that is, whether or not the Milnor K-ring functor is fully faithfull from the groupoid of fields to the groupoid of $\Z_{\geq 0}$-graded rings. This naive functorial version does not hold in general as shown by the example of finite fields $F$ where $K_*^M(F)$ has extra  automorphisms induced by $x\rightarrow x^u$ for $u$ prime to $|F|-1$ on $K_1^M(F)$. To get a viable functorial version of Question \ref{Q}, one should at least kill such extra automorphisms. \\

 \noindent Since $K_1^M(F)=F^\times$,  Question \ref{Q} essentially reduces to reconstructing the additive structure of $F$ from the multiplicative group $F^\times$ endowed with additional data that can be detected by the Milnor K-ring. Our main result (Theorem \ref{TAD}) asserts that for  a finitely generated regular field extension $F$ of transcendence degree  $\geq 2$  over a  perfect field $k$, the multiplicative group $F^\times/k^\times$ endowed with the equivalence relation induced by algebraic dependence on $k$ determines the isomorphism class of  $F$ in a functorial way.  In Section \ref{Recons}, we show (Theorem \ref{Div}) that for  a finitely generated regular field extension  of  a field $k$ which is either  algebraically closed or  finite, the Milnor K-ring detects algebraic dependence. This is a consequence of deep K-theoretic results - the $n=2$ case of the Bloch-Kato conjecture \cite{MS} when $k$ is algebraically closed and of the Bass-Tate conjecture \cite{Tate} when $k$ is finite. Combined with Theorem \ref{TAD}, this enables us show that the Milnor K-ring modulo the ideal of divisible elements (resp. of torsion elements) determines in a functorial way finitely generated regular field extensions of  transcendence degree $\geq 2$ over  algebraically closed fields   (resp. over finite fields) (see Corollary \ref{FullMT}). In particular, this provides a purely K-theoretic description of the group of birational automorphisms of   normal  projective varieties of dimension $\geq 2$ over algebraically closed or finite fields (Corollary \ref{Auto}).\\

\subsection{Main Result}  Recall that a field extension $F/k$ is {\it regular} if $k$ is algebraically closed in $F$. Let $F/k$ be a regular  field extension.

\noindent 
\begin{defi}\label{AlgIndep}  We say that  $\overline{x},\overline{y}\in F^\times/k^\times $ are algebraically dependent and write $\overline{x}\equiv  \overline{y} $ if  either $\overline{x}=1=\overline{y}$ or  $\overline{x}\neq 1\neq \overline{y}$ and some (equivalently, all) lifts $x,y\in F^\times$ of $\overline{x},\overline{y}\in F^\times/k^\times$ are algebraically dependent over $k$. The relation $\equiv $ is an equivalence relation on $F^\times/k^\times $.
\end{defi}

\noindent  Let $F/k$, $F'/k'$ be regular  field extensions.

\begin{defi}
 We say that a group morphism $\overline{\psi}: F^\times/k^\times \rightarrow F'^{\,\times}/k '^{\,\times}$ {\it preserves
 algebraic dependence} if  for every $\overline{x},\overline{y}\in F^\times/k^\times$ the following holds:  $\overline{x}\equiv  \overline{y} $ if and only if $\overline{\psi}(\overline{x})\equiv\overline{\psi}(\overline{y}) $.
\end{defi}
\noindent (In particular, a group morphism preserving algebraic dependence is automatically injective).\\

\noindent For a subfield $E\subset F$, write $\overline{E^F}\subset F$ for the algebraic closure of $E$ in $F$. Then a group morphism $\overline{\psi}: F^\times/k^\times \rightarrow F'^{\,\times}/k '^{\,\times}$   preserves
 algebraic dependence if and only if $\overline{k(x)^F}^\times/k^\times =\overline{\psi}^{-1}(\overline{k'(\psi(x))^{F'}}/k'^{\,\times})$ for every $x\in F$ and some (equivalently, every) lift $\psi(x)\in F'$ of $\overline{\psi}(\overline{x})$.\\

 \noindent  Let $\hbox{\rm Isom}(F,F')$  denote the set of field isomorphisms $F\tilde{\rightarrow} F'$ and  $$\hbox{\rm Isom}(F/k,F'/k')\subset \hbox{\rm Isom}(F,F') $$ denote the subset of isomorphisms $F\tilde{\rightarrow} F'$  inducing field isomorphisms $k\tilde{\rightarrow} k'$. \\

 \noindent Let $
\hbox{\rm Isom}(F^\times/k^\times, F'^{\,\times}/k '^{\,\times})
 $ denote the set of group isomorphisms $F^\times/k^\times\tilde{\rightarrow} F'^{\,\times}/k '^{\,\times}$ and $$\hbox{\rm Isom}^{\equiv}(F^\times/k^\times, F'^{\,\times}/k '^{\,\times})\subset \hbox{\rm Isom}(F^\times/k^\times, F'^{\,\times}/k '^{\,\times})$$ 
 the subset of isomorphisms $F^\times/k^\times\tilde{\rightarrow} F'^{\,\times}/k '^{\,\times}$ preserving algebraic dependence. The group  $\Z/2$ acts on the set $\hbox{\rm Isom}^{\equiv}(F^\times/k^\times, F'^{\,\times}/k '^{\,\times})$ by $\overline{\psi}\rightarrow \overline{\psi}^{\,-1}$. Write 
 $$\overline{\hbox{\rm Isom}}^{\equiv}(F^\times/k^\times, F'^{\,\times}/k '^{\,\times}) $$
for the resulting quotient.\\

\begin{theo}\label{TAD}
 Let $k,k'$ be  perfect  fields  of characteristic $p\geq 0$ and let $F/k$, $F'/k'$ be finitely generated regular field extensions of transcendence degree $\geq 2$.  Then the canonical map
$$ \hbox{\rm Isom}(F /k , F'/k ')\rightarrow \overline{\hbox{\rm Isom}}^{\equiv}(F^\times/k^\times, F'^{\,\times}/k '^{\,\times})$$
is bijective.
\end{theo}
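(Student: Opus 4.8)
The plan is to prove bijectivity by constructing an inverse map. The injectivity of the canonical map is essentially formal: a field isomorphism $\sigma\colon F\tilde\to F'$ taking $k$ to $k'$ induces a group isomorphism $\overline\sigma\colon F^\times/k^\times\tilde\to F'^{\,\times}/k'^{\,\times}$ which preserves algebraic dependence, and since $F$ is generated as a field by $F^\times$, distinct $\sigma$'s give distinct $\overline\sigma$'s; the only subtlety is the passage to the $\Z/2$-quotient, which is harmless because an isomorphism and its inverse go to classes identified in $\overline{\hbox{\rm Isom}}^{\equiv}$ only via the contravariant/covariant ambiguity. So the heart of the matter is surjectivity: given $\overline\psi\in\hbox{\rm Isom}^{\equiv}(F^\times/k^\times,F'^{\,\times}/k'^{\,\times})$, reconstruct a field isomorphism $F\tilde\to F'$ (or $F\tilde\to F'$, $F'\tilde\to F$ after accounting for the quotient) inducing it on multiplicative groups modulo constants.

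First I would recover the \emph{geometry of lines}. The key observation is that for a regular extension $F/k$ of transcendence degree $\geq 2$, the algebraic-dependence relation on $F^\times/k^\times$ lets one detect, purely group-theoretically, the one-dimensional subfields: $\overline{k(x)^F}^\times/k^\times$ is (a subgroup closely related to) a maximal $\equiv$-class through $\overline x$, and these play the role of "lines" through the point "$0$". More precisely, I would use the characterization stated in the excerpt: $\overline\psi$ preserving algebraic dependence means $\overline{k(x)^F}^\times/k^\times=\overline\psi^{-1}\bigl(\overline{k'(\psi(x))^{F'}}^{\,\times}/k'^{\,\times}\bigr)$ for all $x$. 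So $\overline\psi$ carries the poset of algebraically closed one-dimensional subfields of $F$ (containing $k$) isomorphically onto that of $F'$, compatibly with the multiplicative group data. The reconstruction of the additive structure then proceeds on each such subfield $L=\overline{k(x)^F}$: here $L/k$ is a regular extension of a perfect field of transcendence degree $1$, i.e. the function field of a smooth projective curve over $k$, and I would show that $\overline\psi$ restricts to a multiplicative isomorphism $L^\times/k^\times\tilde\to L'^{\,\times}/k'^{\,\times}$ of such curve function fields which must come from a field isomorphism — this is where one invokes the divisor-theoretic structure (a point of the curve $=$ a discrete valuation, detectable via the group $L^\times/k^\times\cong\operatorname{Div}^0$-type data, so that $\overline\psi$ induces a bijection on closed points), reducing to classical reconstruction results for curves (à la the function-field case of Artin–Schreier / the Fundamental Theorem of projective geometry adapted to curves). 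The compatibility across different lines $L_1,L_2$ meeting in enough points forces the local additive structures to glue into a global field isomorphism $F\tilde\to F'$.

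The technical engine I expect to rely on is a \emph{coordinatization} argument: pick $x,y\in F$ algebraically independent over $k$, so $k(x,y)\subset F$ and $F/k(x,y)$ is finite; the sub-$\equiv$-structure on $k(x,y)^\times/k^\times$ recovers the projective plane $\P^2_k$ with its lines, and the Fundamental Theorem of Projective Geometry (in the form allowing a field automorphism of $k$, here killed by the hypothesis that $\overline\psi$ is an isomorphism $F^\times/k^\times\to F'^{\,\times}/k'^{\,\times}$ over which $k\to k'$ is recovered as the common kernel-type data) gives a field isomorphism on the rational-function level compatible with $\overline\psi$; then one extends over the finite pieces using that algebraic dependence still sees the finitely many valuations. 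I would organize this as: (1) recover $k^\times$ intrinsically as $\{\overline x : \overline x\equiv\overline y \text{ for all }\overline y\}$-type condition, hence recover $k\tilde\to k'$; (2) recover closed points / divisors on each line and the incidence between lines; (3) apply projective-geometry reconstruction on a chosen plane to get a candidate isomorphism on a subfield; (4) bootstrap to all of $F$ by covering $F^\times/k^\times$ with such planes and checking the candidate maps agree on overlaps; (5) verify the resulting field isomorphism induces $\overline\psi$ (up to the $\Z/2$-ambiguity) and is unique.

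The main obstacle, I expect, is step (4)–(5): showing that the field isomorphism reconstructed \emph{locally} (on finitely generated subfields, or on individual lines/planes) is \emph{globally well-defined and unique}, i.e. that the additive structure recovered from one plane through a point agrees with that recovered from another. Transcendence degree $\geq 2$ is exactly what makes enough planes and enough lines available for this coherence argument to close — any two elements of $F$ lie in a common finitely generated regular subextension of transcendence degree $2$, and any two such subextensions are linked through a chain of others — but turning this into a clean gluing requires care about the algebraic (non-rational) locus, where one has only finitely many valuations to exploit rather than the full force of projective geometry. A secondary difficulty is bookkeeping the $\overline\psi\leftrightarrow\overline\psi^{-1}$ symmetry so that the map lands in the $\Z/2$-quotient $\overline{\hbox{\rm Isom}}^{\equiv}$ and is genuinely inverse to the canonical map, i.e. checking that no information is lost in the quotient and that the two composites are the identity.
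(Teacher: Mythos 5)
Your high-level frame (reduce to the fundamental theorem of projective geometry by showing $\overline\psi$ carries lines to lines) is the right one and is the paper's frame too, but the proposal has a genuine gap at exactly the point where all the work lies: you conflate two different notions of ``line''. The $\equiv$-classes, i.e.\ the subsets $\overline{k(x)^F}^{\times}/k^\times$, are indeed recovered for free from the hypothesis that $\overline\psi$ preserves algebraic dependence. But these are \emph{not} the lines FTPG needs; FTPG needs the projective lines $\frak{l}_k(\overline x,\overline y)=(kx\oplus ky)^\times/k^\times$, which are additive objects and are not visible in the data $(F^\times/k^\times,\cdot,\equiv)$ in any obvious way. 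Your two proposed substitutes both fail. First, reconstructing the additive structure ``on each one-dimensional subfield $L=\overline{k(x)^F}$'' cannot work: the restriction of $\equiv$ to $L^\times/k^\times$ is trivial (all nonidentity classes are mutually equivalent, since $L$ has transcendence degree $1$), $L^\times/k^\times$ is just a free abelian group with no further structure, and a curve's function field is not determined by its multiplicative group --- this is precisely why the theorem requires transcendence degree $\geq 2$. Moreover two distinct such subfields intersect only in $k$, so they never ``meet in enough points'' to glue local additive structures. Second, the assertion that the sub-$\equiv$-structure on $k(x,y)^\times/k^\times$ ``recovers $\P^2_k$ with its lines'' is exactly the statement that needs proof; you give no mechanism for it.

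The paper's mechanism, which your proposal is missing, is the Bogomolov--Tschinkel parametrization refined via differential forms: a line through a ``good pair'' $(\overline x_1,\overline x_2)$ (where $x_1$ is $F/k$-regular and $dx_1,dx_2$ are independent) is characterized, using only the multiplicative structure and $\equiv$, as (a piece of) the union $\frak{I}(\overline x_1,\overline x_2)$ of intersections $(\overline{k(x_1/x_2)^F}^\times\cdot x_2)\cap(\overline{k(y_1/y_2)^F}^\times\cdot y_2)$ over auxiliary pairs $y_i\in\overline{k(x_i)^F}$ --- that is, by playing \emph{several} one-dimensional subfields against one another multiplicatively, not by working inside a single one. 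Making this precise (Lemma \ref{ML}) requires solving a differential equation in $F$ and, in characteristic $p>0$, only pins the line down up to prime-to-$p$ powers and $F^p$-coefficients; one then gets a single exponent $m$ with $\overline\psi^m$ carrying lines of $F^\times/F^{p\times}$ to lines of $F'^{\,\times}/F'^{\,p\times}$ (Proposition \ref{Lines}), applies FTPG to $F/F^p$, and needs a further descent argument to force $m=\pm1$ and to pass from $F^\times/p$ back to $F^\times/k^\times$. None of these characteristic-$p$ obstructions appears in your plan, and even in characteristic $0$ the core characterization of projective lines is absent. Your discussion of injectivity and of the $\Z/2$-quotient is fine but peripheral.
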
 

\subsection{Comparison with existing results}
Question \ref{Q} was  considered by Bogomolov and Tschinkel in \cite{BTM}, where they prove  (a variant of) Theorem \ref{TAD} for finitely generated regular extensions of characteristic $0$ fields (\cite[Thm. 2]{BTM})  and deduce from it    Corollary \ref{FullMT} for finitely generated field extensions of algebraically closed fields of characteristic $0$ (\cite[Thm. 4]{BTM}). \\

\noindent Variants of our results were also obtained by Topaz  from a smaller amount of $K$-theoretic information - mod-$\ell$ Milnor $K$-rings  (for finitely generated field extensions of transcendence degree   $\geq 5$ over algebraically closed field of characteristic $p\not= \ell$ \cite[Thm. B]{Topaz}) and  rational Milnor $K$-rings  (for finitely generated field extensions of transcendence degree   $\geq 2$ over algebraically closed field of characteristic $0$ \cite[Thm. 6.1]{Topaz1}) 
 but enriched with the additional data of  the so-called ``rational quotients" of $F/k$.  See also \cite[Rem. 6.2]{Topaz1} for some cases where the additional data of rational quotients can be removed. \\
 
 \noindent  Our strategy follows the one of Bogomolov and Tschinkel in \cite{BTM}, where the key idea is to parametrize lines in $F^\times/k^\times$ as intersections of multiplicatively shifted (infinite dimensional) projective subspaces of a specific form  arising from relatively algebraically closed subextensions of transcendence degree $1$. See Subsection \ref{StrategyIntro} for details. The strategy of Topaz is more sophisticated and goes through the reconstruction of the quasi-divisorial valuations of $F$ \textit{via} avatars of the theory of commuting-liftable pairs as developped in the framework of birational anabelian geometry.  Though not explicitly stated in the literature, it is likely that Theorem \ref{TAD} and  Corollary \ref{FullMT}  for finitely generated field extensions algebraically closed fields of characteristic $p>0$ could also be recovered from the  techniques of birational anabelian geometry as developed by Bogomolov-Tschinkel \cite{BTIntro}, Pop (\textit{e.g.} \cite{PopDG}, \cite{Pop}) and Topaz.\\
 
 \noindent To our knowledge, Theorem \ref{TAD} for finitely generated regular extensions of perfect fields of characteristic $p>0$ and  Corollary \ref{FullMT} for finitely generated field extensions of finite fields are new. \\

\subsection{Strategy of proof}\label{StrategyIntro} For simplicity, write $F^p\subset F$ for the subfield generated by $k$ and the $x^p$, $x\in F$  and $F^\times/p:=F^\times/F^{p\times}$.\\

\noindent The proof of Theorem \ref{TAD} is carried out in Section 3.  According to the fundamental theorem of projective geometry (see Lemma \ref{FTPG}, for which we give a self-contained proof in the setting of possibly infinite field extensions), it would be enough to show that a group isomorphism $\overline{\psi} :F^\times/k^\times\tilde{\rightarrow} F'^{\,\times}/k '^{\,\times}$ preserving algebraic dependence induces a bijection from  lines in $F^\times/k^\times$ to  lines in $F'^{\,\times}/k'^{\,\times}$. This would reduce the problem to describing lines in $F^\times/k^\times$ using only $\equiv$ and the multiplicative structure of $F^\times/k^\times$. This classical approach works well if $p=0$. The key observation of Bogomolov and Tschinkel in  \cite{BTM} is that every line can be multiplicatively  shifted to a line passing through a ``good" pair of points  and that those lines can be uniquely parametrized as intersections of  multiplicatively shifted (infinite dimensional) projective subspaces of a specific form arising from relatively algebraically closed subextensions of transcendence degree $1$ \cite[Thm. 22]{BTM}. This is the output of elaborate computations in \cite{BTM}. Later, Rovinsky suggested an alternative argument using differential forms; this is sketched in \cite[Prop. 9]{BTIntro}.\\

\noindent  When $p>0$, the situation is  more involved. The original computations of \cite{BTM} fail due to inseparability phenomena. Instead, we adjust the notion of  ``good" for the pair of points (Definition \ref{Good}) in order to   refine   the argument of Rovinsky.  In particular, we use the field-theoretic notion of ``regular" element rather than the group-theoretic notion of ``primitive"  element used in \cite{BTM}. To show that every line can be shifted to a   line whose image contains    a ``good" pair of points  (Lemma \ref{Shift}),  one can invoke Bertini theorems (\cite[Cor. 6.11.3]{Jouanolou} when $k$ is infinite and  \cite[Thm. 1.6]{CP} when $k$ is finite); we also give  an alternative, more elementary argument due to Akio Tamagawa in Remark \ref{ShiftRem}. This reduces the problem to show that $\overline{\psi}$ (or $\overline{\psi}^{-1}$) maps every line in $F^\times/k^\times$ whose image contains  a good pair of points $(\overline{x}_1,\overline{x}_2)$  isomorphically to a line in  $F^{'\times}/k^{'\times}$. Actually, we cannot prove this directly when $p>0$. The issue is that, when $p>0$, the Bogomolov-Tschinkel parametrization of such line by the set $\frak{I}(\overline{x}_1,\overline{x}_2)$ introduced in Subsection \ref{Notation}
  is much rougher than in \cite[Thm. 22]{BTM} - up to prime-to-$p$ powers and certain affine transformations with $F^p$-coefficients (Lemma \ref{ML}); this is due to the apparition of constants in $F^p$ when one integrates differentials forms. Lemma \ref{ML}   is however enough to show that there exists a unique $m\in \Z$ normalized as 
 \begin{equation}\label{mcond}
\begin{tabular}[t]{ll}
 $|m|= 1$ &if $p=0,2$\\ 
 $1\leq |m|\leq \frac{p-1}{2}$ &if $p>2$;
 \end{tabular}
 \end{equation} 
such that $\overline{\psi}^m$ induces a bijection from lines in $F^\times/p$ to lines in $F'^{\,\times}/p$ (Proposition \ref{Lines}). So that Lemma \ref{FTPG}  gives a unique field isomorphism $\phi: F\tilde{\rightarrow} F'$ such that the resulting isomorphism of  groups $ \phi:F^\times \tilde{\rightarrow} F'^{\,\times }$ coincides with  $\overline{\psi}^m$   on $F^\times/p $. This concludes the proof if $p=0$. But if $p>0$, the extension $F/F^p$ is much smaller (finite-dimensional!) and one has to perform an additional descent step (Section \ref{TheEnd}) to show that $m=\pm 1$ and $\phi$ coincides with $\overline{\psi}^{\pm 1}$ on $F^\times/k^\times$ (not only on $F^\times/p$).\\

\noindent We limited our exposition to function fields, which  are those of central interest in algebraic geometry. However, some of our results extend to more exotic fields provided they behave like function fields. For instance, Theorem \ref{TAD} works for the class of regular field extensions $F$ of transcendence degree $\geq 2$ over a perfect field $k$ such that for every subfield $k\subset E\subset F$ of transcendence degree $ 2$ over $k$, the algebraic closure of $E$ in $F$ is a finite extension of $E$. We do not elaborate on this.

\subsection{Acknowledgements} 
 The first author was partially supported by the I.U.F. and the ANR grant ANR-15-CE40-0002-01. The second author was partially
supported by NSF grant DMS-1601680 and by the Laboratory of Mirror
Symmetry NRU HSE, RF Government grant, ag.\ no.\ 14.641.31.0001. This project was initiated at the occasion of a working group on  geometric birational anabelian geometry at the Courant Institute of Mathematics, NYU.  The
authors would like to thank F. Bogomolov and Y. Tschinkel for inspiring exchanges, B. Kahn for his explanations concerning the Bass-Tate conjecture and  A. Merkurjev, M. Morrow and A. Tamagawa for helpful remarks on a first version of this text. They also thank A. Tamagawa for suggesting the more elementary proof of Lemma \ref{Shift} presented in Remark \ref{ShiftRem}.

\section{Milnor K-rings and algebraic dependence}\label{Recons}

\noindent Let   $K_*^M(-)$ denote the Milnor K-ring functor, from the groupoid $\mathcal{F}$ of fields to the groupoid $\mathcal{A}$ of associative $\Z_{\geq 0}$-graded anti-commutative rings.\\

\noindent  For   $p= 0$ or a prime, let  $\mathcal{F}_p\subset \mathcal{F}$ denote the full subgroupoid of fields of characteristic $p$.

\subsection{Some geometric observations} Let $F/k$ be a finitely generated regular field extension.  
\subsubsection{} Let $x_1,\dots, x_n\in F$ be such  that $k(x_1,\dots, x_n)$ has transcendence degree $n$ over $k$. Then

\begin{lemm}\label{GeoLem}There exists a divisorial valuation $v $ of $F$ such that 
\begin{itemize}[leftmargin=* ,parsep=0cm,itemsep=0cm,topsep=0cm]
\item $v(x_1)\not=0$, $v(x_2)=\dots=v(x_n)=0$;
\item the images $\overline{x}_2,\dots, \overline{x}_n $ of $x_2,\dots, x_n $ in the residue field $k(v)$ are algebraically independent over $k$.
\end{itemize}
\end{lemm}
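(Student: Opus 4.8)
The plan is to obtain $v$ by extending to $F$ the \emph{hyperplane valuation} of $K:=k(x_1,\dots,x_n)$ attached to $\{x_1=0\}$. Since $x_1,\dots,x_n$ are algebraically independent over $k$, the $k$-subalgebra of $F$ they generate is a genuine polynomial ring $k[x_1,\dots,x_n]$, and localizing it at the height-one prime $(x_1)$ yields a discrete valuation ring with fraction field $K$ and residue field $k(x_2,\dots,x_n)$. Let $w$ be the corresponding $\Z$-valued valuation of $K$; then $w(x_1)=1$, $w(x_2)=\dots=w(x_n)=0$, the images of $x_2,\dots,x_n$ in $k(w)=k(x_2,\dots,x_n)$ are algebraically independent over $k$, and $\mathrm{tr.deg}_k\,k(w)=n-1=\mathrm{tr.deg}_k\,K-1$, so $w$ is a divisorial valuation of $K/k$.

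It then remains to propagate $w$ to $F$ without disturbing these properties. First pick a transcendence basis $t_1,\dots,t_r$ of $F/K$ and extend $w$ successively to $K(t_1,\dots,t_j)$ by the Gauss (monomial) valuation $v\bigl(\sum_i a_it_j^{\,i}\bigr):=\min_i v(a_i)$: each such extension keeps the value group equal to $\Z$ and enlarges the residue field by one transcendental element $\overline{t}_j$. Hence the resulting valuation $w'$ of $K':=K(t_1,\dots,t_r)$ is again $\Z$-valued and divisorial over $k$, still satisfies $w'(x_1)=1$ and $w'(x_i)=0$ for $i\geq 2$, and has residue field $k(x_2,\dots,x_n)(\overline{t}_1,\dots,\overline{t}_r)$. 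Since $F/K'$ is finite, $w'$ extends to a valuation $v$ of $F$; after renormalizing $v$ to be $\Z$-valued, $v\vert_{K'}$ is a positive integer multiple of $w'$, so $v(x_1)>0$ and $v(x_i)=0$ for $i\geq 2$, while $k(v)$ is a finite extension of $k(w')$, hence has transcendence degree $\mathrm{tr.deg}_k\,F-1$ over $k$ and still contains $x_2,\dots,x_n$ as algebraically independent elements. Thus $v$ is a divisorial valuation of $F$ satisfying both conditions.

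The one point needing care is the bookkeeping that keeps ``divisorial'' alive along the way — that each extension remains discrete of rank one and gains exactly the expected residue transcendence degree; this is standard for Gauss extensions, and for the passage to the finite extension $F/K'$ it follows from the finiteness of the ramification index and residue degree of an extension of a discrete valuation to a finite extension. Alternatively, one can argue purely geometrically, bypassing this: with $X$ a normal projective model of $F/k$, take $X'$ to be the normalization of the closure of the graph of the rational map $(1:x_1:\dots:x_n)\colon X\dashrightarrow\P^n$ in $X\times\P^n$, a normal projective model of $F$ carrying a dominant morphism $f\colon X'\to\P^n$; then choose an irreducible component $D$ of $f^{-1}(\{x_1=0\})$ dominating the hyperplane $\{x_1=0\}\cong\P^{n-1}$ — such a $D$ exists by the theorem on the dimension of the fibres of a dominant morphism — and put $v:=v_D$, for which $v_D(x_1)>0$, $v_D(x_i)=0$ for $i\geq 2$, and $k(v_D)=k(D)\supset k(x_2,\dots,x_n)$ are immediate.
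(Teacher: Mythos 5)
Your proposal is correct, and your primary argument takes a genuinely different route from the paper's. The paper argues purely geometrically: it fixes a normal projective model $X$, replaces it by the normalization of the closure of the graph of $\underline{x}=(x_1,\dots,x_n):X\dashrightarrow(\PP^1_k)^n$ so that $\underline{x}$ becomes a surjective morphism, and then takes $v=v_D$ for an irreducible divisor $D$ with $v_D(x_1)\neq 0$, observing that $\underline{x}|_D$ surjects onto $\{0\}\times(\PP^1_k)^{n-1}$. Your second, ``alternative'' argument is essentially this same proof (with $\PP^n$ in place of $(\PP^1)^n$), and is in fact slightly more careful on one point: you explicitly select $D$ to be a component of $f^{-1}(\{x_1=0\})$ \emph{dominating} the hyperplane, which is exactly what is needed for the residues $\overline{x}_2,\dots,\overline{x}_n$ to remain algebraically independent, whereas the paper phrases this as a property of an arbitrary divisor with $v_D(x_1)\neq 0$. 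Your main argument, by contrast, is valuation-theoretic: start from the hyperplane valuation on $k(x_1,\dots,x_n)$, propagate it by Gauss extensions along a transcendence basis of $F/K$, and finish with a finite extension, using the finiteness of ramification index and residue degree together with the Zariski--Abhyankar characterization of divisorial valuations ($\Z$-valued with residue transcendence degree $\operatorname{tr.deg}_kF-1$). This buys independence from the choice of a projective model and makes the residue field completely explicit, at the cost of invoking the (standard but nontrivial) fact that such Abhyankar valuations are realized by prime divisors on some normal model --- which is needed because the paper's subsequent computation of $\partial_v^n$ treats $v$ as the valuation of an actual divisor. Both arguments are sound.
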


\noindent In particular, one can compute explicitly the image of $\langle x_1,\dots,x_n\rangle$ \textit{via} the residue map
$\partial_v^n:K_n^M(F)\rightarrow K_{n-1}^M(k(v))$
as $$\partial_v^n(\langle x_1,\dots,x_n\rangle)=\pm v(x_1)\langle \overline{x}_2,\dots,\overline{x}_n\rangle=\pm \langle \overline{x}_2^{v(x_1)}, \overline{x}_3,\dots,\overline{x}_n\rangle.$$

\begin{proof} Fix a normal projective model $X/k$ of $F/k$. Each $x_i$ defines a dominant rational function $x_i:X\dashrightarrow \PP^1_k$ such that $$\underline{x}=(x_1,\dots,x_n):X\dashrightarrow (\PP^1_k)^n$$ is again dominant. Choose any open subscheme $U \subset X$ over which the map  $\underline{x}:U\rightarrow  (\PP^1_k)^n$, $i=1,\dots, n$ is defined. Then, up to replacing $X$ by the normalization of the Zariski closure of the graph of $\underline{x}|_U$ in $X\times (\PP^1_k)^n$, one may assume that the maps $x_i$, $i=1,\dots, n$ and $\underline{x}$ are defined over $X$, and  surjective. Choose an irreducible divisor $D\in Div(X)$ with $v_D(x_1)\not=0$. Then (since $\underline{x}(D)$ has codimension at most $1$ in $(\PP^1_k)^n$) the restriction $\underline{x}|_D:D\rightarrow  (\PP^1_k)^n$ surjects onto $0\times (\PP^1_k)^{n-1} \subset (\PP^1_k)^n$; in particular   the elements $\overline{x}_i:=x_i|_D\in k(D)$, $i=2,\dots, n$ remain algebraically independent over $k$   and $v_D(x_i)=0$,  $i=2,\dots, n$. 
\end{proof}

\subsubsection{}\label{Free} The abelian group $ F^\times/k^\times$ is a free abelian group since it embeds into a free abelian group. This  follows from the exact sequence 
$$0\rightarrow k^\times\rightarrow F^\times\rightarrow Div(X),$$
where $X/k$ is any normal projective model $X/k$ of $F/k$, and the fact that $Div(X)$ is a free abelian group.
\subsection{Type}\label{Type}We  say that a  field $F$ is a function field over a field $k$ (or with field of constants $k$) if $F$ is a finitely generated regular field extension of transcendence degree $\geq 1$ over $k$. We say that a function field $F$ over $k$ is of type 1 (resp. of type 2) if  $k$ is algebraically closed (resp. finite), in which case $k$ is uniquely determined by $F$ (see Lemma \ref{Constants} below).  \\

\noindent   Let $\widetilde{\mathcal{F}}\subset \mathcal{F}$, $\widetilde{\mathcal{F}}_p\subset \mathcal{F}_p$ denote the full subcategories of  function fields of type $1,2$. \\ 

\begin{lemm}\label{Constants}For $F\in \widetilde{\mathcal{F}}$, the type, the multiplicative group $k^\times$ of the field of constants and the characteristic $p$ of $F$  are determined by   $F^\times=K_1^M(F)$ as follows.\\
 $$
 \begin{tabular}[t]{c|c|c|c}
 Type&Torsion subgroup&$k^\times$&$p$\\
 & of $F^\times$&&\\
 \hline\\
 1&Infinite&Divisible&$0$ or unique  $p$ such that\\
 & & subgroup of $F^\times$ &$p\cdot :k^\times \rightarrow k^\times$ is an isomorphism\\
 2&Finite&Torsion&\\
 && subgroup of $F^\times$&Unique   $p$ such that $\log(|k^\times|+1)\in \Z\log(p)$
\end{tabular}$$
 
\end{lemm}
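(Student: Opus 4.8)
The plan is to extract the type, the subgroup $k^\times$, and the characteristic $p$ from the abelian group $F^\times=K_1^M(F)$ alone, using as the sole external input the fact recalled in \S\ref{Free} that $F^\times/k^\times$ is free abelian — in particular it is torsion-free and \emph{reduced}, i.e.\ it contains no nonzero divisible subgroup (projecting a divisible subgroup of a nonzero free abelian group onto a suitable coordinate $\Z$ would yield a nonzero divisible subgroup of $\Z$). Granting this, I would first locate $k^\times$ inside $F^\times$ via the exact sequence $1\to k^\times\to F^\times\to F^\times/k^\times\to 1$: since $F^\times/k^\times$ is torsion-free, the torsion subgroup of $F^\times$ lies in $k^\times$, hence equals the torsion subgroup of $k^\times$; and since $F^\times/k^\times$ is reduced, the (unique) maximal divisible subgroup of $F^\times$ lies in $k^\times$, hence equals the maximal divisible subgroup of $k^\times$. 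It then remains to compute these two invariants of $k^\times$ in each type and to check that they recover everything claimed in the table.

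In type $2$ the field $k$ is finite, so $k^\times$ is a finite cyclic group that coincides with its own torsion subgroup; hence the torsion subgroup of $F^\times$ is finite and equals $k^\times$, and $p$ is the unique prime with $\log(|k^\times|+1)\in\Z\log p$ because $|k^\times|+1=|k|$ is a power of $p$ and of no other prime. In type $1$ the field $k$ is algebraically closed, so $k^\times$ is divisible and therefore equals the maximal divisible subgroup of $F^\times$; its torsion subgroup is $\mu_\infty(k)$, which is infinite since it contains a subgroup of order $\ell$ for each of the infinitely many primes $\ell\neq\operatorname{char}(k)$, so the torsion subgroup of $F^\times$ is infinite. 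To pin down $p$ in type $1$ I would note that for a prime $\ell$ the $\ell$-th power map on $k^\times$ is onto by divisibility with kernel $\mu_\ell(k)$, and $\mu_\ell(k)=\{1\}$ exactly when $\ell=\operatorname{char}(k)>0$ (the polynomial $x^\ell-1$ has $\ell$ distinct roots in $k$ when $\ell\neq\operatorname{char}(k)$, but equals $(x-1)^\ell$ when $\ell=\operatorname{char}(k)=p$); hence the $p$-th power map is an automorphism of $k^\times$ when $\operatorname{char}(k)=p>0$ while no prime power map is an automorphism when $\operatorname{char}(k)=0$, which is precisely the characterization of $p$ in the table.

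Finally, because a field is never both finite and algebraically closed, finiteness of the torsion subgroup of $F^\times$ separates type $2$ from type $1$, so the type — and then, by the computations above, $k^\times$ and $p$ — is determined by $F^\times$. This lemma is essentially a structural bookkeeping step, so I do not expect a genuine obstacle; the one point to be handled with care is that the role of ``$k$ is the field of constants'' must be funnelled entirely through the regularity of $F/k$, i.e.\ through the freeness of $F^\times/k^\times$ of \S\ref{Free} — without regularity one could not even assert that the torsion of $F^\times$ is concentrated in $k^\times$.
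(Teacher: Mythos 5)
The paper states Lemma \ref{Constants} without proof, so there is nothing to compare against line by line; your argument is correct and is precisely the bookkeeping the authors leave implicit, resting on the freeness (hence torsion-freeness and reducedness) of $F^\times/k^\times$ from \S\ref{Free} to locate both the torsion and the maximal divisible subgroup of $F^\times$ inside $k^\times$. All the individual verifications (finiteness versus infiniteness of torsion separating the types, $|k^\times|+1=|k|$ being a prime power in type 2, and the $\ell$-th power map on $k^\times$ being an automorphism exactly for $\ell=\operatorname{char}(k)>0$ in type 1) check out.
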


 \noindent In the following, we sometimes write $F/k\in \widetilde{\mathcal{F}}$ instead of  $F\in  \widetilde{\mathcal{F}}$ implicitly meaning that $k$ is the field of constants of $F$.\\
\subsection{Detecting algebraic dependence}\label{ACBF} For  $F\in   \mathcal{F}_p $ and a prime $\ell\not= p$, let  $D K^M_*(F) \subset K^M_*(F)$ (resp.  $TK^M_*(F) \subset K^M_*(F)$)  denote  the (two-sided) ideal of elements which are  infinitely $\ell$-divisible (resp.  torsion) with respect to the  structure of $\Z$-module
 and write $$
 \begin{tabular}[t]{ll}
 $ \overline{K}^{1,M}_*(F) :=K^M_*(F)/D K_*^M(F)$;\\
 $  \overline{K}^{2,M}_*(F) :=K^M_*(F)/TK_*^M(F)$. \\
\end{tabular}$$ 
Then  $K^M_*\rightarrow   \overline{K}^{i,M}_*$ is a morphism of functors from $\mathcal{F}$ to $\mathcal{A}$, $i=1,2$.\\

\begin{theo}\label{Div}
For $F \in \widetilde{\mathcal{F}}_p $ of type $i$ and every $x_1,\dots, x_n\in F^\times $ consider the following assertions.
\begin{itemize} 
\item [(i)]  $\langle x_1,\dots,x_n\rangle=0$ in $\overline{K}_{n}^{i,M}(F) $;
 
\item [(ii)] the transcendence degree of $k(x_1,\dots, x_n)$ over $k$ is  $\leq n-1$.
\end{itemize}
Then,  (i) $\Rightarrow$ (ii) and  (ii) $\Rightarrow$ (i) if $i=1$ or if $i=2$  and $n\leq 2$.
 
\end{theo}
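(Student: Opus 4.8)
The statement to prove is Theorem \ref{Div}: for $F\in\widetilde{\mathcal F}_p$ of type $i$, with symbols $\langle x_1,\ldots,x_n\rangle$ in $\overline K^{i,M}_n(F)$, we have (i)$\Rightarrow$(ii) always, and (ii)$\Rightarrow$(i) when $i=1$, or $i=2$ and $n\le 2$.

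The direction (ii)$\Rightarrow$(i) is the easier one. Suppose the transcendence degree of $k(x_1,\ldots,x_n)$ over $k$ is $\le n-1$. Then after renumbering, one of the $x_j$, say $x_n$, is algebraic over $k(x_1,\ldots,x_{n-1})$. In the type-$1$ case ($k$ algebraically closed), I would first reduce to the case $n=2$: by multilinearity of the symbol and the fact that algebraic dependence propagates, one can split off the algebraically-dependent pair. More precisely, if $x_n$ is algebraic over $k(x_1,\ldots,x_{n-1})$, pick $x_i$ ($i<n$) with $x_n$ algebraic over $k(x_i)$ if possible — but in general the dependence involves several variables, so instead I'd use the standard trick: over the algebraically closed $k$, a symbol $\langle a,b\rangle$ with $a,b$ algebraically dependent lies in $DK^M_2$. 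The engine here is the $n=2$ Bloch–Kato (Merkurjev–Suslin, \cite{MS}) statement: for a curve over an algebraically closed field, $K_2^M$ of its function field is divisible (indeed the étale cohomology $H^2$ with torsion coefficients vanishes, so all symbols are infinitely divisible mod torsion — and over algebraically closed $k$ there is in fact no torsion issue). So if $x_1, x_2$ are algebraically dependent over algebraically closed $k$, they lie in a subfield $L$ of transcendence degree $1$, $K_2^M(L)$ is divisible, hence $\langle x_1,x_2\rangle\in DK^M_2(F)$, hence zero in $\overline K^{1,M}_2(F)$. For general $n$ with transcendence degree $\le n-1$, Steinitz exchange lets me write the symbol as a sum of symbols each containing an algebraically-dependent length-$2$ sub-symbol, and each such summand dies. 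For type $2$ ($k$ finite) and $n=2$: $x_1,x_2$ lie in a function field $L$ of a curve over a finite field, and the Bass–Tate conjecture (\cite{Tate}, now a theorem in this range) says $K_2^M(L)$ is torsion, so $\langle x_1,x_2\rangle\in TK_2^M(F)$, vanishing in $\overline K^{2,M}_2(F)$.

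**The main obstacle: (i)$\Rightarrow$(ii).** This is the genuinely substantive direction and where the geometric input — Lemma \ref{GeoLem} — is used. Suppose for contradiction that $k(x_1,\ldots,x_n)$ has transcendence degree exactly $n$, i.e. $x_1,\ldots,x_n$ are algebraically independent over $k$. I want to show $\langle x_1,\ldots,x_n\rangle\ne 0$ in $\overline K^{i,M}_n(F)$. The plan is to apply Lemma \ref{GeoLem} repeatedly (or iterate residue maps) to produce a valuation-theoretic invariant that is nonzero and survives modulo divisible/torsion elements. By Lemma \ref{GeoLem}, choose a divisorial valuation $v_1$ of $F$ with $v_1(x_1)\ne 0$, $v_1(x_2)=\cdots=v_1(x_n)=0$, and with $\overline x_2,\ldots,\overline x_n\in k(v_1)$ still algebraically independent over $k$; then $\partial_{v_1}^n\langle x_1,\ldots,x_n\rangle = \pm v_1(x_1)\langle\overline x_2,\ldots,\overline x_n\rangle$. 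Crucially, residue maps are compatible with the functorial quotients $\overline K^{i,M}_*$ — the ideal of infinitely-$\ell$-divisible (resp. torsion) elements maps into the corresponding ideal under $\partial_v$ — so it suffices to show the iterated residue $\langle\overline x_2^{\,v_1(x_1)},\overline x_3,\ldots,\overline x_n\rangle$ is nonzero in $\overline K^{i,M}_{n-1}(k(v_1))$. Iterating, after $n-1$ steps I reach a residue field $K$ which is a function field over $k$ of transcendence degree $\ge 1$ (one must check the construction keeps the residue field a function field over the same $k$, of the right transcendence degree), and the image of the symbol becomes $\langle\overline x_n^{\,e}\rangle = e\cdot\overline x_n\in K_1^M(K)=K^\times$ for some nonzero integer $e$, where $\overline x_n$ is a nonconstant element (transcendental over $k$). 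So the problem reduces to: $e\cdot\overline x_n$ is nonzero in $\overline K^{i,M}_1(K)=K^\times/DK_1^M(K)$ (resp. $K^\times/TK_1^M(K)$).

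**Closing the argument.** For type $1$: $K^\times$ for a function field $K$ over an algebraically closed field $k$ has $DK^M_1(K)$ equal to the divisible subgroup, which by Subsection \ref{Free} (since $K^\times/k^\times$ is free abelian) is exactly $k^\times$; and a transcendental element $\overline x_n$, even raised to a nonzero power $e$, is not in $k^\times$ (a regular extension has $k$ algebraically closed in $K$, so $\overline x_n^{\,e}\in k^\times$ would force $\overline x_n$ algebraic over $k$). Hence $e\cdot\overline x_n\ne 0$ in $\overline K^{1,M}_1(K)$. For type $2$, $n\le 2$: here $n-1=1$, so the iteration is a single residue and $K$ is a function field of transcendence degree $\ge 1$ over the finite field $k$; then $K^\times/TK_1^M(K) = K^\times/(\text{torsion}) = K^\times/k^\times$ (the torsion of $K^\times$ is $k^\times$ since $k$ is algebraically closed in $K$ and finite), which is free abelian and nonzero on $e\cdot\overline x_n$ for transcendental $\overline x_n$ by the same regularity argument. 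The restriction to $n\le 2$ in type $2$ is exactly because for $n\ge 3$ the iterated-residue target $\overline K^{2,M}_{n-1}(K)$ with $K$ a function field over a finite field need not vanish-detect in the clean way above — one would need higher Bass–Tate information that is not available — whereas for type $1$ the divisibility of $K_2^M$ of curves over algebraically closed fields makes every step go through. The only points requiring care are (a) checking the compatibility of $\partial_v$ with the ideals $DK^M_*$, $TK^M_*$ — routine from functoriality plus the explicit residue formula — and (b) bookkeeping that each application of Lemma \ref{GeoLem} preserves "function field over $k$ of the expected transcendence degree," which is built into the lemma's conclusion about algebraic independence of the residue images.
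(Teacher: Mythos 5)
Your direction (i) $\Rightarrow$ (ii) is essentially the paper's argument: an induction on $n$ \textit{via} Lemma \ref{GeoLem} and the residue map, with the base case $n=1$ settled by the freeness of $F^\times/k^\times$ from \ref{Free}; your points (a) and (b) are exactly the routine checks the paper leaves implicit ($\partial_v$ is a group homomorphism, hence carries infinitely $\ell$-divisible elements to infinitely $\ell$-divisible elements and torsion to torsion). The genuine problem is in (ii) $\Rightarrow$ (i) for $i=1$ and $n\geq 3$. You claim that ``Steinitz exchange lets me write the symbol as a sum of symbols each containing an algebraically-dependent length-$2$ sub-symbol.'' This is unjustified and false as stated: take $n=3$ and $x_3=x_1+x_2$ with $x_1,x_2$ algebraically independent over $k$. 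Then $k(x_1,x_2,x_3)$ has transcendence degree $2\leq n-1$, yet no two of $x_1,x_2,x_3$ are algebraically dependent over $k$, and there is no way to split $\langle x_1,x_2,x_3\rangle$ additively along this relation --- a symbol decomposes additively only along \emph{multiplicative} relations among its entries, and algebraic dependence provides none. Steinitz exchange merely reorders a transcendence basis; it produces no such decomposition of the symbol.

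The correct argument (the paper's) needs no decomposition: set $E=k(x_1,\dots,x_n)$, of transcendence degree $d\leq n-1$ over $k$. The symbol $\langle x_1,\dots,x_n\rangle$ lies in the image of the restriction map $\overline{K}_n^{i,M}(E)\rightarrow\overline{K}_n^{i,M}(F)$, so it suffices to prove $\overline{K}_n^{1,M}(E)=0$, i.e.\ that $K_n^M(E)$ is $\ell$-divisible. By Tsen's theorem $E$ has cohomological dimension $\leq d\leq n-1$, so $H^n(E,\mu_\ell^{\otimes n})=0$, and the Bloch--Kato conjecture identifies this group with $K_n^M(E)/\ell$. Your $n=2$ argument is precisely this special case ($L$ of transcendence degree $1$, Merkurjev--Suslin), and your type-$2$ treatment of $n\leq 2$ via Bass--Tate is also correct; it is only the attempted reduction from general $n$ to $n=2$ that fails and must be replaced by the vanishing of the whole group $\overline{K}_n^{1,M}(E)$.
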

\begin{proof} The assertion for $n=1$ follows from \ref{Free}.\\

\noindent   (i) $\Rightarrow$ (ii): We proceed by induction on $n$.  Assume  $k(x_1,\dots, x_n)$ has transcendence degree $n$ over $k$ and choose a place $v$ of $F$ as in   Lemma \ref{GeoLem}. By induction hypothesis
 $\partial_{v }^n(\langle x_1,\dots,x_n\rangle)\not= 0$ in $\overline{K}_{ n-1}^{i,M}(k(v))$ hence, \textit{a fortiori}, $\langle x_1,\dots,x_n\rangle\not=0$ in $\overline{K}_{ n}^{i,M}(F) $. \\

\noindent  (ii) $\Rightarrow$ (i): Assume $E=k(x_1,\dots, x_n) $ has transcendence degree~$d\leq n-1$ over $k$. Since $\langle x_1,\dots,x_n\rangle$ lies in the image of   the restriction map $\overline{K}_n^{i,M}(E) \rightarrow \overline{K}_n^{i,M}(F) $, it is enough to show that $\overline{K}_n^{i,M}(E)=0$. If $F$ is of type 1, this follows from Tsen's theorem \cite{Lang-Tsen}, which ensures that  
 $H^n(E, \Z_\ell(n))=0$ and  from the Bloch-Kato conjecture \cite{Voe, Voe1}.  If $F$ is of type 2, this follows from  the $n=2$ case of the Bass-Tate conjecture \cite[Thm. 1]{Tate}.
\end{proof}

\noindent Let $F,F'\in\widetilde{\mathcal{F}}_p$ of the same type $i$. For a  morphism  $\overline{\psi}_*:\overline{K}^{i,M}_*(F)\rightarrow \overline{K}^{i,M}_*(F')$   of $\Z_{\geq 0}$-graded rings and integer $n\geq 1$, consider the assertion

\begin{center}
$(\circ,n)$ For every $x_1,\dots, x_n\in F$, $\overline{\psi}_1(\overline{k(x_1,\dots, x_n)^F}^\times/k^\times)\,\circ\, \overline{k'(\psi_1(x_1),\dots, \psi_1(x_n))^{F'}}/k^{'\times}$ \\
\end{center}

\noindent where $\psi_1:F^\times\rightarrow F^{'\times}$ denotes any set-theoretic lift of $\overline{\psi}_1$ and $\circ$ means the symbol $=$ or $\subset$.

 \begin{coro}\label{DivCoro} Then Assertion  $(\subset,n)$ holds for every $n$ if $i=1$ and for $n\leq 2$ if $i=2$. If  $\overline{\psi}_*$ is an isomorphism  of $\Z_{\geq 0}$-graded rings, Assertion $(=,n)$ holds for every $n$ if $i=1$ and for $n\leq 2$ if $i=2$.
 \end{coro}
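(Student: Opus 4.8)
The plan is to reduce the statement to Theorem~\ref{Div} and to the detection of algebraic dependence of \emph{pairs}, which by Theorem~\ref{Div} ($m\le2$) is available on $\overline K^{2,M}_2$ for both types. First, by Lemma~\ref{Constants} one has $DK^M_1(F)=TK^M_1(F)=k^\times$ (and the same for $F'$), so $\overline K^{i,M}_1(F)=F^\times/k^\times$ and $\overline\psi_1$ is the group morphism of the statement; and since $K^M_*$ is generated in degree one while $\overline K^{i,M}_*$ is a quotient ring, $\overline\psi_*$ satisfies $\overline\psi_m(\langle a_1,\dots,a_m\rangle)=\langle\psi_1(a_1),\dots,\psi_1(a_m)\rangle$, i.e.\ it carries vanishing symbols to vanishing symbols. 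Unwinding $(\subset,n)$: it asserts that every $y\in F^\times$ algebraic over $k(x_1,\dots,x_n)$ has $\psi_1(y)$ algebraic over $k'(\psi_1(x_1),\dots,\psi_1(x_n))$; after re-ordering and replacing $k(x_1,\dots,x_n)$ by a transcendence subbasis $k(x_1,\dots,x_d)$ (same relative algebraic closure, and smaller target field on the right), it suffices to treat $n=d$ with $x_1,\dots,x_d$ algebraically independent over $k$, the case $\overline y=1$ being trivial. Assertion $(=,n)$ then follows from $(\subset,n)$ applied to $\overline\psi_*$ and to $\overline\psi_*^{-1}$ together with bijectivity of $\overline\psi_1$; note that for an isomorphism $\overline\psi_1$ moreover preserves algebraic dependence of pairs, by Theorem~\ref{Div} ($m\le2$) in both directions.

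In the range where Theorem~\ref{Div} applies with $d+1$ symbols --- that is $i=1$ and any $d$, or $i=2$ and $d\le1$ --- the argument is immediate: $k(x_1,\dots,x_d,y)$ has transcendence degree $d=(d+1)-1$, so Theorem~\ref{Div}, (ii)$\Rightarrow$(i), gives $\langle x_1,\dots,x_d,y\rangle=0$ in $\overline K^{i,M}_{d+1}(F)$; applying $\overline\psi_{d+1}$ and then Theorem~\ref{Div}, (i)$\Rightarrow$(ii), one finds that $k'(\psi_1(x_1),\dots,\psi_1(x_d),\psi_1(y))$ has transcendence degree $\le d$, and since $\psi_1(x_1),\dots,\psi_1(x_d)$ stay algebraically independent over $k'$ (automatic for an isomorphism, as for $i=1$ independence of $d$-tuples is itself detected by $\overline K^{1,M}_d$ and $\overline\psi_d$ is bijective), the element $\psi_1(y)$ is algebraic over $k'(\psi_1(x_1),\dots,\psi_1(x_d))$. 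This disposes of $(\subset,n)$ for $i=1$ and all $n$, and for $i=2$ and $n\le2$ whenever $d\le1$.

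The hard --- and only genuinely new --- case is $i=2$, $n=2$, $d=2$, where threefold symbols are unavailable (only the degree-two Bass--Tate conjecture is known) and the relative algebraic closure of a $2$-generated subfield is \emph{not} recoverable from algebraic dependence of pairs alone (witness $x_1+x_2$). Here I would reconstruct it additively. Writing $L':=k'(\psi_1(x_1),\psi_1(x_2))$, and noting $\psi_1(x_1),\psi_1(x_2)$ are algebraically independent over $k'$ in the isomorphism case, I would first handle $y\in k(x_1,x_2)$ by induction on the complexity of $y$ as a rational function of $x_1,x_2$: products and inverses reduce because $\overline\psi_1$ is multiplicative, and for a sum $a+b$ the Steinberg relation $\langle a/(a+b),\,b/(a+b)\rangle=0$ expands --- modulo the negligible term $\langle a+b,-1\rangle$, killed since $-1\in k^\times$ --- to $\langle a,b\rangle=\langle a,a+b\rangle+\langle a+b,b\rangle$ in $\overline K^{2,M}_2(F)$, hence $\langle\psi_1(a),\psi_1(b)\rangle=\langle\psi_1(a),\psi_1(a+b)\rangle+\langle\psi_1(a+b),\psi_1(b)\rangle$ in $\overline K^{2,M}_2(F')$; if $a\equiv b$ one concludes by $(\subset,1)$, otherwise $\psi_1(a),\psi_1(b)$ are independent over $k'$ and algebraic over $L'$ by the inductive hypothesis, and were $\psi_1(a+b)$ transcendental over $L'$ one could pick, via Lemma~\ref{GeoLem} applied to the three elements $\psi_1(a+b),\psi_1(a),\psi_1(b)$ of $F'$, a divisorial valuation $v$ of $F'$ trivial on $\psi_1(a),\psi_1(b)$, nontrivial on $\psi_1(a+b)$, with $\overline{\psi_1(a)},\overline{\psi_1(b)}$ independent in $k(v)$; the residue $\partial_v$ kills the left-hand side and forces a nonzero power of $\overline{\psi_1(a)}/\overline{\psi_1(b)}$ to be torsion in $k(v)^\times$, which is absurd. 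To pass from $k(x_1,x_2)$ to all of $\overline{k(x_1,x_2)^F}$ I would inspect the minimal polynomial of $y$ over $k(x_1,x_2)$: if its coefficients generate a subfield of transcendence degree $\le1$, L\"uroth's theorem writes it as $k(v)$ with $v\in k(x_1,x_2)$, so $y\equiv v$ and $(\subset,1)$ plus the rational case apply; if they generate a subfield of transcendence degree $2$, then (re-ordering) $\{x_1,y\}$ is again a transcendence basis of $\overline{k(x_1,x_2)^F}$ and $x_2$ is finitely algebraic over $k(x_1,y)$, and applying the rational case to the pair $(x_1,y)$ and comparing transcendence degrees (using independence of $\psi_1(x_1),\psi_1(x_2)$) gives the claim. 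Then $(=,2)$ follows as above.

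I expect the additive reconstruction of the last paragraph to be the real obstacle: since Bass--Tate is only known in degree $2$ over finite fields one cannot encode the closure of a $2$-generated subfield in a single symbol and must build it up addition by addition, controlling the resulting ambiguity with Steinberg relations and residues (Lemma~\ref{GeoLem}). A secondary point, for morphisms that are not isomorphisms, is to ensure the transcendence-degree comparisons are not spoiled by a collapse of transcendence under $\overline\psi_1$; this is patched with the ring structure --- for instance, if $\psi_1(x_1),\psi_1(x_2)$ become dependent, the relation $\langle\psi_1(x_1)/\psi_1(x_2),\,\psi_1(a+b)\rangle=0$ still pins $\psi_1(a+b)$ down via Theorem~\ref{Div} in degree $2$.
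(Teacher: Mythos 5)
Your second paragraph is precisely the argument the paper has in mind (Corollary~\ref{DivCoro} is stated without a separate proof, as a direct consequence of Theorem~\ref{Div}): identify $\overline{K}^{i,M}_1(F)$ with $F^\times/k^\times$ via Lemma~\ref{Constants}, reduce to an algebraically independent subfamily, encode ``$y$ algebraic over $k(x_1,\dots,x_d)$'' as the vanishing of the length-$(d+1)$ symbol $\langle x_1,\dots,x_d,y\rangle$, push it through $\overline{\psi}_{d+1}$, and conclude by (i)$\Rightarrow$(ii); deducing $(=,n)$ from $(\subset,n)$ for $\overline{\psi}_*$ and $\overline{\psi}_*^{-1}$ is also as intended. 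You are right that this only reaches $(\subset,n)$ when $(d+1)$-fold symbols are controlled, hence only $n\le 1$ when $i=2$ and $d=2$ is out of reach, and right that preservation of independence is only automatic for isomorphisms. Note, however, that everything downstream in the paper uses the corollary solely through ``$\overline{\psi}_1$ preserves algebraic dependence'', i.e.\ through $(=,1)$, so the cases you labour over are not needed for Corollary~\ref{FullMT}.

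Two steps of your extension do not hold up. First, in passing from $k(x_1,x_2)$ to $\overline{k(x_1,x_2)^F}$ when the minimal-polynomial coefficients of $y$ generate a transcendence-degree-$2$ subfield, you ``apply the rational case to the pair $(x_1,y)$'' to place $\psi_1(x_2)$ in $\overline{k'(\psi_1(x_1),\psi_1(y))^{F'}}$; but $x_2$ is only \emph{algebraic over} $k(x_1,y)$, not an element of it, so this step invokes exactly the statement being proved for the new pair $(x_1,y)$, and the recursion has no visible termination. (An element such as $y=\sqrt{x_1}+\sqrt{x_2}$, in odd characteristic, is algebraic over no $k(z)$ with $z\in k(x_1,x_2)$ nonconstant, and its minimal polynomial has coefficients of transcendence degree $2$, so neither of your two exits applies cleanly.) Second, the closing ``patch'' for non-injective morphisms rests on a relation you do not have: if $\psi_1(x_1),\psi_1(x_2)$ become dependent, there is no identity $\langle\psi_1(x_1)/\psi_1(x_2),\psi_1(a+b)\rangle=0$ to exploit, because $x_1/x_2$ and $a+b$ are in general algebraically independent upstream, so $\langle x_1/x_2,a+b\rangle\neq 0$ and nothing forces its image to vanish. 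By contrast, your Steinberg-plus-residue device for sums (expanding $\langle a/(a+b),b/(a+b)\rangle=0$, then applying $\partial_v$ for a valuation supplied by Lemma~\ref{GeoLem} and using that $k(v)^\times$ has no nonconstant torsion) is correct and is a genuinely nice idea; as it stands it yields $(\subset,2)$ for $i=2$ only for elements of $k(x_1,x_2)$ itself and only in the isomorphism case.
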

 \begin{rema}\textit{}
 \begin{itemize}[leftmargin=* ,parsep=0cm,itemsep=0cm,topsep=0cm]
 \item When $F$ is of type 2 the implication (ii) $\Rightarrow$ (i) of Theorem \ref{Div} and the assertions $(\subset,n)$, $(=,n)$ of Corollary \ref{DivCoro} for every $n$ are predicted by the Bass-Tate conjecture in positive characteristic \cite[Question, p.390]{BassTate}. See also \cite{Kahn} for the relation between the Bass-Tate conjecture in positive characteristic and classical motivic conjectures. 
 \item For our applications, we only need the $n\leq 2$ cases of Corollary \ref{DivCoro}. In particular, for function fields of type 1, we only need the $n=2$ case of the Bloch-Kato conjecture, an earlier theorem of Merkurjev and Suslin \cite{MS}.
  \end{itemize}
\end{rema}
\subsection{Reconstructing function fields} The $n\leq 2$ case of Corollary \ref{DivCoro} implies that for $i=1,2$ and  for every $F,F'\in\widetilde{\mathcal{F}}_p$ of type $i$ and isomorphism  $\overline{\psi}_*:\overline{K}^{i,M}_*(F)\rightarrow \overline{K}^{i,M}_*(F')$ 
 of $\Z_{\geq 0}$-graded rings the induced isomorphism of multiplicative groups  $\overline{\psi}_1$ preserves algebraic dependence. This implies the following. For $i=1,2$, let  
$$
\hbox{\rm Isom}(\overline{K}_*^{i,M}(F),\overline{K}_*^M(F'))  
$$ 

denote the set of isomorphisms of $\Z$-graded rings
$ 
 \overline{K}_*^{i,M}(F)\tilde{\rightarrow} \overline{K}_*^{i,M}(F') 
 $
 and $$
\overline{\hbox{\rm Isom}}(\overline{K}_*^{i,M}(F),\overline{K}_*^{i,M}(F')) 
$$ 
their  quotients by the natural action of $\Z/2$. 

\begin{coro}\label{FullMT}\textit{}Let $F,F' \in\mathcal{\tilde{F}}$.  Then
\begin{itemize}
\item[(i)]  $  \hbox{\rm Isom}(K_*^M(F),K_*^M(F'))\not=\emptyset $ only if $F,F' $ are of the same type $i$ and characteristic $p$. 
\item[(ii)]    If  $F,F'\in\mathcal{\tilde{F}}_p$ are   of the same type $i$ and have transcendence degree $\geq 2$ over their fields of constants   then  
  the natural functorial map $$\hbox{\rm Isom}(F,F')\rightarrow \overline{\hbox{\rm Isom}}(\overline{K}_*^{i,M}(F),\overline{K}_*^{i,M}(F')) $$
is bijective.
 \end{itemize}
\end{coro}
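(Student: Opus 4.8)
The plan is to deduce Corollary \ref{FullMT} from Theorem \ref{TAD} together with Corollary \ref{DivCoro} (more precisely its $n\le 2$ part) and Lemma \ref{Constants}. First I would dispose of (i): if $\overline{\phi}_*\colon K_*^M(F)\tilde{\rightarrow}K_*^M(F')$ is an isomorphism of graded rings, then in particular it restricts to a group isomorphism $K_1^M(F)=F^\times\tilde{\rightarrow}F'^{\,\times}=K_1^M(F')$, so by Lemma \ref{Constants} the invariants ``torsion subgroup of $F^\times$'' (finite vs.\ infinite) and the characteristic $p$ are preserved; hence $F,F'$ have the same type $i$ and the same characteristic $p$. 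This also shows that the map in (ii) lands in the right place once we know $K_*^M\to\overline{K}_*^{i,M}$ is a functor (stated in Subsection \ref{ACBF}): an element of $\hbox{\rm Isom}(F,F')$ maps to an isomorphism of graded rings $K_*^M(F)\tilde\to K_*^M(F')$, hence to one of $\overline{K}_*^{i,M}(F)\tilde\to\overline{K}_*^{i,M}(F')$, well defined up to the $\Z/2$-action; functoriality of the whole assignment is immediate.

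For (ii), the core is to produce a two-sided inverse to $\hbox{\rm Isom}(F,F')\to\overline{\hbox{\rm Isom}}(\overline{K}_*^{i,M}(F),\overline{K}_*^{i,M}(F'))$. Given a class of isomorphisms $\overline{\psi}_*\colon\overline{K}_*^{i,M}(F)\tilde\to\overline{K}_*^{i,M}(F')$, I would look at the degree-one part $\overline{\psi}_1\colon F^\times/k^\times\tilde\to F'^{\,\times}/k'^{\,\times}$. Here one must first check $\overline{\psi}_1$ genuinely descends to the quotients by $k^\times$, $k'^{\,\times}$: by Lemma \ref{Constants}, $k^\times$ is exactly the torsion (type $2$) or divisible (type $1$) subgroup of $F^\times=K_1^M(F)$, and both are intrinsic to the group, hence preserved by $\overline{\psi}_1$ (note $DK_1^M(F)$, $TK_1^M(F)$ are precisely these subgroups for $i=1,2$ respectively, so passing to $\overline{K}_1^{i,M}$ already kills them). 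Then the $n\le2$ case of Corollary \ref{DivCoro}, applied with $\circ$ equal to ``$=$'' since $\overline{\psi}_*$ is an isomorphism, says exactly that $\overline{\psi}_1$ preserves algebraic dependence in the sense of the definition in the introduction (via the reformulation $\overline{k(x)^F}^\times/k^\times=\overline{\psi}_1^{-1}(\overline{k'(\psi_1(x))^{F'}}/k'^{\,\times})$, which is the $n=1$ case). So $\overline{\psi}_1$ determines a well-defined class in $\overline{\hbox{\rm Isom}}^{\equiv}(F^\times/k^\times,F'^{\,\times}/k'^{\,\times})$, compatibly with the $\Z/2$-actions, and Theorem \ref{TAD} hands back a unique class of field isomorphisms $F/k\tilde\to F'/k'$ mapping to it.

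It remains to check the two composites are the identity. The composite $\hbox{\rm Isom}(F,F')\to\overline{\hbox{\rm Isom}}(\overline{K}_*^{i,M})\to\hbox{\rm Isom}(F/k,F'/k')\hookrightarrow\hbox{\rm Isom}(F,F')$ is the identity because a field isomorphism $\phi$ induces on $K_1^M$ the map $\phi$ itself, whose class in degree one already recovers $\phi$ via Theorem \ref{TAD} (the square in Theorem \ref{TAD} is the identity on the nose by construction), and because any isomorphism of Milnor $K$-rings restricting to a field isomorphism on $K_1$ automatically carries $k$ to $k'$ by Lemma \ref{Constants}, so it lies in $\hbox{\rm Isom}(F/k,F'/k')$; here I also use that $\hbox{\rm Isom}(F,F')=\hbox{\rm Isom}(F/k,F'/k')$ under our hypotheses, again by Lemma \ref{Constants}. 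For the other composite, given $\overline{\psi}_*$ one obtains $\phi\colon F\tilde\to F'$ with $\phi=\overline{\psi}_1$ on $F^\times/k^\times$; one must see that $\phi$ induces $\overline{\psi}_*$ in \emph{all} degrees of $\overline{K}_*^{i,M}$, not just degree one. This follows since $\overline{K}_*^{i,M}(F)$ is generated in degree one over $\overline{K}_0^{i,M}$ as an algebra and the symbol relations are preserved, so a graded-ring isomorphism is determined by its degree-one part; hence $\phi_*=\overline{\psi}_*$. I expect this last point --- that a graded-ring automorphism of a Milnor-type $K$-ring is pinned down by degree one --- together with the bookkeeping of the $\Z/2$-quotients, to be the only genuinely delicate part, but it is formal once one invokes that $\overline{K}_*^{i,M}$ is a quotient of $K_*^M$ by a graded ideal and $K_*^M$ is generated in degree $1$; everything else is a direct assembly of Theorem \ref{TAD}, Corollary \ref{DivCoro}, and Lemma \ref{Constants}.
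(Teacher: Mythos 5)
Your argument is correct and follows the same route as the paper, which packages the identical ingredients --- Lemma \ref{Constants} for (i), the $n\le 2$ case of Corollary \ref{DivCoro} to see that $\overline{\psi}_1$ preserves algebraic dependence, Theorem \ref{TAD} for bijectivity, and the fact that $\overline{K}_*^{i,M}$ is generated in degree $\le 1$ so that restriction to $K_1$ is injective on isomorphisms --- into a single commutative diagram rather than an explicit two-sided inverse. Your extra checks (that $\overline{K}_1^{i,M}(F)=F^\times/k^\times$, and the compatibility with the $\Z/2$-actions) are exactly the points the paper leaves implicit.
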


\begin{proof}  Part (i) follows from Lemma \ref{Constants}. For Part (ii), 
one has a canonical  commutative diagram 
$$  \xymatrix{&\overline{\hbox{\rm Isom}}^\equiv(F^\times/k^\times,F^{'\times}/k^{'\times})\ar@{^{(}->}[dr]&\\
\hbox{\rm Isom}(F,F')\ar[dr]\ar[ur]^{(2)}&& \overline{\hbox{\rm Isom}}(F^\times/k^\times,F^{'\times}/k^{'\times})\\
&\overline{\hbox{\rm Isom}}(\overline{K}_*^{i,M}(F),\overline{K}_*^{i,M}(F'))\ar@{^{(}->}[ur]_{\overline{K}_1^{i,M}}\ar[uu]^{(1)}&}$$
where the factorization (1) follows from Corollary \ref{DivCoro}. So the conclusion follows from the fact that the map (2) is bijective by Theorem \ref{TAD}.
\end{proof}
 
 \begin{coro}\label{Auto}\textit{}Let $F  \in\mathcal{\tilde{F}}$ of type $i$.  Then one has the following group isomorphisms
 $$\hbox{\rm Aut}(F)\tilde{\rightarrow}\overline{\hbox{\rm Aut}}^\equiv(F^\times/k^\times)\tilde{\rightarrow}\overline{\hbox{\rm Aut}}(\overline{K}_*^{i,M}(F))$$
 
\end{coro}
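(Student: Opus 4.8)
The plan is to obtain the statement by specializing Corollary~\ref{FullMT}(ii) (and the bijection of Theorem~\ref{TAD}) to the case $F'=F$, $k'=k$, the only extra point being that all the maps involved are compatible with composition. First I would recall that, since $F$ is of type $i$, its field of constants $k\subset F$ is canonically determined by $F$ (Lemma~\ref{Constants}); hence $\mathrm{Aut}(F)=\mathrm{Isom}(F/k,F/k)$, every $\phi\in\mathrm{Aut}(F)$ induces a group automorphism $\overline{\phi}$ of $F^\times/k^\times$ preserving algebraic dependence as well as a graded ring automorphism $\phi_*$ of $K_*^M(F)$, and hence, by functoriality of $\overline{K}_*^{i,M}$, a graded ring automorphism of $\overline{K}_*^{i,M}(F)$. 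I would then assemble the three natural maps
$$\mathrm{Aut}(F)\to\mathrm{Aut}^\equiv(F^\times/k^\times),\qquad \mathrm{Aut}(\overline{K}_*^{i,M}(F))\to\mathrm{Aut}^\equiv(F^\times/k^\times),\qquad \mathrm{Aut}(F)\to\mathrm{Aut}(\overline{K}_*^{i,M}(F)),$$
namely $\phi\mapsto\overline{\phi}$, restriction to degree $1$ (which takes values in $\mathrm{Aut}^\equiv$ by Corollary~\ref{DivCoro}, using that $\overline{K}_1^{i,M}(F)=F^\times/k^\times$ because the maximal divisible, resp.\ the torsion, subgroup of $F^\times$ equals $k^\times$ when $F$ is of type $1$, resp.\ type $2$), and $\phi\mapsto\phi_*$; these are group homomorphisms and fit into a commutative triangle.

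Next I would verify that the first two maps are isomorphisms, the third following by commutativity. For the first: it is injective because a $\phi\in\mathrm{Aut}(F)$ with $\phi(x)\in k^\times x$ for all $x\in F^\times$ must be the identity — writing $\phi(x)=c_x x$ with $c_x\in k^\times$, comparing $\phi(x^2)=\phi(x)^2$ and $\phi(x+1)=\phi(x)+1$ and using that $1$ and $x$ are $k$-linearly independent for $x\notin k$ forces $c_x=1$ — and it is surjective because, by Theorem~\ref{TAD} with $F'=F$, every element of $\mathrm{Aut}^\equiv(F^\times/k^\times)$ (or its inverse, hence itself, $\mathrm{Aut}(F)$ being a group) is induced by a field automorphism. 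For the second: it is injective because $\overline{K}_*^{i,M}(F)$ is generated in degree $1$ over $\overline{K}_0^{i,M}(F)=\Z$, so a graded ring automorphism is determined by its degree-$1$ part; and it is surjective because a field automorphism of $F$ inducing a prescribed $\overline{\psi}\in\mathrm{Aut}^\equiv(F^\times/k^\times)$ induces a graded ring automorphism of $\overline{K}_*^{i,M}(F)$ whose degree-$1$ part is $\overline{\psi}$. Finally, in the case $F'=F$ the $\Z/2$-quotients appearing in the statement of Corollary~\ref{FullMT} are canonically identified with the automorphism groups themselves, compatibly with the maps above, so one recovers the displayed group isomorphisms.

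Since all the substance has already been established in Theorem~\ref{TAD} and Corollary~\ref{FullMT}, I do not expect any serious obstacle: the work here is essentially bookkeeping, the only points needing a (short) argument being the injectivity of $\mathrm{Aut}(F)\to\mathrm{Aut}^\equiv(F^\times/k^\times)$, the identification $\overline{K}_1^{i,M}(F)=F^\times/k^\times$, and the fact that $\overline{K}_*^{i,M}(F)$ is generated in degree $1$. The one slightly delicate conceptual point is to keep track of how the $\Z/2$-quotients degenerate when source and target coincide, so that the phrase \emph{group isomorphism} is literally meaningful.
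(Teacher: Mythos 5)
Your overall route --- specialize Theorem~\ref{TAD} and Corollary~\ref{FullMT}(ii) to $F'=F$, $k'=k$ (using Lemma~\ref{Constants} to see that $\hbox{\rm Aut}(F)=\hbox{\rm Isom}(F/k,F/k)$) and check compatibility with composition --- is exactly what the paper intends; it gives no separate proof of Corollary~\ref{Auto}. Your auxiliary points (the identification $\overline{K}_1^{i,M}(F)=F^\times/k^\times$ via \ref{Free} and Lemma~\ref{Constants}, injectivity of $\hbox{\rm Aut}(\overline{K}_*^{i,M}(F))\to\hbox{\rm Aut}(F^\times/k^\times)$ from generation in degree $1$, and the direct injectivity argument for $\hbox{\rm Aut}(F)\to\hbox{\rm Aut}^{\equiv}(F^\times/k^\times)$) are all correct.

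However, there is a genuine error in your treatment of the $\Z/2$-action, which you yourself flag as the delicate point. The action $\overline{\psi}\mapsto\overline{\psi}^{\,-1}$ is \emph{pointwise} inversion, $\overline{x}\mapsto\overline{\psi}(\overline{x})^{-1}$ (i.e.\ post-composition with the inversion automorphism $\iota$ of the abelian group $F'^{\,\times}/k'^{\,\times}$), not the inverse map: the inverse map would not land in $\hbox{\rm Isom}^{\equiv}(F^\times/k^\times,F'^{\,\times}/k'^{\,\times})$ when $F\neq F'$, and the proof of Theorem~\ref{TAD} ends precisely with $\overline{\psi}(\overline{x})=\overline{\phi}(\overline{x})^{\pm 1}$. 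Consequently two of your claims fail. First, $\hbox{\rm Aut}(F)\to\hbox{\rm Aut}^{\equiv}(F^\times/k^\times)$ is \emph{not} surjective: $\iota$ itself preserves algebraic dependence but is induced by no field automorphism (a $\phi$ with $\phi(x)\in k^\times x^{-1}$ for all $x$ cannot be additive), so the image of $\hbox{\rm Aut}(F)$ has index $2$. Second, the quotient $\overline{\hbox{\rm Aut}}^{\equiv}(F^\times/k^\times)$ does not ``degenerate'' to $\hbox{\rm Aut}^{\equiv}(F^\times/k^\times)$ when $F'=F$; it is the genuine quotient by the central order-$2$ subgroup $\langle\iota\rangle$ (central because $\iota$ commutes with every group automorphism of an abelian group), and it is this centrality that makes the quotient a group and the displayed maps group isomorphisms, with surjectivity onto the quotient --- not onto $\hbox{\rm Aut}^{\equiv}$ itself --- supplied by Theorem~\ref{TAD}. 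The same remark applies on the $K$-theoretic side, where $\iota$ corresponds to the central graded automorphism acting by $(-1)^n$ in degree $n$. With the $\Z/2$-action read correctly, the rest of your argument goes through unchanged.
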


  \begin{rema} For convenience, we stated Corollary \ref{FullMT} with the whole rings $ \overline{K}_*^{i,M}(F)$ but,  it is actually enough to consider the datum of $ \overline{K}_{\leq 2}^{i,M}(F)$ that is   the canonical pairing
  $$\langle -,-\rangle: \overline{K}_1^{i,M}(F)\otimes \overline{K}_1^{i,M}(F) \rightarrow \overline{K}_2^{i,M}(F),$$
(for $i=1$, one can even replace $\overline{K}_2^{1,M}(F)$  with its $\ell$-adic completion $ H^2(F,\Z_\ell(2))$). 
\end{rema}
 
  \begin{rema}From Lemma \ref{Constants}, one can reconstruct the field of constants  $k$ of a function field $F$ in $ \mathcal{\tilde{F}}$ from $K_1^M(F)$. In general, one may ask for a relative version of Corollary \ref{FullMT} that is replacing the functor $K_*^M(-)$ with  the functor   sending a finitely generated field extension $F$ of a perfect field $k$ to the morphism $K_*^M(k)\rightarrow K_*^M(F)$.
\end{rema}
\noindent The remaining part of the paper is devoted to the proof of Theorem \ref{TAD}.

\

\section{Proof of Theorem \ref{TAD}}\label{RPL1}
\noindent We refer to Subsection \ref{StrategyIntro} for a description of the strategy of the proof.  

\subsection{Notation}\label{Notation}\textit{}  \\
  
\noindent Let $k$ be a perfect field of characteristic $p\geq 0$ and let $F/k$ be a finitely generated regular  field extension of transcendence degree  $\geq 2$. \\

 \noindent For   every  subfield $k\subset E \subset  F$ and $\overline{x}\not= \overline{y} \in F^\times/E^\times$, write\\
  $$\frak{l}_{E}(\overline{x},\overline{y})=(Ex \oplus Ey)^\times/E^{\times}\subset F^\times/E^{\times}$$
  for  the corresponding  line in $F^\times/E^{\times}$.
 
 \begin{defi} We say that $\overline{x},\overline{y}\in F^\times/k^\times$ are {\it $p$-multiplicatively dependent} and write   $\overline{x}\sim_p  \overline{y}$ if  $$
 \overline{x}^{\Z}\cap \overline{y}^{\Z}\not=1$$
 in $F^{ \times}/p$.
 The relation $\sim_p$ is an equivalence relation on $F^\times/k^\times$.
\end{defi}
 
\
  
 \noindent For $\overline{x}_1,\overline{x}_2,\overline{y}_1,\overline{y}_2\in F^\times/k^\times$ and some (equivalently, every) lifts $x_1,x_2, y_1,y_2\in F^\times$, write
 $$\mathcal{I}(\overline{x}_1,\overline{x}_2,\overline{y}_1,\overline{y}_2):= (\overline{k(x_1/x_2)^F}^\times\cdot  x_2)\bigcap (\overline{k(y_1/y_2)^F}^\times\cdot y_2)$$
 and  
 
$$  \frak{I}(\overline{x}_1,\overline{x}_2)=\bigcup_{y_1,y_2}  \mathcal{I}(\overline{x}_1,\overline{x}_2,\overline{y}_1,\overline{y}_2)
$$
where the union is over all $y_i\in \overline{k(x_i)^F}^\times$, $x_i\not\sim_p y_i $, $i=1,2$. \\

    \

\noindent Let $\overline{\mathcal{I}}(\overline{x}_1,\overline{x}_2,\overline{y}_1,\overline{y}_2)$,    $ \overline{\frak{I}}(\overline{x}_1,\overline{x}_2)$ denote the images of $ \mathcal{I}(\overline{x}_1,\overline{x}_2,\overline{y}_1,\overline{y}_2)$,  $ \frak{I}(\overline{x}_1,\overline{x}_2)$ in $F^\times/k^\times$ respectively.\\
 
\noindent For every $\overline{I}\in \overline{\frak{I}}(\overline{x}_1,\overline{x}_2)$, let  $\frak{l}^\circ(\overline{I},\overline{x}_1)$  denote the   set of all   $y_1\in \overline{k(x_1)^F}$, $x_1\not\sim_py_1$   such that for some (equivalently, every) lift $I\in F^\times$ of $\overline{I}\in F^\times/k^\times$, one has $I\in \overline{k(y_1/y_2)^{F}}^\times\cdot y_2$ for some $y_2\in \overline{k(x_2)^F}$, $x_2\not\sim_py_2$. \\

\noindent Similarly, let    $\frak{l}^\circ(\overline{I},\overline{x}_2)$ denote the  set of all  $y_2\in \overline{k(x_2)^F}$,   $x_2\not\sim_py_2$  such that for some (equivalently, every) lift $I\in F^\times$ of $\overline{I}\in F^\times/k^\times$, one has $I\in \overline{k(y_1/y_2)^{F}}^\times\cdot y_2$  for some $y_1\in \overline{k(x_1)^F}$, $x_1\not\sim_py_1$.\\

\noindent Set also $$\frak{l}(\overline{I},\overline{x}_i)=\frak{l}^\circ(\overline{I},\overline{x}_i)\cup \lbrace x_i\rbrace,\; i=1,2.$$

\noindent Let $\overline{\frak{l}}^\circ(\overline{I},\overline{x}_i)$, $\overline{\frak{l}}(\overline{I},\overline{x}_i)$ denote the images of $\frak{l}^\circ(\overline{I},\overline{x}_i)$, $\frak{l}(\overline{I},\overline{x}_i)$ in $F^\times/k^\times$ respectively.\\

  \subsection{Recollection on  differentials}

\begin{lemm}
\label{DiffTr}
For $x_1,\dots, x_n\in F$, the following are equivalent
\begin{itemize}[leftmargin=0.7cm ,parsep=0cm,itemsep=0cm,topsep=0cm]
\item $x_1,\dots, x_n\in F$ is a separating  transcendence basis for $F/k$, i.e.  $F/k(x_1,\dots, x_n)$ is a finite separable field extension;
\item $dx_1,\dots, dx_n\in \Omega^1_{F|k}$ is an $F$-basis of $\Omega^1_{F|k}$.
\end{itemize}
If $p>0$, these are also equivalent to
\begin{itemize}[leftmargin=0.7cm ,parsep=0cm,itemsep=0cm,topsep=0cm]
\item $x_1,\dots, x_n\in F$ is a  $p$-basis of $F/k$ that is a $\mathbb{F}_p$-basis of $F/F^p$.\\
\end{itemize}
\end{lemm}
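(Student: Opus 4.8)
\emph{Proof plan.} This lemma is a standard compendium of facts about K\"ahler differentials, which I would assemble from four ingredients, all available under the standing hypotheses ($F/k$ finitely generated and $k$ perfect, so $F/k$ is separable, hence separably generated): (a) $\Omega^1_{L|k}$ is free over $L=k(x_1,\dots,x_n)$ with basis $dx_1,\dots,dx_n$; (b) the right-exact sequence $\Omega^1_{L|k}\otimes_LF\to\Omega^1_{F|k}\to\Omega^1_{F|L}\to 0$ attached to a tower $k\subset L\subset F$; (c) for $F/L$ finite, $\Omega^1_{F|L}=0$ if and only if $F/L$ is separable; (d) $\dim_F\Omega^1_{F|k}=\operatorname{trdeg}(F/k)$, and, when $p>0$, $[F:F^p]=p^{\operatorname{trdeg}(F/k)}$. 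Observe first that each of the three conditions in the lemma forces $n=\operatorname{trdeg}(F/k)$, so we may assume this throughout.

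\emph{Separating transcendence basis $\Leftrightarrow$ $dx_i$ an $F$-basis of $\Omega^1_{F|k}$.} If $x_1,\dots,x_n$ is a separating transcendence basis, then $F/L$ is finite separable, so $\Omega^1_{F|L}=0$ by (c); then (a) and (b) show that $dx_1,\dots,dx_n$ generate $\Omega^1_{F|k}$, and since $\dim_F\Omega^1_{F|k}=n$ by (d) these $n$ elements form a basis. Conversely, suppose $dx_1,\dots,dx_n$ is an $F$-basis. I would first check that $x_1,\dots,x_n$ are algebraically independent: if $P\in k[X_1,\dots,X_n]\setminus\{0\}$ has minimal total degree subject to $P(x_1,\dots,x_n)=0$, then $0=d\bigl(P(x_1,\dots,x_n)\bigr)=\sum_i(\partial_iP)(x_1,\dots,x_n)\,dx_i$ forces $(\partial_iP)(x_1,\dots,x_n)=0$ for all $i$, hence $\partial_iP=0$ by minimality; this makes $P$ constant if $p=0$, and forces $P\in k[X_1^p,\dots,X_n^p]$ if $p>0$, in which case perfectness of $k$ yields $P=Q^p$ with $\deg Q<\deg P$ and $Q(x_1,\dots,x_n)=0$ --- a contradiction in either case. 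Thus $F/L$ is finite; the left-hand map of (b) is surjective since its image contains the spanning family $\{dx_i\}$, so $\Omega^1_{F|L}=0$, and by (c) $F/L$ is separable, i.e. $x_1,\dots,x_n$ is a separating transcendence basis.

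\emph{The $p$-basis condition, $p>0$.} Since $d$ annihilates $k$ and every $p$-th power, $\Omega^1_{F|k}=\Omega^1_{F|F^p}$, and $F/F^p$ is a finite extension all of whose elements are purely inseparable over $F^p$; recall that $x_1,\dots,x_n$ is a $p$-basis of $F/k$ exactly when the $p^n$ monomials $\prod_ix_i^{a_i}$ ($0\le a_i<p$) form an $F^p$-basis of $F$. If they do, then $F=F^p[x_1,\dots,x_n]$ and the tautological surjection $F^p[X_1,\dots,X_n]/(X_1^p-x_1^p,\dots,X_n^p-x_n^p)\twoheadrightarrow F$ is an isomorphism by an $F^p$-dimension count; differentiating this presentation over $F^p$ gives $\Omega^1_{F|F^p}=\bigoplus_iF\,dx_i$, so the $dx_i$ form an $F$-basis. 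Conversely, if $dx_1,\dots,dx_n$ is an $F$-basis, then by the previous step $x_1,\dots,x_n$ is a separating transcendence basis; writing $F=L(\theta)$ with $\theta$ separable over $L=k(x_1,\dots,x_n)$ and using that $a\mapsto a^p$ is surjective on $k$, one gets $F^p=k(x_1^p,\dots,x_n^p,\theta^p)$, hence $F^p(x_1,\dots,x_n)=L(\theta^p)$, which contains $\theta$ (being simultaneously separable and purely inseparable over $L(\theta^p)$); thus $F=F^p(x_1,\dots,x_n)$, the $p^n$ monomials span $F$ over $F^p$, and since $[F:F^p]=p^n$ by (d) they are a basis. (One can replace this final dimension count by the same differentiate-a-minimal-relation argument as above, applied to a putative $F^p$-linear dependence among these monomials.)

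\emph{Main obstacle.} Nothing here is deep, but the point requiring care is the converse in the middle step: extracting algebraic independence of the $x_i$ from mere $F$-linear independence of the $dx_i$. This is precisely where perfectness of $k$ intervenes --- to take a $p$-th root of a polynomial all of whose formal partial derivatives vanish --- and the same device recurs when matching $[F:F^p]$ with $p^n$. Everything else is the formal calculus of the relative cotangent sequence, together with the remark that all the subextensions in sight are finitely generated and algebraic, hence finite, so that the criteria ``$\Omega^1_{F|L}=0\Leftrightarrow F/L$ separable'' and ``algebraic $\Leftrightarrow$ finite'' may be applied.
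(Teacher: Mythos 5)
The paper gives no proof of this lemma at all---it simply refers to Matsumura (\S 27, Thm.~59; \S 38, Thm.~86)---and your argument is correct and is essentially the standard one found there: the cotangent exact sequence, the criterion $\Omega^1_{F|L}=0\Leftrightarrow F/L$ separable for finite extensions, the identity $\dim_F\Omega^1_{F|k}=\operatorname{trdeg}(F/k)$ for separably generated extensions, and the differentiate-a-minimal-relation trick in which perfectness of $k$ enters to extract $p$-th roots. Nothing further is needed.
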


 \noindent See \textit{e.g} \cite[\S 27, Thm 59; \S 38, Thm. 86]{Mats} for the proof.  Recall that since $k$ is a perfect field, the extension $F/k$ always admits a separating  transcendence basis, and that if $p=0$, every  transcendence basis is separating.\\

\begin{coro}\label{kerd}
$\ker(d:F\rightarrow \Omega_{F|k}^1)= F^p.$
\end{coro}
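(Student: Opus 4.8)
\textbf{Proof proposal for Corollary \ref{kerd}.}

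The plan is to prove the inclusion $F^p \subseteq \ker(d)$ by a one-line computation and the reverse inclusion $\ker(d)\subseteq F^p$ by reducing to a linear algebra statement about $p$-bases, using Lemma \ref{DiffTr}. First, if $p=0$ there is nothing to prove since $F^p = F$ and $d$ is not injective only on... wait --- actually for $p = 0$ the statement reads $\ker(d) = F$, which is false; so implicitly $p > 0$ here (the corollary is only of interest, and only stated, in positive characteristic, as the placement right after the $p>0$ part of Lemma \ref{DiffTr} makes clear). So assume $p > 0$. For any $a \in F$ we have $d(a^p) = p a^{p-1}\, da = 0$, and $d$ is additive and kills $k$ (since $\Omega^1_{F|k}$ is the module of Kähler differentials relative to $k$), so $d$ vanishes on the subfield generated by $k$ and all $p$-th powers, which is exactly $F^p$. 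This gives $F^p \subseteq \ker(d)$.

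For the reverse inclusion, fix a separating transcendence basis $x_1,\dots,x_n$ of $F/k$, which exists because $k$ is perfect. By Lemma \ref{DiffTr} this is simultaneously a $p$-basis of $F/k$, i.e. an $\mathbb{F}_p$-basis of $F$ over $F^p$ in the sense that the monomials $x^{\alpha} = x_1^{\alpha_1}\cdots x_n^{\alpha_n}$ with $0 \le \alpha_i \le p-1$ form an $F^p$-basis of $F$, and $dx_1,\dots,dx_n$ form an $F$-basis of $\Omega^1_{F|k}$. Now take $z \in \ker(d)$ and expand $z = \sum_{\alpha} c_\alpha x^\alpha$ with $c_\alpha \in F^p$ and the sum over $\alpha \in \{0,\dots,p-1\}^n$. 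Applying $d$ and using that $d$ kills each $c_\alpha$ (they lie in $F^p \subseteq \ker d$ by the previous paragraph) together with the Leibniz rule, we get
$$0 = dz = \sum_{\alpha} c_\alpha\, d(x^\alpha) = \sum_{i=1}^n \Big( \sum_{\alpha} c_\alpha\, \alpha_i\, x^{\alpha - e_i} \Big) dx_i,$$
where $e_i$ is the $i$-th standard basis vector and $x^{\alpha - e_i}$ is interpreted as $0$ when $\alpha_i = 0$. Since $dx_1,\dots,dx_n$ are $F$-linearly independent, each inner coefficient vanishes: $\sum_{\alpha} \alpha_i\, c_\alpha\, x^{\alpha - e_i} = 0$ in $F$ for every $i$.

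It remains to extract from these relations that $c_\alpha = 0$ for all $\alpha \ne 0$, i.e. that $z = c_0 \in F^p$. The main (and essentially only) obstacle is a bookkeeping argument on the multi-indices: I would fix an index $i$ and, for each fixed value of the remaining coordinates $\alpha_j$ ($j \ne i$), note that the relation $\sum_{\alpha_i} \alpha_i\, c_\alpha\, x_i^{\alpha_i - 1} = 0$ (with $x_j^{\alpha_j}$ factored out, using that these monomials in the variables $\ne i$ are part of an $F^p$-basis so can be separated) expresses a vanishing $F^p$-linear combination of $1, x_i, \dots, x_i^{p-2}$ with coefficients $\alpha_i c_\alpha$ for $\alpha_i = 1,\dots,p-1$; since $1,x_i,\dots,x_i^{p-1}$ are $F^p$-linearly independent (they are part of the monomial $F^p$-basis), and since $\alpha_i$ is invertible in $\mathbb{F}_p \subseteq F^p$ for $1 \le \alpha_i \le p-1$, we conclude $c_\alpha = 0$ whenever $\alpha_i \ne 0$. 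Letting $i$ range over $1,\dots,n$ kills every $c_\alpha$ with $\alpha \ne 0$, so $z = c_0 \in F^p$. Alternatively, and perhaps more cleanly, one can avoid the indexing entirely: it suffices to treat $n = 1$ (the case $F/k(x)$ finite separable) and then induct, or simply invoke that a $p$-independent set stays $p$-independent, so one may reduce to $F = F^p(x_1,\dots,x_n)$ a purely inseparable extension of $F^p$ of degree $p^n$ and cite the standard fact (e.g. \cite[\S 38]{Mats}) that $\ker(d: F \to \Omega^1_{F|F^p}) = F^p$ there; but since $\Omega^1_{F|k}$ and $\Omega^1_{F|F^p}$ have the same kernel of $d$ (as $k \subseteq F^p$ and $d$ kills $k$), this yields the claim. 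I expect the direct computation above to be the shortest route and would present that.
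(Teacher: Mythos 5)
Your argument for $p>0$ is correct and is exactly the deduction the paper intends: the paper offers no written proof of Corollary~\ref{kerd}, treating it as immediate from Lemma~\ref{DiffTr} (with the reference to \cite[\S 38]{Mats}), and your computation --- expand $z$ in the monomial $F^p$-basis supplied by a separating transcendence basis, use the $F$-linear independence of $dx_1,\dots,dx_n$ to kill each formal partial derivative, then use the $F^p$-linear independence of the monomials and the invertibility of $\alpha_i$ in $\mathbb{F}_p$ to conclude $c_\alpha=0$ for $\alpha\neq 0$ --- is the standard way to fill in that deduction. The indexing step you flag as the ``only obstacle'' is handled correctly: $\alpha\mapsto\alpha-e_i$ is injective on $\{\alpha:\alpha_i\geq 1\}$ and lands inside the basis index set, so linear independence applies directly.

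One correction, though it does not affect the substance: your dismissal of the case $p=0$ rests on a misreading of the paper's convention. The paper defines $F^p$ as the subfield generated by $k$ \emph{and} the elements $x^p$, so for $p=0$ (where $x^0=1$) one gets $F^p=k$, not $F$. With that convention the statement reads $\ker(d)=k$, which is true precisely because $F/k$ is regular (any element of $\ker d$ is separably algebraic over $k$, hence lies in $k$); the paper relies on exactly this later, in the proof of Lemma~\ref{ML} (``the proof when $p=0$ is significantly simpler since $\ker(d)=k$''). So the corollary is meant to cover $p=0$ as well, and a complete proof should include the (one-line) regular-extension argument for that case rather than declare it false.
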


\noindent In particular:
\subsubsection{}\label{Diff1}For every $x\in F$, the following are equivalent:
\begin{itemize} 
\item [(i)]  $x\in F\setminus F^p$;
\item [(ii)]  $dx\not= 0$;
\item [(iii)]  $x$ is a separating transcendence basis for $\overline{k(x)^F}/k$.
\end{itemize} 
If $x\in F$ verifies the above equivalent conditions (i), (ii), (iii),  for every $0\not=f\in \overline{k(x)^F}$  there exists a unique $f':=f'(x)\in  \overline{k(x)^F}$ such that $df=f'dx$.

\subsubsection{}\label{Diff2}For every $x_i\in F$, $i=1,2$, with $dx_1, dx_2\in \Omega_{F|k}^1$  linearly independent over $F$,  and $z_i\in \overline{k(x_i)^F}\setminus k^\times$, $i=1,2$, the following are equivalent:
\begin{itemize} 
\item [(i)]  $z_1/z_2\in F \setminus F^p$;
\item [(ii)]  $dz_1, dz_2$ are  linearly independent over $F$;
\item [(iii)]  $z_i'\not= 0$, $i=1,2$;
\item [(iv)]  $z_i\in   F \setminus F^p$, $i=1,2$.
\end{itemize} 

\

 \subsection{Shifting multiplicatively lines to lines passing through good pairs}\textit{}  \\
  
\noindent Let $k$ be a perfect field of characteristic $p\geq 0$ and let $F/k$ be a finitely generated regular  field extension of transcendence degree  $\geq 2$.

 \begin{defi} We say that $x\in F$ is {\it $F/k$-regular}  if $x$ is transcendental over $k$ and   $F/k(x)$ is a regular field extension.
\end{defi}

  \begin{defi}\label{Good} We say that $(\overline{x}_1,\overline{x}_2)\in F^\times/k^\times$ is a {\it good pair}   if  for some (equivalently, every) lifts $x_1,x_2\in F^\times $ of $\overline{x}_1,\overline{x}_2\in F^\times/k^\times$, $x_1$ is $F/k$-regular and 
 $dx_1, dx_2\in \Omega_{F|k}^1$ are  linearly independent over $F$.
\end{defi}

\begin{lemm}\label{Shift}For every  $x\in F^\times\setminus F^{\times p}$,     there exists a $F/k$-regular   $y \in F^\times$  such that  $dx,dy$ are linearly independent over $F$.
\end{lemm}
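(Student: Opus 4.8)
The claim is that every $x \in F^\times \setminus F^{\times p}$ can be extended, by a single well-chosen element $y$, to a pair that is "half" of a good pair: $y$ is $F/k$-regular and $dx, dy$ are $F$-linearly independent. The natural approach is geometric: fix a normal projective model $X/k$ of $F/k$, interpret $x$ as a dominant rational map $x : X \dashrightarrow \PP^1_k$, and look for $y$ in the generic fibre of $x$ (or close to it) so that $dy$ is independent of $dx$ and the fibres of $y$ over $\PP^1_k$ are geometrically irreducible. Since $\dim X = \operatorname{trdeg}(F/k) \geq 2$, the generic fibre $X_\eta$ of $x$ has dimension $\geq 1$, so there is genuine room to move.

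\textbf{Key steps.} First, I would reduce the $F/k$-regularity of $y$ to a Bertini-type irreducibility statement: $y$ is $F/k$-regular iff $y$ is transcendental over $k$ and $k(y)$ is algebraically closed in $F$, and since $k$ is perfect this is equivalent to $F \otimes_{k(y)} \overline{k(y)}$ being a field, i.e. the generic fibre of $y : X \dashrightarrow \PP^1_k$ being geometrically integral. By the Bertini irreducibility theorems quoted in the strategy section (\cite[Cor.\ 6.11.3]{Jouanolou} for $k$ infinite, \cite[Thm.\ 1.6]{CP} for $k$ finite), a sufficiently general member of a suitable linear system on a suitable model will have geometrically integral fibres; the point is to choose the linear system so that it also controls the differential. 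Second, for the differential condition: by \ref{Diff1}, $dx \neq 0$ already (since $x \notin F^p$); I want $dy \neq 0$ and $dy \notin F \cdot dx$ inside $\Omega^1_{F|k}$, equivalently (extending $x$ to a separating transcendence basis $x = t_1, t_2, \dots, t_n$, using Lemma \ref{DiffTr}) the coefficient of some $dt_j$, $j \geq 2$, in $dy$ is nonzero. This is an open dense condition on $y$ — it fails only on the proper subvariety where $\partial y / \partial t_j = 0$ for all $j \geq 2$, i.e. where $y$ is "constant along the fibres of $x$ to first order," which cannot happen for all $y$ in a large enough family since $X_\eta$ has positive dimension. Third, I would combine the two: pick a very ample divisor on a model, and among members of a large linear subsystem, the locus failing either condition is a proper closed (or thin, over finite fields) subset, so a general member works; if necessary multiply a general section by a chosen transcendental element or adjust by the standard trick of replacing $y$ by $y + x^N$ for suitable $N$ to break any residual inseparability.

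\textbf{Main obstacle.} The delicate point is handling $k$ finite, where "general member of a linear system" must be replaced by the more subtle statement of \cite[Thm.\ 1.6]{CP} (one may need to pass to a suitable Veronese re-embedding to guarantee a geometrically integral hyperplane section defined over the finite field), and where the differential-independence condition, being only Zariski-open, could in principle be disjoint from the thin set of good hyperplane sections. One must therefore verify that the Bertini-type statement over finite fields can be arranged to also meet the open condition $dy \wedge dx \neq 0$ — most cleanly by noting that this open condition is nonempty and that the relevant theorems produce a positive-density (hence in particular nonempty, and intersecting any nonempty open) family, or alternatively by the elementary argument attributed to Tamagawa in Remark \ref{ShiftRem}, which sidesteps Bertini entirely. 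I would present the Bertini argument as the main line and defer the characteristic-$p$, finite-field subtleties to that remark.
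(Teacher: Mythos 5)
Your plan follows the paper's proof of Lemma \ref{Shift} almost step for step: complete $x$ to a separating transcendence basis $x,x_2,\dots,x_r$ realizing a finite dominant map $\underline{x}:X\rightarrow \PP^r_k$, reduce $F/k$-regularity of $y=P\circ\underline{x}$ to geometric irreducibility of a fibre (via \cite[9.7]{EGA4}), observe that the differential condition $dx\wedge dy\neq 0$ amounts to $P\notin k[x,x_2^p,\dots,x_r^p]$, and conclude by \cite[Cor.\ 6.11.3]{Jouanolou} over infinite $k$ and \cite[Thm.\ 1.6]{CP} over finite $k$; the elementary alternative you defer to is exactly Remark \ref{ShiftRem}.

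The one step that would not survive as written is the compatibility argument over a finite field. The principle you invoke --- that a positive-density family of sections meets any nonempty Zariski-open condition --- is false in general, and it is not what is needed here: the failure locus of the differential condition is the subspace $k[x,x_2^p,\dots,x_r^p]\cap S_d$, and the paper must actually \emph{compute} its density (Lemma \ref{Density}: it equals $1/p^{r-1}$, which is $<1$ precisely because $r\geq 2$) before concluding that the density-one set of geometrically irreducible sections from \cite[Thm.\ 1.6]{CP} contains a polynomial outside it. Your sketch gestures at this but supplies no bound, and this quantitative input is the only nontrivial content of the finite-field case. Separately, the proposed fallback of replacing $y$ by $y+x^N$ is a red herring: it changes $dy$ only by a multiple of $dx$, so it cannot create differential independence from $x$, and it may destroy regularity; it should be dropped.
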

\noindent In particular, for every  $\overline{x}\in F^\times/k^\times$, $\overline{x}\not=1$ in $F^\times/p$   there exists  $\overline{y}\in F^\times/k^\times$ such that  $(\overline{y},\overline{x}\overline{y})\in F^\times/k^\times$ is a good pair.

  \begin{proof} This follows from Bertini theorems.  Since $F$ has finite transcendence degree   $r\geq 2$ over $k$ and $k$ is perfect, there exist $x_2,\dots ,x_r\in F^\times$ such that $dx,dx_2,\dots,dx_r$   are linearly independent over $F$. 
 Write $E:=k(x_2,\dots, x_r)$. Let $z\in F^\times$ with minimal polynomial 
  $$P_z=T^d+\sum_{0\leq i\leq d-1}a_i T^i\in E(x)[T]$$
  over   $E(x)$. Then   $dx, dz$ are linearly dependent over $F$ if and only if 
 $\partial a_i/\partial x_j=0 $, $i=0, \ldots, d-1,\, j=2, \ldots n$, or equivalently,
   $P_z\in E^p(x)[T]$. In particular, any $y\in E(x)\setminus E^p(x)$ will have the property that $dx,dy$  are linearly independent over $F$.  We claim that one can find $y\in E(x)\setminus E^p(x)$ such that $y$ is $F/k$-regular. Fix a normal quasi-projective model $X/k$ of $F/k$ such that $x,x_2,\dots, x_r$ induce a finite dominant morphism $\underline{x}:X\rightarrow \mathbb P^r_k$. 
It is enough to show  there exists a  homogeneous polynomial $P\in k[x,x_1,\dots, x_r]\setminus k[x,x_1^p,\dots, x_r^p]$ such that the fiber at $0$ of the composite map $$y:X\stackrel{\underline{x}}{\rightarrow }\PP^r_k\stackrel{P}{\rightarrow}\PP^1_k$$ is geometrically irreducible. Viewing $y$ as an element in $E(x)$, we deduce that  $y$ is $F/k$-regular, by \cite[9.7]{EGA4}. If $k$ is infinite, this follows directly from \cite[Cor. 6.11.3]{Jouanolou}. If $k$ is finite, this follows from \cite[Thm. 1.6]{CP} and Lemma \ref{Density} below (note that $ \frac{1}{p^{r-1}}<1$ since $r\geq 2$). \end{proof}

\noindent Set  $S':=k[x,x_2^p,\dots, x_r^p ]\subset k[x,x_2,\dots, x_r ]$ and define the density of $S'$ as 
$$\delta(S')=\lim_{d\rightarrow +\infty} \frac{|S'\cap S_d|}{|S_d|},$$  
where $S_d\subset k[x,x_2,\dots, x_r]$ denotes the set of homogeneous polynomials of degree $d$. Then

\begin{lemm}\label{Density}  $\delta(S')=\frac{1}{p^{r-1}}.$
\end{lemm}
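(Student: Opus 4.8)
The plan is to compute the density $\delta(S')$ by reducing to a purely combinatorial count of monomials. First I would note that $k[x,x_2,\dots,x_r]$ is a polynomial ring in $r$ variables, so $S_d$ — the space of homogeneous polynomials of degree $d$ — is the $k$-vector space spanned by the monomials $x^{a_0}x_2^{a_2}\cdots x_r^{a_r}$ with $a_0+a_2+\cdots+a_r=d$. The subring $S'=k[x,x_2^p,\dots,x_r^p]$ contributes, in degree $d$, exactly the span of those monomials for which $a_2,\dots,a_r$ are all divisible by $p$ (the exponent of $x$ is unconstrained). So $S'\cap S_d$ is again a $k$-subspace, and since $k$ may be finite we must be careful: $|S'\cap S_d|/|S_d| = |k|^{\dim(S'\cap S_d)}/|k|^{\dim S_d} = |k|^{\dim(S'\cap S_d)-\dim S_d}$, which tends to a limit only if the dimension defect stays bounded — but it does not. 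So the ratio as literally written would tend to $0$, not $1/p^{r-1}$; hence the intended reading must be the \emph{projective} density, i.e. the proportion of \emph{monomials} (equivalently, of lines, equivalently the ratio of the number of monomials in $S'\cap S_d$ to the number in $S_d$), or $k$ is implicitly taken infinite here so that ``density'' refers to the Zariski-density / ratio of dimensions in the limiting sense used in \cite{Jouanolou}. I would adopt the monomial-counting interpretation, which is the one that makes the stated value correct and is what the Bertini application needs.

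With that interpretation, the key step is the asymptotic count. The number of monomials in $S_d$ is $\binom{d+r-1}{r-1}$, a polynomial in $d$ of degree $r-1$ with leading term $d^{r-1}/(r-1)!$. The number of monomials in $S'\cap S_d$ is the number of solutions of $a_0 + p b_2 + \cdots + p b_r = d$ in nonnegative integers $a_0, b_2,\dots, b_r$; equivalently, writing $d = p q + s$ with $0 \le s < p$, for each choice of $b_2,\dots,b_r \ge 0$ with $b_2+\cdots+b_r \le q$ there is exactly one valid $a_0 \ge 0$, so the count is $\binom{q + r - 1}{r-1}$ where $q = \lfloor d/p\rfloor$. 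This is asymptotic to $q^{r-1}/(r-1)! \sim (d/p)^{r-1}/(r-1)! = d^{r-1}/(p^{r-1}(r-1)!)$. Taking the ratio, the $d^{r-1}/(r-1)!$ factors cancel and the limit is $1/p^{r-1}$, as claimed.

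The main obstacle — really the only subtlety — is getting the interpretation of ``density'' right so that the statement is true as written; once that is fixed the proof is the elementary two-line monomial count above, together with the standard estimate $\binom{n+r-1}{r-1} = n^{r-1}/(r-1)! + O(n^{r-2})$. I would spend a sentence pinning down that $S'\cap S_d$ is spanned exactly by the monomials $x^{a_0}\prod_{i\ge 2}x_i^{a_i}$ with $p \mid a_i$ for $i \ge 2$ (this is immediate from unique factorization of monomials and the fact that $S'$ is the $k$-subalgebra they generate), then carry out the count of lattice points and conclude. No deep input is needed; in particular nothing beyond Lemma \ref{DiffTr}'s characterization of $F^p$ and elementary combinatorics enters, and the result is exactly what feeds the Bertini-type argument in the proof of Lemma \ref{Shift} via \cite[Thm. 1.6]{CP}.
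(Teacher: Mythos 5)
Your proof is correct, and at its core it is the same monomial-counting argument as the paper's: both reduce $\delta(S')$ to the ratio of the number of degree-$d$ monomials with all exponents of $x_2,\dots,x_r$ divisible by $p$ to the total number $\binom{d+r-1}{r-1}$. Where you differ is in the final estimate: the paper leaves the count as $\sum_{0\leq n\leq \lfloor d/p\rfloor}\binom{n+r-2}{n}$ and estimates it by expanding $\prod_{1\le k\le r-2}(n+k)$ and invoking Faulhaber's formula, whereas you observe that this sum is exactly $\binom{\lfloor d/p\rfloor+r-1}{r-1}$ (lattice points in a simplex, equivalently the hockey-stick identity) and then apply the one-line asymptotic $\binom{q+r-1}{r-1}\sim q^{r-1}/(r-1)!$. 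Your route is genuinely cleaner and avoids the Bernoulli-number bookkeeping entirely; both give the same limit $1/p^{r-1}$. Your preliminary remark about the meaning of $|S_d|$ is also well taken: read literally as a cardinality over a finite $k$ the ratio would tend to $0$, so the lemma must be (and, given the formula $|S_d|=\binom{d+r-1}{d}$ used in the paper's proof, clearly is) about the monomial/dimension count; since all that the application in Lemma \ref{Shift} needs is $\delta(S')<1$, either reading suffices, but your interpretation is the one that makes the stated value correct.
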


\begin{proof} One has
 $|S_d|=\binom{d+r-1}{d}$ and $|S'\cap S_d|=\sum_{0\leq n\leq \lfloor\frac{d}{p}\rfloor}\binom{n+r-2}{n}$.
To estimate $\frac{|S'\cap S_d|}{|S_d|}$, write $a:= \lfloor\frac{d}{p} \rfloor$, $b:=d-pa$ and $$Q_r(T):=\prod_{1\leq k\leq r-2}(T+k) -T^{r-2}=\sum_{0\leq i\leq r-3}a_iT^i\in \Z[T].$$
Then $$\frac{|S'\cap S_d|}{|S_d|}=\binom{d+r-1}{d}^{-1}\sum_{0\leq n\leq a}\binom{n+r-2}{n}=\frac{r-1}{\prod_{1\leq k\leq r-1}(1+\frac{b+k}{pa})}\frac{1}{(pa)^{r-1}}\sum_{0\leq n\leq a}\prod_{1\leq k\leq r-2}(n+k)$$
 with
 $$\sum_{0\leq n\leq a}\prod_{1\leq k\leq r-2}(n+k)=\sum_{0\leq n\leq a} n^{r-2}+\sum_{0\leq i\leq r-3}a_i\sum_{0\leq n\leq a}n^i.$$
 By Faulhaber's formula
 $$\sum_{1\leq n\leq a}n^i=\frac{a^{i+1}}{i+1}+\frac{1}{2}a^i+\sum_{2\leq k\leq i}\frac{B_k}{k!}\frac{i!}{(i-k+1)!}a^{i-k+1}$$
 (where the $B_k$ are the Bernoulli numbers) one gets $$\sum_{0\leq n\leq a}\prod_{1\leq k\leq r-2}(n+k)\sim \frac{a^{r-1}}{r-1}$$
 whence the assertion.   \end{proof}
   
   \begin{rema}\label{ShiftRem}We resorted to Bertini theorems, which provide a  conceptually natural proof of Lemma \ref{Shift} but one can give more elementary arguments. If $k$ is infinite,  the Galois-theoretic Lemma  \cite[Chap. VIII, Lem. in Proof of Thm. 7]{LangAG}  already shows there exists (infinitely many)  $0\not= a\in k$ such that   $y:=ax_2+x$ is $F/k$-regular; by construction $dx, dy$ are   linearly independent over $F$. If $k$ is finite,  Akio Tamagawa suggested the following arguments. Fix a smooth (not necessarily proper) model $X/k$ of $F/k$ and a closed point $t\in X$. Let $\widetilde{X}\rightarrow X$ denote the blow-up of $X$ at $t$ and $D_t=\P^{n-1}_{k(t)}\subset \widetilde{X}$ the exceptional divisor. Fix a non-empty affine subset $U=\hbox{\rm Spec}(R)\subset \widetilde{X}$ such that $Z:=U\cap D_t$ is non-empty and the rational map $x:\widetilde{X}\rightarrow X\dashrightarrow \mathbb{A}^1_k$ given by $x$ is defined over $U$. We endow $Z$ with its reduced subscheme structure. Since $Z$ is irreducible, one can write $Z=\hbox{\rm Spec}(R/P)$ for some prime ideal $P$ in $R$. Pick $y\in R\setminus R\cap F^p\overline{k(x)^F}$ such that $f\hbox{\rm mod} P\in R/P\setminus k(t)$. Let $\overline{k[y]^F}\subset F$ denote the normal closure of $k[y]$ in $\overline{k(y)^F}/k(y)$. Since $R$ is smooth over $k$ hence normal, $\overline{k[y]^F}\subset R$. Also, by our choice of $y$, the morphism $\overline{k[y]^F}\hookrightarrow R\twoheadrightarrow R/I$ is injective whence  the fraction field $ \overline{k(y)^F}$ of $\overline{k[y]^F}$, embeds into the fraction field $k(t)(x_2,\dots, x_n)$ of $R/I$.  Since $\overline{k(y)^F}/k$ is regular, $k(t)\otimes_k \overline{k(y)^F}\simeq  k(t)\cdot\overline{k(y)^F}\subset k(t)(x_2,\dots, x_n)$. From Lur\"{o}th theorem, one thus has $k(t)\otimes_k \overline{k(y)^F}=k(t)(T)$ for some   $T\in k(t)(x_2,\dots, x_n)$ transcendent over $k(t)$. In other words, $\overline{k(y)^F}$ is the function field of a $k(t)/k$ form $C_y$ of $\P^1_k$. Since $k$ is finite hence perfect with trivial Brauer group, $C_y\simeq \P^1_k$. This shows $y\in F$ is $F/k$ regular. Since $y\notin F^p\overline{k(x)^F}$, $dx$, $dy$ are linearly independent over $F$.
   \end{rema}
  \subsection{Approximating lines passing through good pairs up to powers}\textit{}  \\
  
\noindent  Let $k$ be a perfect field of characteristic $p\geq 0$ and let $F/k$ be a finitely generated regular  field extension of transcendence degree  $\geq 2$.

\begin{lemm}\label{ML}Let $(\overline{x}_1,\overline{x_2})\in F^\times/k^\times$ be a good pair. Then, for every   $\overline{I}\in \overline{\frak{I}}(\overline{x}_1,\overline{x}_2)$ there exists $m\in \Z$ satisfying  (\ref{mcond}), $N\in \Z$, $\alpha=\alpha(\frac{x_1}{x_2})\in  \overline{k(\frac{x_1}{x_2})^F}^{\times }$ and $c\in k^\times$ such that $$I^m=\alpha(\frac{x_1}{x_2})^p(x_1^m-c\frac{  x_1^{pN}}{ x_2^{pN}}x_2^m)\in \frak{l}_{F^p}(x_1^m,x_2^m).$$
Furthermore,  for every $y_i\in \overline{k(x_i)^F}$, $y_i\not\sim_p x_i$, $i=1,2$ such that $I\in \mathcal{I}(\overline{x}_1,\overline{x}_2,\overline{y}_1,\overline{y}_2)$, one has for $i=1,2$
$$y_i^m=\alpha_i^p(x_i^m-c_ix_i^{pN})$$
  for some $\alpha_i\in \overline{k(x_i)^F}$, $c_i\in k^\times$ with the condition $c=c_1/c_2$.
\end{lemm}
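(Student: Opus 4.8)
The plan is to work in the module of differentials $\Omega^1_{F|k}$, following the Rovinsky-style argument sketched in the introduction, and to exploit that $\overline{I}$ lies in $\overline{\mathcal{I}}(\overline{x}_1,\overline{x}_2,\overline{y}_1,\overline{y}_2)$ for suitable $y_i$. Fix lifts $x_1,x_2,I\in F^\times$ and $y_i\in \overline{k(x_i)^F}^\times$ with $y_i\not\sim_p x_i$, $i=1,2$, such that $I\in \overline{k(y_1/y_2)^F}^\times\cdot y_2$ up to $k^\times$; say $I = f\cdot y_2$ with $f\in \overline{k(y_1/y_2)^F}^\times$. Using Subsection \ref{Diff1} write $dy_i = y_i'\,dx_i$ with $y_i'\in \overline{k(x_i)^F}$; since $y_i\not\sim_p x_i$ forces $y_i\notin F^p$ (this is where good-pair/$\sim_p$ data enters), we have $y_i'\neq 0$, and the logarithmic derivatives $\tfrac{dy_i}{y_i} = \tfrac{y_i'}{y_i}\,dx_i$ are nonzero. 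Because $dx_1,dx_2$ are $F$-linearly independent (good pair), $\tfrac{dy_1}{y_1}$ and $\tfrac{dy_2}{y_2}$ span a rank-$2$ subspace; meanwhile $y_1/y_2 \in \overline{k(y_1/y_2)^F}$, which has transcendence degree $\leq 1$, so $df \wedge d(y_1/y_2) = 0$, i.e. $\tfrac{df}{f}$ and $\tfrac{d(y_1/y_2)}{y_1/y_2}$ are $F$-proportional. Expanding $\tfrac{dI}{I} = \tfrac{df}{f} + \tfrac{dy_2}{y_2}$ in the basis $dx_1,dx_2$ and doing the same computation with $\overline{k(x_1)^F}$ (using that $I\in \overline{k(x_1/x_2)^F}^\times\cdot x_2$ as well, by definition of $\frak{I}$), one obtains a first-order linear ODE relating $\tfrac{I'}{I}$ to $\tfrac{x_i'}{x_i}$; integrating it (formally, in each rank-one piece) produces the multiplicative shape $I^m \equiv \alpha^p(x_1^m - c\,(x_1/x_2)^{pN}x_2^m)$ modulo $F^{p\times}$, the prime-to-$p$ power $m$ and the $F^p$-constants $\alpha, c, N$ being exactly the integration constants that Corollary \ref{kerd} ($\ker d = F^p$) does not pin down. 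The normalization \eqref{mcond} is then just a choice of representative for $m$ modulo $p$ (and modulo sign), legitimate since multiplying $I$ by elements of $F^{p\times}$ and raising to prime-to-$p$ powers is harmless at this stage.

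For the ``furthermore'' clause, run the same differential computation but now localized to $\overline{k(x_i)^F}$ for each $i$ separately: since $I = f\cdot y_2$ with $f$ algebraic over $k(y_1/y_2)$, projecting the relation $dI/I = df/f + dy_2/y_2$ onto the $dx_i$-component and using $\Omega^1_{\overline{k(x_i)^F}|k}$ is one-dimensional, one gets a relation forcing $y_i^m = \alpha_i^p(x_i^m - c_i x_i^{pN})$ with $\alpha_i\in\overline{k(x_i)^F}$, $c_i\in k^\times$ (again $k^\times$ rather than $F^p$ because $\overline{k(x_i)^F}/k$ is regular of transcendence degree $1$, so its constants are exactly $k$). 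Comparing the two expressions for $I^m$ — once as $\alpha^p(x_1^m - c(x_1/x_2)^{pN}x_2^m)$ and once via $I = f y_2$ together with $y_2^m = \alpha_2^p(x_2^m - c_2 x_2^{pN})$ and the analogous expansion of $f$ in terms of $y_1/y_2$ — and matching leading/trailing coefficients yields the compatibility $c = c_1/c_2$. The identification of the common exponent $m$ across $i=1,2$ and across the expression for $I$ is forced because $m$ is characterized as the (normalized) ratio controlling $dI/I$ versus $dx_i/x_i$, which is the same for both branches since $I$ simultaneously lies in the $x_1$-branch and the $x_2$-branch.

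The main obstacle I anticipate is making the ``formal integration'' rigorous: $d$ is not injective, so from $d(\log I^m) = d(\text{something explicit})$ one only concludes $I^m/(\text{something}) \in F^{p\times}$, and one must then argue that this $p$-th power can be absorbed into $\alpha^p$ while keeping $\alpha \in \overline{k(x_1/x_2)^F}$ (not merely in $F$). This requires showing the relevant element actually lies in $\overline{k(x_1/x_2)^F}$, which should follow from tracking that every quantity produced by the computation is algebraic over $k(x_1/x_2)$ — a consequence of $f\in\overline{k(y_1/y_2)^F}$, $y_i\in\overline{k(x_i)^F}$, and the transcendence-degree-$1$ constraints — but the bookkeeping of which subfield each constant lives in is delicate, especially distinguishing $F^p$-constants from $k$-constants. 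A secondary subtlety is the case analysis in \eqref{mcond} when $p=2$ (where $\tfrac{p-1}{2}$ degenerates) versus $p$ odd; one handles $p=2$ by the same argument noting $m$ is forced to be odd, hence $\pm 1$ after normalization. I expect the differential-forms input (Subsections \ref{Diff1}, \ref{Diff2} and Corollary \ref{kerd}) to do essentially all the work once the subfield-tracking is set up carefully.
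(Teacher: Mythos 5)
Your overall strategy --- logarithmic differentials, the rank-one subspaces of $\Omega^1_{F|k}$ attached to the transcendence-degree-one subfields, and integration constants living in $\ker d=F^p$ --- is indeed the paper's, but the proposal has a genuine gap at the single hardest point: you never establish that the exponent $m$ is an \emph{integer}. When one expands $dI/I$ in the basis $dx_1,dx_2$ and eliminates, what drops out is not directly a ``prime-to-$p$ power''; it is a Riccati-type equation $x_if_i'=cf_i(1-f_i)$ for $f_i:=x_iy_i'/y_i$, whose coefficient $c$ is a priori only known to lie in $\overline{k(x_1)^F}\cap\overline{k(x_2)^F}=k$. An arbitrary $c\in k^\times$ would not integrate to the multiplicative shape $I^m=\alpha^p(\cdots)$ at all. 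The entire content of Step~1 of the paper's proof is to show that $c$ is (the image of) an integer $m$ satisfying (\ref{mcond}): one writes $f_1=a_1A_1^p/(b_1B_1^p)$ in a normal form in $k(x_1)$ and compares multiplicities of roots on the two sides of the equation. This step uses in an essential way that $x_1$ is $F/k$-regular, so that $\overline{k(x_1)^F}=k(x_1)$ is an honest rational function field in which one can argue with polynomials --- which is precisely why Definition~\ref{Good} demands regularity of $x_1$ and not merely $F$-linear independence of $dx_1,dx_2$. Your proposal invokes only the latter, a clear sign that the key mechanism is absent; your remark that the normalization (\ref{mcond}) is ``just a choice of representative mod $p$ and sign'' presupposes exactly what has to be proved.

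A second gap: your justification for why the constants $c_i$ land in $k^\times$ (``because $\overline{k(x_i)^F}/k$ is regular of transcendence degree $1$, so its constants are exactly $k$'') is incorrect when $p>0$ --- the kernel of $d$ on $\overline{k(x_i)^F}$ consists of the $p$-th powers, not of $k$, so integration only yields $y_i^m=\alpha_i^p(x_i^m+\beta_i^p)$ with $\beta_i\in\overline{k(x_i)^F}$ a priori arbitrary. The reduction to $\beta_i=c_ix_i^N$ with $c_i\in k^\times$ and a \emph{common} $N$ is a separate argument (Lemma~\ref{Explicit}): it exploits the constraint $I/x_2\in\overline{k(x_1/x_2)^F}$, hence $\beta_1/\beta_2\in\overline{k(x_1/x_2)^F}$, together with the algebraic independence of $x_1$ and $x_2$, via an analysis of the minimal polynomial of $\beta_1/\beta_2$ over $k(x_1/x_2)$. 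You correctly anticipate that the subfield bookkeeping is delicate, but the mechanism resolving it is this two-branch compatibility, not regularity of $\overline{k(x_i)^F}/k$.
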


 \noindent \textit{Proof.} The proof when $p=0$ is significantly simpler since $\ker(d)=k$ (recall $F/k$ is regular). We carry out the proof for $p>0$ and just mention the simplifications that occur for $p=0$. The results for $p=0$ are exactly similar but with elements in $F^p$ replaced by elements in $k$. \\

\noindent We are to determine the possible    $y_i\in \overline{k(x_i)^F}^\times$, $y_i\not\sim_p x_i$, $i=1,2$ such that $  \mathcal{I}(\overline{x}_1,\overline{x}_2,\overline{y}_1,\overline{y}_2)\not=\emptyset$ and for all such $y_i$, $i=1,2$ the elements $I\in  \mathcal{I}(\overline{x}_1,\overline{x}_2,\overline{y}_1,\overline{y}_2)$. So assume $  \mathcal{I}(\overline{x}_1,\overline{x}_2,\overline{y}_1,\overline{y}_2)\not=\emptyset$ and fix $I\in   \mathcal{I}(\overline{x}_1,\overline{x}_2,\overline{y}_1,\overline{y}_2)$.\\
 
  \noindent   For $z=x,y$ we have: 
 $I/z_2\in \overline{k(z_1/z_2)^F}^\times,$
 so that there exists $A_z \in  \overline{k(z_1/z_2)^F}^\times$ with $$\frac{d(I/z_2)}{I/z_2}=A_z\frac{d(z_1/z_2)}{z_1/z_2}$$
 (here, we use $z_1/z_2\notin F^p$).  Equivalently,  $$\frac{dI}{I}=\frac{dz_2}{z_2}(1-A_z)+\frac{dz_1}{z_1}A_z. $$
 
 \noindent We deduce
 
$$ A_x\frac{dx_1}{x_1}-A_y\frac{dy_1}{y_1}=(A_x-1)\frac{dx_2}{x_2}-(A_y-1)\frac{dy_2}{y_2}\in Fdx_1\cap Fdx_2=0.$$

\noindent Whence, setting $dy_i=y'_idx_i$ with $y_i' \in \overline{k(x_i)^F}$ (here, we use $x_i\notin F^p$) and using that $dx_1,dx_2 $ are linearly independent over $F$, we obtain
\begin{equation}
\label{functions1}
A_x= A_y\frac{x_1y'_1}{y_1}=(A_y-1)\frac{x_2y_2'}{y_2}+1.
\end{equation}
Set 
\begin{equation}
\label{functions2}
f_i:=\frac{x_iy'_i }{y_i}\in\overline{k(x_i)^F}^\times,\; i=1,2.
\end{equation}
 We obtain
\begin{equation}
\label{functions22} A_x=A_y f_1,\;\; 
 A_y(f_1-f_2)=1-f_2 \end{equation}
By Lemma \ref{Tech} (i)  below and the fact that $y_2\not\sim_px_2$,   $1-f_2\not=0$ hence $f_1-f_2, A_y\not= 0$. As a result  the second equation in (\ref{functions22}) can be rewritten $$A_y=\frac{1-f_2}{f_1-f_2}.$$

\noindent  So, setting $df_i=f'_idx_i$ with $f_i' \in \overline{k(x_i)^F}$, $i=1,2$ we get
 
$$
 \frac{dA_y}{A_y}=\frac{f_1'}{f_2-f_1}dx_1+\frac{(1-f_1)f_2' }{(1-f_2)(f_1-f_2)}dx_2.
$$
We also have $A_y\in  \overline{k(y_1/y_2)^F}^\times$  so that  there exists $\alpha\in \overline{k(y_1/y_2)^F}$ with 
$$\frac{dA_y}{A_y}=\alpha(\frac{y_1'}{y_1}dx_1-\frac{y'_2 }{y_2}dx_2).$$  
(here we use $y_1/y_2\notin F^p$). Since $dx_1,dx_2$ are linearly independent over $F$, one gets $$ \alpha=\frac{y_1f_1' }{y_1'(f_2-f_1)}=\frac{y_2(1-f_1)f_2' }{y_2' (1-f_2)(f_2-f_1)}$$
whence 
 $$ c:=\frac{y_1f_1'}{y_1' (1-f_1)}=\frac{y_2 f_2'}{y_2' (1-f_2)}\in \overline{k(x_1)^F}\cap \overline{k(x_2)^F}=k.$$
By Lemma \ref{Tech} (ii) below and   the fact that $y_1\not\sim_px_1$, we have   $f_1\notin F^p$ hence $c\not= 0$. Recalling that $f_i:=\frac{x_iy'_i }{y_i}$, $i=1,2$ we eventually get 
\begin{equation}\label{fequation} 
x_if_i' =cf_i(1-f_i).
\end{equation}
Now, the last steps of the proof are as follows. \\

\begin{itemize}[leftmargin=0.7cm ,parsep=0cm,itemsep=0cm,topsep=0cm] 
\item Step 1.  Considering (\ref{fequation}) for $i=1$ and using that $x_1\in F^\times$ is $F/k$-regular, we   show that the parameter $c$ necessarily lies in $ \Z$ and can be taken satisfying (\ref{mcond}); we denote it $c:=m$. Once this is settled, one can easily solve (\ref{fequation}) and determine $y_i^m$, $i=1,2$.
\item Step 2. Using the relations between $I, A_x, A_y, f_1, f_2$, we show that $$I^{m}=\alpha^p(x_1^{m}-\frac{\beta_1^p}{\beta_2^p}x_2^{m})$$ for some $\alpha\in  \overline{k(\frac{x_1}{x_2})^F}$, $\beta_i\in \overline{k(x_i)^F}$, $i=1,2$.
\item Step 3. Using that $I/x_2\in \overline{k(\frac{x_1}{x_2})^F}$, we show that there exists $N\in \Z$ such that $\beta_i=c_ix_i^N$ for some $c_i\in k^\times$, $i=1,2$.
 
\end{itemize}

  \subsubsection{Step 1} Write $$f_1=\frac{a_1A_1^p}{b_1B_1^p }$$ with $A_1,B_1, a_1,b_1\in k[x_1]$, $a_1A_1, b_1B_1\in k[x_1]$ coprime, $a_1, b_1, B_1\in k[x_1]$ monic and $a_1,b_1\in k[x_1]$ with zeros of multiplicities at most $p-1$. Then $x_1f_1' =cf_1(1-f_1)$ can be rewritten as
 $$x_1(a_1'b_1-a_1b_1')B_1^p=ca_1(B_1^pb_1-A_1^pa_1).$$
 By considering the multiplicity of a non-zero root of $a_1 $, we see that $a_1 =x_1^m$   for some $0\not= m\in \Z$ with $\frac{1-p}{2}\leq m\leq \frac{p-1}{2}$   so that, for $b_1$, we obtain
\begin{equation}\label{cequation}
((m-c)b_1- x_1b_1')B_1^p=-cx_1^mA_1^p.
\end{equation}
Since $B_1^p$ and $a_1A_1^p=x_1^mA_1^p$ are corpime and $B_1$ is monic, $B_1=1$ and (\ref{cequation}) becomes
\begin{equation}\label{ccequation}
(m-c)b_1-  x_1b_1'=-cx_1^mA_1^p.
\end{equation}

\begin{itemize}[leftmargin=* ,parsep=0cm,itemsep=0cm,topsep=0cm]
 \item If $m\not= 0$, evaluating at $0$ and using that $b_1(0)\not= 0$ (since $a_1=x_1^m$ and $b_1$ are coprime), one gets $c=m$;
 \item If $m=0$, differentiating (\ref{ccequation}), one gets $$(c+1)b_1'-+x_1b_1''=0$$
 and writing $b_1=\sum_j b_{1,j}x_1^j$ this yields 
 $$\sum_j j(c+j)b_{1,j}x_1^{j-1}=0.$$
 Again, since $b_1\notin k[x_1^p]$ by assumption, there exists $p\not| j$ such that $b_{1,j}\not= 0$, which forces $c=-j$.\\
 \end{itemize}

\noindent In any case, one may now write $c=m$ for some $0\not= m\in \Z$ with $\frac{1-p}{2}\leq m\leq \frac{p-1}{2}$ hence 
for $i=1,2$,  (\ref{fequation})  can   be rewritten   $$(\frac{(1-f_i)x_i^m}{f_i})'=0$$
 hence $$\frac{(1-f_i)x_i^m}{f_i}=\beta_i^p$$ for some
  $\beta_i\in \overline{k(x_i)^F} $ or, equivalently, $$\frac{x_i y_i'}{y_i}=f_i=\frac{x_i^m}{x_i^m+\beta_i^p}.$$
  
\noindent Whence
 
 $$\frac{(x_i^m+\beta_i^p)'}{x_i^m+\beta_i^p}=m\frac{x_i^{m -1}}{x_i^m+\beta_i^p}=\frac{ (y_i^m)'}{y_i^m}$$
and
$$d(\frac{y_i^m}{  (x_i^m+\beta_i^p)})=0$$
that is 
\begin{equation}\label{y2}
y_i^m=\alpha_i^p(x_i^m+\beta_i^p)
\end{equation}
for some $\alpha_i\in \overline{k(x_i)^F}$.\\

\begin{rema}If $p=0$, we obtain  that there exists $0\not= m\in \Z$ such that $y_1^m=\alpha_1(x_1^m+\beta_1)$ for some $\alpha_1,\beta_1\in k^\times$. Then the factoriality of $k[x_1]$ and the fact that $x_1\not\sim_0 y_1$ yields $m=\pm 1$. If $p>0$, we are not able to ensure $m=\pm 1$.
\end{rema}
  \subsubsection{Step 2}  Then,
$$A_y=\frac{1-f_2}{f_1-f_2}=\frac{\beta_2^p(x_1^m+\beta_1^p)}{\beta_2^px_1^m -\beta_1^px_2^m }$$
and $$A_x=A_yf_1=(1-\frac{\beta_1^p}{\beta_2^p}(\frac{x_1}{x_2})^{-m })^{-1}.$$ So, writing $\beta:=\frac{\beta_1}{\beta_2}$, $x:= \frac{x_1}{x_2} $ and $J=J(x):=\frac{I}{x_2}$ one gets 
$$\frac{(J^{m })'}{J^{m }}= \frac{m x^{m -1} }{x^{m }-\beta^p}=\frac{(x^{m }-\beta^p)'}{x^{m }-\beta^p}$$
Whence $$J^{m }=\alpha^p (x^{m }-\beta^p)$$ for some $\alpha\in \overline{k(\frac{x_1}{x_2})^F}$
and $$I^{m }=\alpha^p (x_1^{m }-\beta^p x_2^{m }) =\alpha^p (x_1^{m }-\frac{\beta_1^p}{\beta_2^p} x_2^{m }) $$
with $\alpha\in \overline{k(\frac{x_1}{x_2})^F}$ and   $\beta_i\in \overline{k(x_i)^F} $, $i=1,2$. \\
  \subsubsection{Step 3}  The assumption $I/x_2~\in~\overline{k(\frac{x_1}{x_2})^F}$ forces $\beta=\frac{\beta_1}{\beta_2}\in \overline{k(\frac{x_1}{x_2})^F}^\times$. This in turn imposes 

\begin{lemm}\label{Explicit}
 $\beta_1\in k^\times x_1^N,\beta_2\in k^\times x_2^N$ for some $N\in \Z$.
 \end{lemm}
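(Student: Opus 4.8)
The plan is to reduce the statement to the single assertion $\beta_1\in k^\times x_1^{\Z}$ and then prove that by a divisorial argument carried out on a model of the \emph{compositum} $\overline{k(x_2)^F}\cdot\overline{k(x_1/x_2)^F}$, which has transcendence degree exactly $2$ over $k$. First, two facts: since $x_1$ is $F/k$-regular, $\overline{k(x_1)^F}=k(x_1)$, so $\beta_1\in k(x_1)^\times$; and since $dx_1,dx_2$ are linearly independent over $F$, the elements $x_1,x_2$ are algebraically independent over $k$ (differentiate a minimal polynomial relation and use that $k$ is perfect). Write $t:=x_1/x_2$, $E_1:=\overline{k(x_2)^F}$, $E_2:=\overline{k(t)^F}$, so that $\beta_1\in k(x_1)^\times$, $\beta_2\in E_1^\times$ and $\beta:=\beta_1/\beta_2\in E_2^\times$. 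Granting $\beta_1=c_1x_1^{N}$ with $c_1\in k^\times$, $N\in\Z$, the element $\beta_2 x_2^{-N}=c_1 t^{N}/\beta$ lies in $E_1\cap E_2$; since $x_2$ and $t$ are algebraically independent over $k$, $E_1\cap E_2$ has transcendence degree $0$ over $k$, hence equals $k$ ($F/k$ being regular, $k$ is algebraically closed in $F$), so $\beta_2=c_2 x_2^{N}$ with $c_2\in k^\times$. Thus it is enough to prove $\beta_1\in k^\times x_1^{\Z}$, i.e. that $\mathrm{div}_{\PP^1_{x_1}}(\beta_1)$ is supported on $\{0,\infty\}$.

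Set $M:=E_1\cdot E_2\subseteq F$: it contains $k(x_2,t)=k(x_1,x_2)$ and is finite over it, so $M$ has transcendence degree $2$ over $k$. Choose a normal projective model $S/k$ of $M$ on which $\rho_2:S\to C_2$ (a smooth model of $E_1$), $\rho:S\to C$ (a smooth model of $E_2$) and $x_1,x_2:S\to\PP^1$ are morphisms; then $(x_1,x_2):S\to\PP^1\times\PP^1$ is a morphism, and it is dominant and generically finite because $x_1,x_2$ are algebraically independent over $k$ and $[M:k(x_1,x_2)]<\infty$. Since $\beta_1=\beta_2\cdot\beta$ with $\beta_2$ pulled back from $C_2$ and $\beta$ from $C$,
$$\mathrm{div}_S(\beta_1)=\rho_2^{*}\,\mathrm{div}_{C_2}(\beta_2)+\rho^{*}\,\mathrm{div}_{C}(\beta)$$
is a $\Z$-combination of fibres of $\rho_2$ and of $\rho$; hence every prime divisor $Z$ of $S$ with $v_Z(\beta_1)\neq 0$ is contracted by $\rho_2$ or by $\rho$. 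On the other hand $\mathrm{div}_S(\beta_1)=x_1^{*}\,\mathrm{div}_{\PP^1}(\beta_1)$, so its support is the union of the fibres $x_1^{-1}(\lambda)$ over the $\lambda$ with $v_\lambda(\beta_1)\neq 0$, with no cancellation between distinct such fibres.

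Suppose, for contradiction, $v_{\lambda_0}(\beta_1)\neq 0$ for some $\lambda_0\neq 0,\infty$. Then every component $Z$ of $x_1^{-1}(\lambda_0)$ has $v_Z(\beta_1)\neq 0$, hence is contracted by $\rho_2$ or by $\rho$. If $Z$ is contracted by $\rho_2$, then $x_2|_Z$ is constant. If $Z$ is contracted by $\rho$, then $t|_Z$ is constant; as $x_1|_Z=\lambda_0$ forces $v_Z(x_1)=0$, the relation $x_2=x_1/t$ gives $v_Z(x_2)=-v_Z(t)$, and checking the cases $t|_Z\in\{0,\infty\}$ and $t|_Z$ a finite nonzero value shows $x_2|_Z$ is constant as well. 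In either case $(x_1,x_2)$ contracts $Z$, so it contracts every component of $x_1^{-1}(\lambda_0)$. But $x_1^{-1}(\lambda_0)=(x_1,x_2)^{-1}(\{\lambda_0\}\times\PP^1)$ and $(x_1,x_2)$ is dominant, hence surjective ($S$ being proper) and generically finite, so some component of this preimage dominates $\{\lambda_0\}\times\PP^1$ and is therefore not contracted --- a contradiction. Hence $\mathrm{div}_{\PP^1_{x_1}}(\beta_1)$ is supported on $\{0,\infty\}$, and the reduction of the first paragraph finishes the proof.

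I expect the main difficulty to be the geometric bookkeeping: producing the model $S$ on which $\rho_2,\rho,x_1,x_2$ are simultaneously morphisms and phrasing ``contracted'' and ``no cancellation between distinct fibres'' precisely with Weil divisors on a normal surface. The conceptual point that makes everything work is to pass to a model of $E_1\cdot E_2$ rather than of $F$, so that $(x_1,x_2)$ becomes generically finite; the hypothesis $\lambda_0\neq 0,\infty$ is then used exactly to guarantee $v_Z(x_1)=0$, which in turn forces $x_2$ to be constant along each component of $x_1^{-1}(\lambda_0)$.
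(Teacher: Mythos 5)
Your argument is correct, but it takes a genuinely different route from the paper's. The paper first normalizes $\beta$ by a power of $x=x_1/x_2$ so that $0$ is neither a zero nor a pole, then writes down the monic minimal polynomials of $\beta^{\pm 1}$ over $k(x)$ and specializes the resulting identities at a zero $\lambda\neq 0$ of $\beta_1$ or $\beta_1^{-1}$ to force $\beta_2^{\pm d}=0$, a contradiction; this yields $\beta_1\in k^\times$ after the normalization, and the last step $\beta_2\in\overline{k(x)^F}\cap\overline{k(x_2)^F}=k$ is the same as your $E_1\cap E_2=k$. You instead work divisor-theoretically on a normal projective surface model $S$ of the compositum $E_1\cdot E_2$, exploiting that $\mathrm{div}_S(\beta_1)$ is simultaneously $x_1^*\mathrm{div}_{\PP^1}(\beta_1)$ and $\rho_2^*\mathrm{div}_{C_2}(\beta_2)+\rho^*\mathrm{div}_C(\beta)$, and that a full fibre $x_1^{-1}(\lambda_0)$ with $\lambda_0\neq 0,\infty$ cannot have all of its components contracted by $(x_1,x_2)$ because that map is surjective; the hypothesis $\lambda_0\neq 0,\infty$ enters exactly where you say, to get $v_Z(x_1)=0$ and hence constancy of $x_2|_Z$ from constancy of $t|_Z$. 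I checked the key points: $\overline{k(x_1)^F}=k(x_1)$ from $F/k$-regularity of $x_1$ (so the divisor of $\beta_1$ lives on $\PP^1$ and ``supported on $\{0,\infty\}$'' really gives $\beta_1\in k^\times x_1^{\Z}$), the disjointness of distinct fibres of $x_1$ (no cancellation), the purity of dimension of the fibres on the surface $S$, and the reduction giving $\beta_2x_2^{-N}\in E_1\cap E_2=k$ with the \emph{same} exponent $N$, as the lemma requires. What your approach buys is a cleaner and arguably more robust bookkeeping: the specialization of the minimal-polynomial coefficients in the paper's proof is delicate to make precise, whereas your contraction argument is standard intersection-of-fibrations geometry on a surface; the cost is the apparatus of choosing a model $S$ on which $\rho_2$, $\rho$, $x_1$, $x_2$ are all morphisms. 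Both proofs ultimately rest on the same triad of independent ``directions'' $x_1$, $x_2$, $x_1/x_2$ and the regularity of $F/k$.
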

\begin{proof}  Again, write $x:=\frac{x_1}{x_2}$. Up to replacing $\beta$ with $\beta w^N$ for some $N\in \Z$, one may assume $0$ is neither a zero nor a pole of $\beta$, as a function in $x$. We are going to show that, necessarily, $\beta_1,\beta_2\in k^\times$. Let $$P_\epsilon(x,T)=T^d+\sum_{0\leq i\leq d-1}a_{\epsilon, i}(x)T^i\in k(x)[T]$$
be the monic minimal polynomial of $\beta^{\epsilon}$ over $k(x)$ for $\epsilon=\pm 1$. If $\beta_1\notin k^\times  $, then $\beta_1$ or $\beta_1^{-1}$ admits at least one zero $\lambda\not= 0$.   So the relations
 $$ \beta_2^{ -d}+\sum_{0\leq i\leq d-1}a_{1, i}(x)(\beta_1^{-1})^{d-  i}\beta_2^{- i}=0$$
$$ \beta_2^{d}+\sum_{0\leq i\leq d-1}a_{-1, i}(x)\beta_1^{d- i}\beta_2^{i}=0$$
yield $\beta_2^{ -d}=0$ or $\beta_2^{ d}=0$: a contradiction. This shows that $\beta_1\in k^\times  $. Then, as $x_2$ and $w$ are algebraically independent, $\beta_2\in \overline{k(x)^F}\cap \overline{k(x_2)^F}=k$. \\

\noindent This concludes the proof of Lemma \ref{ML}.
 \end{proof}

\begin{lemm}\label{Tech}
Let $x\in F^\times\setminus F^{\times p}$ and $y\in  \overline{k(x)^F}^\times$. Write $dy=y'd x$. Then 

\begin{itemize}
\item [(i)] $ \frac{xy'}{y}=m \Rightarrow \, y\in F^{\times p}x^m $, $m\in \Z$
\item [(ii)] $y\in k(x)^\times$ and $\frac{xy'}{y}\in F^{\times p}\, \Rightarrow \, y\in F^{\times p}x^\Z$  (in particular, $x\sim_py$).
\end{itemize}
 \end{lemm}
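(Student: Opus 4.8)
\emph{Proof idea.} I would treat the case $p>0$; when $p=0$ the statements and arguments are identical with $F^p$ replaced throughout by $k$ (then $\ker(d)=k$, since $F/k$ is regular). For part (i) the plan is simply to divide out the expected monomial: put $z:=yx^{-m}$. Since $x\notin F^p$ we have $dx\neq 0$ (by \ref{Diff1}), so using $dy=y'dx$ the logarithmic derivative of $z$ computes as $\tfrac{dz}{z}=\tfrac{dy}{y}-m\tfrac{dx}{x}=\tfrac1x\bigl(\tfrac{xy'}{y}-m\bigr)dx=0$. Hence $z\in\ker(d\colon F\to\Omega^1_{F|k})=F^p$ by Corollary \ref{kerd}; as $z\neq 0$ and $k$ is perfect, $z\in F^{\times p}$, so $y=zx^m\in F^{\times p}x^m$.

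For part (ii) the plan is to read off the multiplicities of $y$ from the pole divisor of the rational function $\tfrac{xy'}{y}\in k(x)$. First, $x\notin F^p$ gives $dx\neq 0$ and $x$ transcendental over $k$ ($F/k$ regular), so $k[x]$ is a genuine polynomial ring and I would write $y=cx^n\prod_j Q_j^{e_j}$ with $c\in k^\times$, the $Q_j\in k[x]$ distinct monic irreducibles different from $x$, $n\in\Z$, and $e_j\in\Z\setminus\{0\}$. Perfectness of $k$ enters here for the first time: an irreducible $Q\in k[x]$ satisfies $Q'\neq 0$, since otherwise $Q\in k[x^p]=k[x]^p$ would be a proper $p$-th power; hence each $Q_j$ is separable and its roots in $\overline{k}$ are simple and nonzero. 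Differentiating logarithmically (using $c'=0$) gives $\tfrac{xy'}{y}=\bar n+\sum_j\bar e_j\,\tfrac{xQ_j'}{Q_j}$, where bars denote images in $\mathbb{F}_p$; over $\overline{k}$ this is a rational function regular at $0$ and at $\infty$, having at each root $\alpha\in\overline{k}^{\,\times}$ of a $Q_j$ at worst a \emph{simple} pole, with residue $\bar e_j\alpha$ --- a genuine pole there precisely when $\bar e_j\neq 0$.

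Next I would invoke the hypothesis $\tfrac{xy'}{y}\in F^{\times p}\subset F^p=\ker(d)$. For $g\in k(x)$, $dg=0$ in $\Omega^1_{F|k}$ forces $g'=0$ (otherwise $dx=(g')^{-1}(g'dx)=0$), i.e. $g\in k(x^p)$; so $\tfrac{xy'}{y}\in k(x^p)$, and every element of $k(x^p)$ has all of its zero and pole orders over $\overline{k}$ divisible by $p$ (write it as a rational function in $x^p$ and extract $p$-th roots of the roots, Frobenius being bijective on $\overline{k}$). As $p\geq 2$, a simple pole is impossible, so $\bar e_j=0$, i.e. $p\mid e_j$, for every $j$; extracting $c^{1/p}\in k=k^p$ then gives $y=x^n\bigl(c^{1/p}\prod_j Q_j^{e_j/p}\bigr)^p\in F^{\times p}x^{\Z}$. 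For the parenthetical assertion, $F^\times/p$ is an $\mathbb{F}_p$-vector space (one has $k^\times\subset F^{\times p}$ and $F^{\times p}/k^\times=p\cdot(F^\times/k^\times)$, the latter free abelian by \ref{Free}) in which the class of $x$ is nonzero; we have just seen $\overline{y}=\overline{x}^{\,n}$ there, and $p\mid n$ is impossible because it would give $y\in F^{\times p}$, hence $y'=0$ and $\tfrac{xy'}{y}=0\notin F^{\times p}$; therefore $1\neq\overline{x}^{\,n}=\overline{y}\in\langle\overline{x}\rangle\cap\langle\overline{y}\rangle$, i.e. $x\sim_p y$.

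The hard part will be the implication ``$\tfrac{xy'}{y}$ is a nonzero $p$-th power $\Rightarrow p\mid e_j$ for all $j$''. It relies on both halves of the perfectness of $k$ --- separability of the $Q_j$, which makes the poles of the logarithmic derivative exactly simple, and $k=k^p$, needed for $k(x^p)=k(x)^p$ and for the $p$-th root of $c$ --- together with the remark that $k(x)\cap\ker(d\colon F\to\Omega^1_{F|k})=k(x^p)$, which is where the hypothesis $x\notin F^p$ is used.
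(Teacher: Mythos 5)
Your proof is correct and follows essentially the same route as the paper's: part (i) is the identical computation $d(y/x^{m})=0$, hence $y/x^m\in\ker(d)=F^p$, and for part (ii) both arguments factor $y$ in $k(x)$ and detect the multiplicities mod $p$ of its nonzero roots through the logarithmic derivative $\frac{xy'}{y}$. The only difference is presentational --- the paper clears denominators and counts multiplicities of roots in the resulting polynomial identity $x(a'b-ab')v^p=u^pab$, while you read off the same information from the simple poles of $\frac{xy'}{y}$ together with the fact that an element of $k(x^p)=k(x)^p$ has all pole orders divisible by $p$.
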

 
 \begin{proof} For (i), just observe that  $ \frac{xy'}{y}=m$ if and only if $$d(\frac{y}{x^m})=\frac{x^my'-mx^{m-1}y}{x^{2m}}=0$$
For (ii), write $y=\frac{a}{b}A^p$ with $0\not= a,b\in k[x]$ coprime and with zeros of multiplicities at most $p-1$ and $a$ monic (if $p=0$, just impose $a,b\in k[x]$ to be coprime and $a$ monic). By assumption there exist $u,v\in k[x]$ coprime such that 
 $$x(a'b-ab')v^p=u^pab.$$
 Assume $a$ has a non-zero root $\alpha$ of multiplicity $1\leq n_\alpha\leq p-1$. Then, since $a$ and $b$ are coprime, on the left hand side, the multiplicity of $\alpha$ is congruent to  $n_\alpha-1$  (mod~$p$), while, one the right hand side, the  multiplicity of $\alpha$ is congruent to  $n_\alpha$ (mod $p$): a contradiction. This shows there exists $0\leq m\leq p-1$ such that $a=x^m$. The equation thus becomes  $$(mb-xb')v^p=u^pb$$
 or, equivalently,
 $$b(m^{\frac{1}{p}}v-u)^p=(mv^p-u^p)b=xb'v^p$$
(recall $k$ is perfect).  Again,   considering the multiplicity of a non-zero root of $b$, one sees that $b \in k^\times x^n$ for some integer $0\leq n\leq p-1$.
As a result $y=\frac{a}{b}A^p\in F^{\times p} x^\Z$ as claimed. 
\end{proof}

  \subsection{Recovering lines in $F^\times/p$, $F'^{\,\times}/p$ up to powers}  Let $k,k'$ be perfect fields of characteristic $p\geq 0$, let $F/k$, $F'/k'$ be finitely generated  regular field extensions  of transcendence degree  $\geq 2$ and let $$
\overline{\psi}: F^\times/k^\times \tilde{\rightarrow} F'^{\,\times}/k'^{\,\times}$$ be a group isomorphism preserving algebraic dependence. Write again $$\overline{\psi}:F^\times\!/p\tilde{\rightarrow}F'^{\,\times}\!/p$$ for the group isomorphism induced by  $\overline{\psi}$.

 \begin{prop}\label{Lines}There exists $m\in \Z$ satisfying (\ref{mcond}) such that for every $\overline{x}_1\not=\overline{x}_2\in F^\times/p$, $$\overline{\psi}(\frak{l}_{F^p}(\overline{x}_1,\overline{x}_2))^m= \frak{l}_{F^{' p}}(\overline{\psi}(\overline{x}_1)^m,\overline{\psi}(\overline{x}_2)^m)$$ 
\end{prop}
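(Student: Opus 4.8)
The plan is to combine the ``line approximation'' Lemma \ref{ML} with the hypothesis that $\overline{\psi}$ preserves algebraic dependence, in order to transport the combinatorial description of lines passing through good pairs from $F^\times/p$ to $F'^{\,\times}/p$. First I would fix, using Lemma \ref{Shift}, a single good pair $(\overline{x}_1,\overline{x}_2)$ in $F^\times/k^\times$; applying Lemma \ref{ML} to every $\overline{I}\in\overline{\frak{I}}(\overline{x}_1,\overline{x}_2)$ produces, for each such $\overline{I}$, an integer $m_{\overline{I}}$ satisfying (\ref{mcond}) with $I^{m_{\overline{I}}}\in\frak{l}_{F^p}(x_1^{m_{\overline{I}}},x_2^{m_{\overline{I}}})$. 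The first real task is to show that this integer is \emph{independent of $\overline{I}$} (it depends only on the good pair, in fact only on $\overline{\psi}$): this should follow because $\overline{\frak{I}}(\overline{x}_1,\overline{x}_2)$, together with the sets $\overline{\frak{l}}(\overline{I},\overline{x}_i)$, gives (after raising to the $m$-th power) an unambiguous parametrization of the line $\frak{l}_{F^p}(\overline{x}_1,\overline{x}_2)$, and a compatibility between the parametrizations attached to different $\overline{I}$'s forces the exponents to agree up to sign — and the normalization (\ref{mcond}) pins down the sign ambiguity once we pass to the $\Z/2$-quotient, or is absorbed by replacing $m$ with $-m$.

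Next I would show that the collection of data $\bigl(\overline{\frak{I}}(\overline{x}_1,\overline{x}_2),\{\overline{\frak{l}}(\overline{I},\overline{x}_i)\}\bigr)$ is defined purely in terms of the multiplicative group $F^\times/k^\times$, the relation $\sim_p$, and the relation $\equiv$: this is immediate from the definitions in Subsection \ref{Notation}, since $\overline{k(x_i)^F}^\times/k^\times$ is the $\equiv$-equivalence class of $\overline{x}_i$ together with $1$, and $\sim_p$ is expressed via $\overline{\psi}$-invariantly on $F^\times/p$. Because $\overline{\psi}$ preserves algebraic dependence, it carries good pairs to good pairs (one checks the field-theoretic ``regular'' condition and the differential-independence condition are both detected by $\equiv$ and $\sim_p$, using \ref{Diff1}, \ref{Diff2} and the characterization of preserving algebraic dependence recalled after the definition), and it intertwines $\overline{\frak{I}}(\overline{x}_1,\overline{x}_2)$ with $\overline{\frak{I}}(\overline{\psi}(\overline{x}_1),\overline{\psi}(\overline{x}_2))$ and the associated $\overline{\frak{l}}(\overline{I},\overline{x}_i)$ with their primed counterparts. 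Applying Lemma \ref{ML} on the $F'$-side then yields an integer $m'$ satisfying (\ref{mcond}), and the compatibility of the two parametrizations under $\overline{\psi}$ forces the line $\frak{l}_{F^p}(\overline{x}_1,\overline{x}_2)$ to be sent, after raising to suitable powers, onto $\frak{l}_{F^{'p}}(\overline{\psi}(\overline{x}_1)^{m'},\overline{\psi}(\overline{x}_2)^{m'})$ — with the relation $\overline{\psi}(\frak{l}_{F^p}(\overline{x}_1,\overline{x}_2))^{m}=\frak{l}_{F^{'p}}(\overline{\psi}(\overline{x}_1)^m,\overline{\psi}(\overline{x}_2)^m)$ holding for the common value $m$, after checking $m=m'$ (or absorbing a sign).

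To upgrade from lines through a \emph{fixed} good pair to \emph{all} lines $\frak{l}_{F^p}(\overline{x}_1,\overline{x}_2)$ with $\overline{x}_1\neq\overline{x}_2$ in $F^\times/p$, I would use the ``shifting'' part of Lemma \ref{Shift}: any line can be multiplicatively translated so that it passes through (the image of) a good pair, and multiplicative translation is visibly compatible with $\overline{\psi}$ and with raising to the $m$-th power, so the identity extends to every line. The final point is that the integer $m$ produced this way does not depend on which line or which good pair we started from: this is where one must argue that a single global $m$ works, by comparing the exponents obtained from two overlapping lines — two distinct lines through a common point share enough structure (a pencil of good pairs) that their exponents must coincide. \textbf{The main obstacle} I anticipate is precisely this globalization/consistency step: Lemma \ref{ML} only controls each $\overline{I}$ up to a prime-to-$p$ power and an affine $F^p$-transformation, so a priori the exponent could fluctuate, and ruling this out requires carefully exploiting the way the sets $\frak{l}^\circ(\overline{I},\overline{x}_i)$ overlap as $\overline{I}$ and the auxiliary good pair vary — essentially showing the $\overline{\psi}$-image of a whole ``line plus its transversal parametrizing data'' is rigid enough to determine a single exponent. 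The normalization (\ref{mcond}) is what makes ``the'' exponent well-defined, and I expect it to be used repeatedly to discard the sign and $\bmod\,p$ ambiguities.
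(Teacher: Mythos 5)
Your overall architecture (reduce to lines through good pairs via Lemma \ref{Shift}, use Lemma \ref{ML} and the parametrizing data $\frak{I}(\overline{x}_1,\overline{x}_2)$, $\frak{l}(\overline{I},\overline{x}_i)$ to pin down an exponent $m$, then globalize over all lines) matches the paper's, and your identification of the globalization step as the delicate point is apt: the paper handles it by using Lemma \ref{LinesLem2} to characterize $m$ as the unique exponent with $\overline{\psi}(\frak{l}_k(1,x_i))^m\subset\frak{l}_{F'^{\,p}}(1,\overline{x}_i'^{\,m})$, so that $m$ depends only on $\overline{\psi}(\frak{l}_k(1,x_i))$ and two lines can be compared through a chain of good pairs. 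However, there is a genuine gap at the pivot of your argument: the claim that $\overline{\psi}$ carries good pairs to good pairs. Goodness of $(\overline{x}_1,\overline{x}_2)$ requires a lift $x_1$ to be $F/k$-regular and $dx_1,dx_2$ to be $F$-linearly independent, and neither condition is an invariant of the data $(F^\times/k^\times,\equiv,\sim_p)$. For instance, in $k(s,t)$ the classes $\overline{s}$ and $\overline{s^2}$ have the same $\equiv$-class while only $s$ is $F/k$-regular; and the pairs $(t,u)$ and $(t,u^p+t)$ are both algebraically independent and non-$\sim_p$-related, yet only the first is differentially independent. (The introduction's remark that the authors use the \emph{field-theoretic} notion of regular element, rather than a group-theoretic one, is precisely the warning here; a posteriori $\overline{\psi}$ preserves good pairs because it is $\overline{\phi}^{\pm1}$, but that cannot be invoked in the proof.) The paper sidesteps this entirely: it applies Lemma \ref{Shift} \emph{in $F'$} to shift the line so that its \emph{image} $(\overline{x}_1',\overline{x}_2')$ is a good pair, and then invokes Lemma \ref{ML} only on the $F'$-side; the only input needed on the $F$-side is the elementary Lemma \ref{LinesLem1}, namely $\frak{l}_k(\overline{x}_1,\overline{x}_2)\subset\frak{I}(\overline{x}_1,\overline{x}_2)$, whose $\overline{\psi}$-image lands in $\frak{I}(\overline{x}_1',\overline{x}_2')$ because $\frak{I}$ is defined purely in terms of $\equiv$ and $\sim_p$. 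Your plan of applying Lemma \ref{ML} on the $F$-side to the chosen good pair is not what is needed and should be restructured accordingly.

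A second gap: you assert that the image is \emph{onto} the target line, but the argument sketched only yields the inclusion $\overline{\psi}(\frak{l}_{F^p}(\overline{x}_1,\overline{x}_2))^m\subset\frak{l}_{F'^{\,p}}(\overline{\psi}(\overline{x}_1)^m,\overline{\psi}(\overline{x}_2)^m)$, since Lemma \ref{ML} constrains where elements of $\frak{I}$ can go without saying that every point of the target line is attained. The paper obtains equality by running Step 1 symmetrically for $\overline{\psi}^{-1}$, producing an exponent $m'$, and then proving $mm'\equiv 1\ (\mathrm{mod}\ p)$ by an explicit computation (differentiating $(x_1+x_2)^{mm'}=a_1^px_1^{mm'}+a_2^px_2^{mm'}$ for a pair with $dx_1,dx_2$ independent); only with this congruence do the two inclusions compose into the stated equality. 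Your proposal needs to supply this step.
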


 \begin{proof} For simplicity, write $\overline{x}':=\overline{\psi}(\overline{x})$. We proceed in two steps.\\
 
\begin{itemize}[leftmargin=* ,parsep=0cm,itemsep=0cm,topsep=0cm]
 \item Step 1: We first show there exists $m\in \Z$ satisfying (\ref{mcond}) such that for every $\overline{x}_1\not=\overline{x}_2\in F^\times/p$, $$\overline{\psi}(\frak{l}_{F^p}(\overline{x}_1,\overline{x}_2))^m\subset \frak{l}_{F^{' p}}(\overline{x}^{'m}_1, \overline{x}^{'m}_2)$$ 
 By Lemma \ref{Shift}, one may assume $(\overline{x}_1',\overline{x}_2')\in F'^{\,\times}/k'^{\,\times}$ is a good pair.   Since $$\frak{l}_{F^p}(\overline{x}_1,\overline{x}_2)=\bigcup_{\alpha\in F^\times } F^{p\times}\frak{l}_k(\overline{x}_1,\overline{\alpha}^p\overline{x}_2).$$ 
 and for every $\alpha\in F^\times$, $(\overline{x}_1',\overline{\alpha}^p\overline{x}_2')\in F'^{\,\times}/k'^{\,\times}$ is again a good pair, it is 
  enough to prove that there exists $m\in \Z$ satisfying (\ref{mcond}) such that for every $\overline{x}_1\not=\overline{x}_2\in F^\times/p$ for which  $(\overline{x}_1', \overline{x}_2')\in F'^{\,\times}/k'^{\,\times}$ is a good pair, one has
 $$\overline{\psi}(\frak{l}_k(\overline{x}_1, \overline{x}_2))^m\subset \frak{l}_{F^{'p}}(\overline{x}_1^{'m}, \overline{x}_2^{'m}).$$
Write 
$$\frak{I}(\overline{x}_1', \overline{x}_2')_m:=\lbrace I'\in \frak{I}(\overline{x}_1', \overline{x}_2')\;|\; I'^{\,m}\in \frak{l}_{F'^{\,p}}(\overline{x}_1'^{\,m}, \overline{x}_2'^{\,m})\rbrace.$$
From Lemma \ref{ML} and Lemma \ref{LinesLem2} below, one has $$\frak{I}(\overline{x}_1', \overline{x}_2')= \lbrace \overline{x}_1', \overline{x}_2'\rbrace\bigsqcup_{m}(\frak{I}(\overline{x}_1', \overline{x}_2')_m\setminus \lbrace \overline{x}_1', \overline{x}_2'\rbrace),$$
where the union is over all $m\in \Z$ satisfying (\ref{mcond}) and for every $I'\in \frak{I}(\overline{x}_1', \overline{x}_2')_m$, one has 
 $$\frak{l}(\overline{I'},\overline{x}_i')^m\subset \frak{l}_{F'^{\,p}}(1,\overline{x}_i'^{\,m}), \; i=1,2.$$
 From Lemma \ref{LinesLem1} below, $\frak{l}_k(\overline{x}_1, \overline{x}_2)\subset  \frak{I}(\overline{x}_1,\overline{x}_2)$ so that $\overline{\psi}(\frak{l}_k(\overline{x}_1, \overline{x}_2))\subset  \frak{I}(\overline{x}'_1,\overline{x}'_2)$. Fix $I\in \frak{l}_k(\overline{x}_1, \overline{x}_2)$, $\overline{I}\not=\overline{x}_1,\overline{x}_2$ in $F^\times/p$ and let $m:=m(\overline{x}_1,\overline{x}_2, \overline{I})\in \Z$ be the unique integer satisfying (\ref{mcond}) such that $I'\in \frak{I}(\overline{x}_1', \overline{x}_2')_m$.  For $i=1,2$, one  has $\frak{l}_k(1,x_i)\subset \frak{l}(\overline{I},\overline{x}_i)$ hence 
$$\overline{\psi}(\frak{l}_k(1,x_i))^m\subset \overline{\psi}(\frak{l}(\overline{I},\overline{x}_i))^m\subset \frak{l}(\overline{I'},\overline{x}_i')^m\subset \frak{l}_{F'^{\,p}}(1,\overline{x}_i'^{\,m}).$$
By Lemma \ref{LinesLem2}, this characterizes $m$ as the unique integer satisfying (\ref{mcond}) such that 
$$\overline{\psi}(\frak{l}_k(1,x_i))^m\subset   \frak{l}_{F'^{\,p}}(1,\overline{x}_i'^{\,m}).$$
Hence  $m$ is uniquely determined by any of the two sets  $\overline{\psi}(\frak{l}_k(1,x_i))$, $i=1,2$ and, in particular,  does not depend on $I$. As a result:
$$\overline{\psi}(\frak{l}_k(\overline{x}_1,\overline{x}_2))^m\subset  \frak{l}_{F'^{\,p}}(\overline{x}_1'^{\,m},\overline{x}_2'^{\,m}).$$
In fact, $m$ does not depend on the line $\frak{l}_k(\overline{x}_1,\overline{x}_2)$ either. Indeed, let $\frak{l}_k(\overline{y}_1,\overline{y}_2)$  be any other line such that $(\overline{y}'_1,\overline{y}'_2)\in F'^{\,\times}/k'^{\,\times}$ is a good pair and let $n\in \Z$ satisfying (\ref{mcond}) be the attached integer. 
Then, necessarily, at least one of the two pairs $(\overline{x}_1',\overline{y}_1')$, $(\overline{x}_1',\overline{y}_2')$ - say $(\overline{x}_1',\overline{y}_1')$ - is a good pair; let $r\in \Z$ satisfying (\ref{mcond}) be the attached integer. Then, by considering $\overline{\psi}(\frak{l}_k(1,x_1))$, one has  $m=r$ and by considering $\overline{\psi}(\frak{l}_k(1,y_1))$, one  has  $n=r$.\\

\item Step 2: Since the situation is symmetric in $F^\times/k^\times$ and $F'^{\,\times}/k'^{\,\times}$ (here, we use that $\overline{\psi}$ preserves algebraic dependence if and only if $\overline{\psi}^{-1}$ does, by the very definition of `preserving algebraic dependence'), there exists    $m'\in \Z$ satisfying conditions (\ref{mcond}), such that for every $\overline{x}_1',\overline{x}_2'\in F'^{\,\times}/k'^{\,\times}$ with $\overline{x}_1'\not=\overline{x}_2'$ in $F'^{\,\times}/p$ one also has 
\begin{equation}\label{starprime}
\;\;\overline{\psi}^{-1}(\frak{l}_{F'^{\,p}}(\overline{x}'_1,\overline{x}'_2))^{m'}\subset \frak{l}_{F^{p}}(\overline{\psi}^{-1}(\overline{x}'_1)^{m'}, \overline{\psi}^{-1}(\overline{x}_2')^{m'})
\end{equation}
in $F^{\times}\!/p$.
As a result
$$
\frak{l}_{F^p}(\overline{x}_1,\overline{x}_2)^{mm' }=\overline{\psi}^{-1}(\overline{\psi}(\frak{l}_{F^p}(\overline{x}_1,\overline{x}_2))^{m })^{m' } 
\subset \overline{\psi}^{-1}(\frak{l}_{F'^{\,p}}(\overline{\psi}(\overline{x}_1)^{m }, \overline{\psi}(\overline{x}_2)^{m }))^{m' }
\subset  \frak{l}_{F^p}(\overline{x}_1^{mm' },\overline{x}_2^{mm' })$$

\

\noindent In particular, if $\overline{x}_1,\overline{x}_2\in F^\times/k^\times$ are such that $d x_1, d x_2$ are linearly independent over $F$, we obtain 
$$(x_1+x_2)^{mm'}= a_1^px_1^{mm' }+a_2^px_2^{mm'}\;\hbox{\rm for some}\; a_1,a_2\in F^\times
$$
hence
$$mm' (x_1+x_2)^{mm'-1}(d x_1+d x_2)=mm'(a_1^px_1^{mm'-1}d x_1+a_2^px_2^{mm' -1}d x_2)
$$ 
and $$(x_1+x_2)^{mm' -1}=a_1^px_1^{mm'-1}=a_2^px_2^{mm' -1}.$$
This forces $$mm' \equiv 1\, (mod\, p).$$

\

\noindent Now, let $\overline{x}_1,\overline{x}_2\in F^\times/k^\times$ with $\overline{x}_1\not=\overline{x}_2$ in $ F^\times/p$ and apply (\ref{starprime}) to $\overline{x}_i'=\overline{\psi}_1(\overline{x}_i)^{m}$, $i=1,2$. Using $mm' \equiv 1\, (mod\, p)$, we obtain
$$\overline{\psi}^{-1}(\frak{l}_{F'^{\,p}}(\overline{\psi}(\overline{x}_1)^m,\overline{\psi}(\overline{x}_2)^m))\subset \frak{l}_{F^p}(\overline{x}_1,\overline{x}_2)^m$$
in $F^\times\!/p$. This concludes the proof of Proposition \ref{Lines}.

\end{itemize}
\end{proof}

\begin{lemm}\label{LinesLem1} 
For every $\overline{x}_1,\overline{x}_2\in F^\times/k^\times$ such that $\overline{x}_1\not=\overline{x}_2\in F^\times/p$,  one has  $\frak{l}_k(\overline{x}_1,\overline{x}_2)\subset  \frak{I}(\overline{x}_1,\overline{x}_2)$.
 
\end{lemm}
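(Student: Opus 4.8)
The plan is to prove the (equivalent) inclusion $\frak{l}_k(\overline x_1,\overline x_2)\subseteq\overline{\frak{I}}(\overline x_1,\overline x_2)$, i.e.\ that a suitable lift of each element of the line lies in $\frak{I}(\overline x_1,\overline x_2)$. Fix lifts $x_1,x_2\in F^\times$ of $\overline x_1,\overline x_2$; a general point of $\frak{l}_k(\overline x_1,\overline x_2)$ lifts to $z=ax_1+bx_2\in F^\times$ with $a,b\in k$ not both zero. Since $\overline x_1\not=\overline x_2$ already in $F^\times/k^\times$, the $x_i$ are $k$-linearly independent, so $z\not=0$; and since $k^\times\subseteq\overline{k(\cdot)^F}^\times$, each factor $\overline{k(\cdot)^F}^\times\cdot(\cdot)$ in the definition of $\mathcal{I}$ is stable under multiplication by $k^\times$, hence so are $\mathcal{I}$ and $\frak{I}$ and the choice of lift is immaterial. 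The first observation is that the condition $z\in\overline{k(x_1/x_2)^F}^\times\cdot x_2$ is automatic: $z/x_2=a(x_1/x_2)+b$ lies in $k(x_1/x_2)$ and is nonzero (it would vanish only if $x_1/x_2\in k$, i.e.\ $\overline x_1=\overline x_2$). So the whole line already lies in the first factor, and it remains to find $y_1\in\overline{k(x_1)^F}^\times$ and $y_2\in\overline{k(x_2)^F}^\times$ with $x_1\not\sim_p y_1$, $x_2\not\sim_p y_2$ and $z\in\overline{k(y_1/y_2)^F}^\times\cdot y_2$.

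To get the second factor, I would set $y_1:=ax_1+c$ and $y_2:=c-bx_2$ for a well-chosen $c\in k$, so that $y_1-y_2=z$; then, as soon as $y_1,y_2\in F^\times$, one has $z/y_2=y_1/y_2-1\in k(y_1/y_2)^\times\subset\overline{k(y_1/y_2)^F}^\times$, hence $z\in\overline{k(y_1/y_2)^F}^\times\cdot y_2$, i.e.\ $z\in\mathcal{I}(\overline x_1,\overline x_2,\overline y_1,\overline y_2)\subset\frak{I}(\overline x_1,\overline x_2)$. Everything then comes down to choosing $c$ so that $y_1,y_2\in F^\times$ and $x_i\not\sim_p y_i$, and the crucial point is the auxiliary claim: \emph{if $x\in F^\times$, $c\in k^\times$ and $x+c\not=0$, then $x\not\sim_p(x+c)$}. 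Granting it and assuming first $\overline x_1,\overline x_2\not=1$ in $F^\times/k^\times$ — so that $x_1,x_2$ are transcendental over $k$ by regularity, whence $ax_1+c$ and $c-bx_2$ are automatically nonzero — any $c\in k^\times$ works: when $a\not=0$ we have $y_1=a(x_1+c/a)$ with $a\in k^\times$, so $\overline y_1$ and $\overline{x_1+c/a}$ coincide in $F^\times/k^\times$ and the claim gives $x_1\not\sim_p y_1$; when $a=0$, $y_1=c\in k^\times$, so $\overline y_1=1$ in $F^\times/p$ and again $x_1\not\sim_p y_1$; symmetrically for $y_2$. The degenerate case where some $\overline x_i=1$ (so $\overline{k(x_i)^F}=k$ and any admissible $y_i$ lies in $k^\times$) is handled separately by taking that $y_i$ in $k^\times$ and the other $y_j$ of the form $x_j+c$, $c\in k^\times$; I would dispatch it in a couple of lines.

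The main obstacle is the auxiliary claim $x\not\sim_p(x+c)$. The subtlety is that, for $p>0$, the $\Z$-independence of $\overline x$ and $\overline{x+c}$ in $F^\times/k^\times$ (visible already in $k(x)^\times/k^\times$) does not by itself preclude $\sim_p$, since a relation could appear after reduction modulo $p$ and Frobenius must be controlled; I would handle this with differential forms. If $x\sim_p(x+c)$, then $x^\alpha(x+c)^\beta\in F^{p\times}$ for some $\alpha,\beta\in\Z$ not both divisible by $p$ (``not both zero'' if $p=0$, where $F^{p\times}=k^\times$); in particular $\overline x\not=1$ in $F^\times/p$, so $x\notin F^p$, hence $x$ is transcendental over $k$ and, by Corollary \ref{kerd}, $dx\not=0$ while $d\big(x^\alpha(x+c)^\beta\big)=0$. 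The latter is the logarithmic derivative relation $\big(\tfrac{\alpha}{x}+\tfrac{\beta}{x+c}\big)dx=0$ in $\Omega_{F|k}^1$, which forces $\tfrac{\alpha}{x}+\tfrac{\beta}{x+c}=0$ (as $\Omega_{F|k}^1$ is an $F$-vector space and $dx\not=0$), i.e.\ $(\alpha+\beta)x+\alpha c=0$ in $F$. Since $x$ is transcendental over $k$ and $c\not=0$ in $k$, this yields $\alpha\equiv\beta\equiv 0\pmod p$, contradicting the choice of $\alpha,\beta$. Once the claim is in hand, the residual bookkeeping — the finitely many excluded values of $c$, the $k^\times$-stability, and the degenerate endpoints — is routine.
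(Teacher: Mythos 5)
Your argument is correct and essentially identical to the paper's: both exhibit each point of the line as an element of $\mathcal{I}(\overline{x}_1,\overline{x}_2,\overline{y}_1,\overline{y}_2)$ with $y_i$ a $k$-translate of $x_i$ (the paper writes $x_1-cx_2=(\frac{x_1-c_1}{x_2-c_2}-c)(x_2-c_2)$, you normalize to $z=y_1-y_2$), and both verify $x_i\not\sim_p y_i$ by the same logarithmic-differential computation forcing the exponents to vanish mod $p$. Your write-up is, if anything, slightly more careful about the $p\mid\alpha$ versus $\alpha=0$ distinction and the degenerate endpoints.
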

\begin{proof}Fix $c\in k$. On the one hand
 $ x_1- cx_2= (\frac{x_1}{x_2}-c)x_2\in k(x_1/x_2)^\times x_2$
 and on the other hand, for every  $c_1,c_2\in k^\times$ such that $c=c_1/c_2$, $$ x_1- cx_2=(\frac{x_1-c_1}{x_2-c_2}-c)(x_2-c_2)\in k(\frac{x_1-c_1}{x_2-c_2})^\times (x_2-c_2).$$
 To ensure that $   x_1- cx_2 \in \frak{I}(\overline{x}_1,\overline{x}_2)$ we have to check that $x_i\not\sim_px_i-c_i$ for  $i=1,2$. Otherwise, there would be nonzero integers $a_i,b_i\in \Z$  and $\alpha_i\in \overline{k(x_i)^F}$ such that   $$\frac{(x_i-c_i)^{b_i-1}x_i^{a_i-1}(b_ix_i-a_i(x_i-c_i))}{x_i^{2a_i}}=d(\frac{(x_i-c_i)^{b_i}}{x_i^{a_i}})=0,$$
which forces $a_i=b_i=0$: a contradiction. \end{proof}
 
\begin{lemm}\label{LinesLem2} 
For every $\overline{x} \in F^\times/k^\times$ such that $ \overline{x}\not=1$ in $F^\times/p$  and for  $m\not= n\in \Z$ satisfying (\ref{mcond}), $$\frak{l}_{F^p}(1,\overline{x}^m)^n\cap \frak{l}_{F^{p}}(1,\overline{x}^n)^m\subset \{1\}\cup F^{p\times} x^{mn}.$$ 
\end{lemm}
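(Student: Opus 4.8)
The statement concerns an element lying in both $\frak{l}_{F^p}(1,\overline{x}^m)^n$ and $\frak{l}_{F^p}(1,\overline{x}^n)^m$; I want to show any such element is trivial in $F^\times/p$ or a unit times $x^{mn}$. The plan is to unwind the definitions: an element $\overline{z}$ in $\frak{l}_{F^p}(1,\overline{x}^m)^n$ means that some lift $z$ satisfies $z^n \equiv a^p + b^p x^m \pmod{F^{p\times}}$, i.e. $z^n = c^p(a^p + b^p x^m)$ for suitable $a,b \in F$ and $c \in F^\times$; absorbing $c$ and re-scaling, it is cleanest to write $z^n = u(1 + v x^m)$ with $u \in F^{p\times}$ and $v \in F^p$ (allowing $v=0$, which is the case $\overline z = 1$ in $F^\times/p$, and similarly the case corresponding to the other vertex). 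Symmetrically, $z^m = u'(1 + v' x^n)$ with $u' \in F^{p\times}$, $v' \in F^p$. The idea is then to eliminate $z$: raising the first to the $m$-th power and the second to the $n$-th power gives $z^{mn} = u^m(1+vx^m)^m = u'^n(1+v'x^n)^n$, an identity between two elements that, modulo $F^{p\times}$, are polynomials in $x$ of "degrees" (in the relevant sense) $m$ and $n$ respectively — but $x$ is transcendental over $k$ (indeed $F/k$ is regular), so $\overline{k(x)^F}$ behaves enough like a rational function field in $x$ that one can compare factorizations.

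The key mechanism is the same differential / factoriality argument used repeatedly in the paper (cf. Lemma \ref{Tech} and Lemma \ref{ML}). Concretely: reduce modulo $F^{p\times}$, so we are comparing $(1+vx^m)^m$ and $(1+v'x^n)^n$ up to $p$-th powers, with $v,v' \in F^p$. Write $v = \tilde v^p \cdot (\text{something})$ — more precisely, since $v \in F^p$ we may write $v = w^p$ after adjusting, and similarly $v' = w'^p$; then $1+vx^m = 1 + w^p x^m$ and its $m$-th power differs from $z^{mn}/u^m$. The cleanest route is: the element $\overline{z}^{mn} \in F^\times/p$ lies in $\frak{l}_{F^p}(1,\overline{x}^{mn})$ coming from the first description raised to the $m$-th power only if $(1+w^px^m)^m$ is, modulo $p$-th powers, of the form $1 + (\ast)x^{mn}$; and from the second, $(1+w'^px^n)^n$ likewise. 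But comparing these two representations of the same class forces, by unique factorization in $k[x]$ applied to the polynomials $x^m + w^p$ and $x^n + w'^p$ (using $\gcd$ considerations and the fact $1 \le |m|,|n| \le (p-1)/2$ when $p>2$, $|m|=|n|=1$ when $p=0,2$, so the exponents are prime to $p$ and the binomial $(1+t)^m$ has no nontrivial $p$-th-power structure unless $t$ is itself essentially a power), that either $w = 0$ or $w' = 0$ — i.e. one of the two lines is the degenerate one through $x^{mn}$ — and in that case $\overline z^{mn} \in F^{p\times} x^{mn}$, whence $\overline z \in F^{p\times}x^{mn}$ or $\overline z = 1$ in $F^\times/p$ after taking appropriate roots (using $\gcd(mn,p)=1$).

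The main obstacle is the last step: extracting, from the single polynomial identity $u^m(1+w^px^m)^m = u'^n(1+w'^px^n)^n$ in $\overline{k(x)^F}$, the dichotomy $w=0$ or $w'=0$. This is where I expect to need to descend to $k[x]$ itself: clear denominators to get an equation among polynomials in $k[x]$ (using that $\overline{k(x)^F}$ is a finite extension of $k(x)$, one must be a little careful, but the elements $w, w', u, u'$ can be taken in $\overline{k(x)^F}$ and one works with the minimal polynomials as in the proof of Lemma \ref{Explicit}), and then compare the multiplicities of roots. A nonzero root $\lambda \ne 0$ of $x^m+w^p$ (if $w \notin k$, or more precisely if $1+w^px^m$ is not a $p$-th power times a monomial) contributes multiplicity $\equiv m \pmod p$ on the left but the right side has only the factor $x^n + w'^p$ available, forcing its roots to match — and matching exponents $m$ against $n$ with $m \ne n$ both in the normalized range $1 \le |\cdot| \le (p-1)/2$ is impossible unless the offending factor is absent, i.e. $w^p \in k^\times x^{?}$-type degeneracy collapses. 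Handling the $m=0$ or $n=0$ edge cases (which cannot occur here since the normalization (\ref{mcond}) forces $|m|,|n|\ge 1$) is thus unnecessary, which simplifies matters; the genuine work is the root-multiplicity bookkeeping modulo $p$, entirely parallel to Lemma \ref{Tech}(ii), together with the observation that taking $mn$-th roots is harmless modulo $p$ because $\gcd(mn,p)=1$.
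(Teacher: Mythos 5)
There is a genuine gap, and it sits exactly where you yourself place ``the main obstacle.'' Your plan is to clear denominators and run a root-multiplicity argument in $k[x]$ (in the style of Lemmas \ref{Tech} and \ref{Explicit}), but that requires the coefficients occurring in the line elements to be algebraic over $k(x)$, so that they have minimal polynomials over $k(x)$ and a meaningful divisor of zeros and poles ``as functions of $x$.'' They are not: a point of $\frak{l}_{F^p}(1,\overline{x}^m)$ is $a^p+b^px^m$ with $a,b\in F$ completely arbitrary, and $F$ has transcendence degree $\geq 2$ over $k$, so these coefficients need not lie in $\overline{k(x)^F}$; your assertion that $u,v,u',v'$ ``can be taken in $\overline{k(x)^F}$'' is unjustified and false in general. (In Lemmas \ref{Tech} and \ref{Explicit} the relevant elements are given as lying in $\overline{k(x_i)^F}$ from the start; here they are not.) This is precisely why the paper does not factor anything: it takes logarithmic differentials of the identity $(a^p+b^px^m)^n=(c^p+d^px^n)^m$ in $\Omega^1_{F|k}$, where Corollary \ref{kerd} kills $d(a^p)$ for an \emph{arbitrary} $a\in F$; one line of computation gives $c^pb^px^{m-1}=a^pd^px^{n-1}$, which together with $\overline{x}\neq 1$ in $F^\times/p$ and the normalization (\ref{mcond}) forces $a=c=0$, i.e. the element is $b^{pn}x^{mn}$ up to $F^{p\times}$. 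Your proposal never actually establishes the dichotomy ``$w=0$ or $w'=0$''; it only announces the kind of bookkeeping you would attempt, and the bookkeeping has no ambient ring to take place in.

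A secondary error: you unwind $\overline{z}\in\frak{l}_{F^p}(1,\overline{x}^m)^n$ as ``$z^n$ lies on the line,'' i.e. as the preimage of the line under the $n$-th power map. In the paper's notation $S^n$ denotes the image $\lbrace s^n : s\in S\rbrace$ (see the statement of Proposition \ref{Lines}), so the correct unwinding is $z=(a^p+b^px^m)^n$ up to $F^{p\times}$, leading to $(a^p+b^px^m)^n=(c^p+d^px^n)^m$ rather than your $u^m(1+vx^m)^m=u'^n(1+v'x^n)^n$: the exponents in your identity are attached to the wrong factors, which would distort any multiplicity count even if the coefficients did lie in $k[x]$.
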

\begin{proof} If $\frak{l}_{F^p}(1,\overline{x}^m)^n\cap \frak{l}_{F^{p}}(1,\overline{x}^n)^m\setminus \{1\}\not=\emptyset$, there exist $a,b,c,d\in F$,  
$b\neq 0\neq d$,
such that 
$$(a^p+b^px^m)^n=(c^p+d^px^n)^m.$$
Taking the logarithmic differentials and using that $dx\not= 0 $, one gets
$$ \frac{b^px^{m-1}}{a^p +b^px^m}=\frac{d^px^{n-1}}{c^p +d^px^n},$$
hence $c^pb^px^{m-1}=a^pd^px^{n-1}$, which is only possible if $m=n$ or $cb=ad=0$. But, in turn, $cb=ad=0$ is possible only if $a=c=0$.
 \end{proof}

\subsection{End of the proof of Theorem \ref{TAD}}\label{TheEnd}\textit{}\\

 \noindent From Proposition \ref{Lines} and  Lemma \ref{FTPG} $(ii)$, applied to the field extensions $F/F^p$  and $F'/F'^{\,p}$, there exist   an integer   $m\in \Z$ satisfying conditions (\ref{mcond}), and a unique field isomorphism 
 $\phi:F\tilde{\rightarrow} F'$ such that the following diagram
$$\xymatrix{F^\times\ar[r]^\phi\ar[d]&F'^{\,\times}\ar[d]\\
F^\times\!/p\ar[r]_{\overline{\phi}=\overline{\psi}^{m}}&F'^{\,\times}\!/p}$$
commutes. This concludes the proof for $p=0$. For $p>0$, one needs to work   more, using Lemma \ref{Explicit}.    \\

 \noindent Consider the group morphism 
$$\begin{tabular}[t]{llll}
$\overline{\epsilon}:$&$F^\times/k^\times$&$\rightarrow$&$F'^{\,\times p}/k'^{\,\times}$\\
&$\overline{x}$&$\rightarrow$&$\overline{\psi}(\overline{x})^{m}\overline{\phi}(\overline{x})^{-1}$.
\end{tabular}$$ 
\noindent We are to show that $m=\pm 1$ and $\overline{\epsilon}$ is trivial.\\

\noindent Fix a set-theoretic lift $\psi:F^\times\rightarrow F'^{\,\times}$ of $\overline{\psi}:F^\times/k^\times\rightarrow F'^{\,\times}/k'^{\,\times}$ and set $$\begin{tabular}[t]{llll}
$ \epsilon:$&$F^\times $&$\rightarrow$&$F'^{\,\times } $\\
&$x$&$\rightarrow$&$ \psi(x)^{m} \phi(x)^{-1}$
\end{tabular}$$

\noindent Let  $x_1,x_2\in F^\times$ be such that $(\overline{x}_1',\overline{x}_2')\in F'^{\,\times}/k'^{\,\times}$ is a good pair.  From  Lemma \ref{Explicit}, one has 
$$\overline{\psi( x_1+x_2)^{m}}=\overline{\alpha^p(\psi(x_1)^{m}+\lambda(\frac{\psi(x_1)^{Np}}{\psi(x_2)^{Np}}\psi(x_2)^{m})}$$
in $F'^{\,\times}/k'^{\,\times}$ for some $\alpha\in F'^{\,\times}$, $\lambda\in k'^{\,\times}$ and $N\in \Z$ (depending on $x_1,x_2$). Using the definition of $\epsilon$ and using that $\phi:F\rightarrow F'$ is a field homomorphism (hence is compatible with the additive structure), one has 
$$\overline{\psi( x_1+x_2)^{m}}=\overline{\epsilon(x_1+x_2)\phi(x_1+x_2)}=\overline{\frac{\epsilon(x_1+x_2)}{\epsilon(x_1)}\psi (x_1)^{m}+\frac{\epsilon(x_1+x_2)}{\epsilon(x_2)}\psi(x_2)^{m}}.$$
in $F'^{\,\times}/k'^{\,\times}$. This shows that $$\mu\alpha^p(\psi(x_1)^{m}+\lambda(\frac{\psi(x_1)^{Np}}{\psi(x_2)^{Np}}\psi(x_2)^{m})= \frac{\epsilon(x_1+x_2)}{\epsilon(x_1)}\psi(x_1)^{m}+\frac{\epsilon(x_1+x_2)}{\epsilon(x_2)}\psi(x_2)^{m}$$
for some $\mu\in k'^{\,\times}$. 

Since  $ \psi(x_1),  \psi(x_2)$ are algebraically  independent over $F'^{\,p}$ by assumption, and since $p\nmid m$, $ \psi(x_1)^{m},  \psi(x_2)^{m}$ are  linearly independent over $F'^{\,p}$ so that
$$\overline{\frac{\epsilon(x_1+x_2)}{\epsilon(x_1)}}=\overline{\alpha^p},\;\;\overline{ \frac{\epsilon(x_1+x_2)}{\epsilon(x_2)}}=\overline{\alpha^p\frac{  \psi(x_1)^{Np}}{  \psi(x_2)^{Np}}}.$$
Combining both equalities one obtains
$$\overline{\epsilon}(\overline{x}_1)=\frac{  \overline{\psi}(\overline{x}_1)^{Np}}{  \overline{\psi}(\overline{x}_2)^{Np}}\overline{\epsilon}(\overline{x}_2).$$
Now fix two primes $p'\not=p''$ distinct from $p$ and such that $p'\not\equiv 1\,(mod\, p'')$, and apply the above to $x_1,x_2^{p'}$, $x_1,x_2^{p''}$ to get
$$\overline{\epsilon}(\overline{x}_1)= \frac{  \overline{\psi}(\overline{x}_1)^{N'p}}{  \overline{\psi}(\overline{x}_2)^{N'p'p}}\overline{\epsilon}(\overline{x}_2)^{p'}$$
$$\overline{\epsilon}(\overline{x}_1)= \frac{  \overline{\psi}(\overline{x}_1)^{N''p}}{  \overline{\psi}(\overline{x}_2)^{N''p''p}}\overline{\epsilon}(\overline{x}_2)^{p''}$$
for some $N', N''\in \mathbb Z$.
Since the map $x'\rightarrow x'^{\,p}$ is injective on $F'^{\,\times}/k'^{\,\times}$, we deduce
$$(\frac{ \overline{\psi}(\overline{x}_1)^{N-N'}}{ \overline{\psi}(\overline{x}_2)^{N-N'p'}})^{p''-1}=(\frac{ \overline{\psi}(\overline{x}_1)^{N-N''}}{ \overline{\psi}(\overline{x}_2)^{N-N''p''}})^{p'-1}.$$
Since $ \overline{\psi}(\overline{x}_1)$, $ \overline{\psi}(\overline{x}_2)$ are multiplicatively independent, this forces
$$\begin{tabular}[t]{l}
$ (p''-p')N-(p''-1)N'+(p'-1)N''=0$\\
$ (p''-p')N-p'(p''-1)N'+p''(p'-1)N''=0$\\
\end{tabular}$$
Reducing modulo $p''$ and using  that $p'\not\equiv 1\, (mod\, p'')$, one sees that the matrix 
$$\left(\begin{tabular}[c]{ccc}
$ (p''-p')$&$-(p''-1)$&$(p'-1) $\\
$ (p''-p')$&$p'(p''-1)$&$p''(p'-1) $\\
\end{tabular}\right)$$
has rank $2$. Since  $(1,1,1)$ is a solution of the system above, we deduce $N=N'=N''$. This implies 
$$\overline{\epsilon}(\overline{x_2})^{(p'-1)}=\overline{\psi} (\overline{x_2})^{Np(p'-1)}, \text{ hence }\overline{\epsilon}(\overline{x}_2)=\overline{\psi} (\overline{x_2})^{Np},$$
and $$\overline{\epsilon}(\overline{x}_1)= \overline{\psi} (\overline{x}_1)^{Np}.$$ In particular, this shows  
 that $N$ does not depend on $x_2$. But it does not depend on $x_1$ either: if $y_1\in F^\times$ is another element such that $ \psi (y_1)$ is $F'/k'$-regular then either $d \psi(x_1),d\psi(y_1)$ are linearly independent over $F'$ and one applies the above with $(x_1,x_2):=(x_1,y_1)$ or one can always find $x_2\in F^\times$ such that $d\psi(x_1),d\psi(x_2)$ and $d\psi(y_1),d\psi(x_2)$ are  linearly independent over $F'$ and one applies the above with $(x_1,x_2):=(x_1,x_2)$ and $(x_1,x_2):=(y_1,x_2)$ respectively. Since, by Lemma \ref{Shift}, for every $x\in F^\times\setminus F^{\times p}$ there exists $y\in F^\times$ such that $(\overline{x},\overline{y})\in F^\times/k^\times$ is a good pair,  we deduce  $$\overline{\epsilon}(\overline{x})=\overline{\psi} (\overline{x})^{Np} , \; x\in F^\times\setminus F^{\times p}.$$
 But by multiplicativity of $\overline{\epsilon}$, $\overline{\psi}$ this also holds for $x\in F^{\times p}\setminus k^\times$ since such an $x$ can be written as $x=x_0^{p^s}$ for some $x_0\in F^\times\setminus F^{\times p}$ and integer $s\geq 1$. 
 By definition of $\overline{\epsilon}$, this means $$\overline{\psi} (\overline{x})^{m - Np}=\overline{\phi}(\overline{x}),  \; \overline{x}\in F^\times/k^\times.$$
 Since $\phi:F\tilde{\rightarrow}F'$ is a field isomorphism, this is only possible if $m - Np=\pm 1$ (otherwise, the resulting morphism of groups $\overline{\phi}:F^\times/k^\times\rightarrow F'^{\,\times}/k'^{\,\times}$ would not be surjective). Since $m $ satisfies conditions  (\ref{mcond})
 this forces $N=0$, $m=\pm 1,$ hence $$\overline{\psi} (\overline{x}) =\overline{\phi}(\overline{x})^{\pm 1},\; x\in F^\times$$  
as claimed.

\section{The fundamental theorem of projective geometry}

\begin{lemm}\label{FTPG} 
Let $k, k'$ be
fields and let $F/k$ and $F'/k'$ be field extensions. 
Let  $$\phi:F^\times/k^\times\to F'^{\,\times }/k'^{\,\times }$$ be  a group morphism. Assume that:
\begin{itemize}
\item[(i)] $\phi$ is injective and  preserves collinearity:  for any line $ L\subset F^\times/k^\times$ there is a line $L'\subset F'^{\,\times }/k'^{\,\times}$, such that $\phi(L)\subset L',$ or that
\item[(ii)] $\phi$ is an isomorphism and preserves lines:  for any line  $L\subset F^\times/k^\times$ there is a line $L'\subset F'^{\,\times}/k'^{\,\times}$, such that $\phi(L)= L'.$ 
\end{itemize}
  Assume that the image of $\phi$ is contained in no dimension  $\leq 2$ projective subspace of $F'^{\,\times}/k'^{\,\times}$. Then, in case (i) (resp., in case (ii)), there is a unique
 field  morphism  (resp. isomorphism) $\Phi: F\to F'$ 
such that the induced group morphism $\overline{\Phi}:F^\times/k^\times\to F'^{\,\times}/k'^{\,\times}$ coincides with $\phi$.\\  
 \end{lemm}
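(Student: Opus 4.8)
\emph{Proof proposal.} Regard $F$ and $F'$ as vector spaces over $k$ and $k'$, so that $F^\times/k^\times=\PP(F)$, $F'^{\,\times}/k'^{\,\times}=\PP(F')$ and the line $\frak{l}_k(\overline x,\overline y)$ is the projectivisation of the plane $kx+ky\subset F$. The plan is to prove the lemma in two stages. In the first, one forgets the multiplicative structure and reconstructs $\phi$ from a semilinear map: this is the fundamental theorem of projective geometry, which I would prove from scratch in the (possibly infinite-dimensional) setting by von Staudt's ``algebra of throws''. In the second, one uses crucially that $\phi$ is a \emph{group} morphism of $F^\times/k^\times$ to rigidify the semilinear map into a field morphism, and then one proves uniqueness.

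For the first stage I would begin by showing $\dim_k F\geq 3$, in fact $\geq 4$. By hypothesis the image of $\phi$ lies in no projective subspace of dimension $\leq 2$, so it contains four projectively independent points $P_0,P_1,P_2,P_3$; their preimages $Q_0,Q_1,Q_2,Q_3$ are projectively independent too, since if three of them, say $Q_0,Q_1,Q_2$, were collinear then $P_0,P_1,P_2$ would be, and if $Q_3$ lay in the plane through independent $Q_0,Q_1,Q_2$ then the lines $Q_0Q_3$ and $Q_1Q_2$ would meet, so ($\phi$ preserving collinearity) the lines $P_0P_3$ and $P_1P_2$ would meet -- impossible, being skew in $\PP^3$. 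Hence $\PP(F)$ has projective dimension $\geq 2$, which, together with the non-degeneracy of the image, is what the fundamental theorem of projective geometry requires (in its version for morphisms, in case (i)). For the self-contained proof of that theorem I would use von Staudt's coordinatisation: for $x\in F\setminus k$ the line $\frak{l}_k(\overline 1,\overline x)=\{\overline 1\}\cup\{\overline{x+c}:c\in k\}$ carries the coordinate $\overline{x+c}\leftrightarrow c$, with $\overline x,\overline{x+1},\overline 1$ playing the roles of $0,1,\infty$, and the addition $c,c'\mapsto c+c'$ and multiplication $c,c'\mapsto cc'$ of coordinates are built out of joining two points by a line and intersecting two lines (with auxiliary points off the line, chosen in general position, which is possible since $\dim_k F\geq 4$); these constructions are therefore transported by $\phi$, forcing $\phi(\overline{x+c})=\overline{\psi(x)+\sigma(c)}$ (where $\psi(x)$ is a lift of $\phi(\overline x)$) for a field embedding $\sigma\colon k\to k'$, an isomorphism in case (ii), which is independent of $x$ (join two such lines through an auxiliary point). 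Propagating from lines through planes to the whole space then yields a $\sigma$-semilinear map $\widetilde\Phi\colon F\to F'$, injective in case (i) and bijective in case (ii), inducing $\phi$. The one subtlety caused by the possibly infinite dimension of $F/k$ is this propagation; it can be handled by fixing a $k$-basis $(e_i)_{i\in I}$ and two indices $i_0\neq i_1$, running the planar case on every $\PP(ke_{i_0}+ke_{i_1}+ke_j)$ -- all of which give the \emph{same} $\sigma$, the coordinate constructions on the line through $\overline{e_{i_0}}$ and $\overline{e_{i_1}}$ involving lines only and $\phi$ preserving lines everywhere -- normalising the resulting semilinear maps to agree at $e_{i_0}$ (hence on all of $ke_{i_0}+ke_{i_1}$), and extending $\sigma$-semilinearly; that the result induces $\phi$ is checked point by point, each point lying in the projectivisation of a finite-dimensional subspace.

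For the second stage I would argue as follows. For $x\in F^\times$ the two maps $y\mapsto\widetilde\Phi(xy)$ and $y\mapsto\widetilde\Phi(x)\widetilde\Phi(y)$, $F\to F'$, are both $\sigma$-semilinear and, by multiplicativity of $\phi$, induce the same projective map $\overline y\mapsto\phi(\overline x)\phi(\overline y)$; a short argument (using that this induced map is injective and its image spans a $k'$-subspace of dimension $\geq 2$) shows that two such semilinear maps differ by a constant in $k'^{\,\times}$, so $\widetilde\Phi(xy)=c_x\,\widetilde\Phi(x)\widetilde\Phi(y)$ with $c_x$ depending on $x$ alone; commutativity $xy=yx$ forces $c_x=c_y=:c$ for all $x,y$, and $x=y=1$ gives $\widetilde\Phi(1)=c^{-1}$. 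Then $\Phi:=c\,\widetilde\Phi$ is additive, multiplicative and unital, hence a ring morphism $F\to F'$ -- necessarily injective, and bijective in case (ii) -- which induces $\phi$ (as $\Phi$ and $\widetilde\Phi$ differ by a scalar). For uniqueness, if $\Phi_1,\Phi_2$ both induce $\phi$ then $\chi:=\Phi_1/\Phi_2\colon F^\times\to k'^{\,\times}$ is a character with $\chi(y)=\chi(z)$ whenever $y,z$ are $k$-linearly independent (compare $\Phi_1(y+z)=\Phi_1(y)+\Phi_1(z)$ with the analogue for $\Phi_2$), hence $\chi$ is constant, so $\chi\equiv\chi(1)=1$ and $\Phi_1=\Phi_2$.

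I expect the main obstacle to be the first stage: running von Staudt's coordinatisation while keeping all the relevant configurations non-degenerate, and, above all, performing the propagation over an infinite-dimensional $\PP(F)$ so that the reconstructed embedding $\sigma$ and the semilinear map remain coherent across the finitely-many-variables pieces. Once $\widetilde\Phi$ is in hand, the multiplicative rigidification and the uniqueness are short formal manipulations.
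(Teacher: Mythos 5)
Your architecture is genuinely different from the paper's. The paper never passes through a semilinear map: it exploits the multiplicative structure from the very first step, defining $\Phi(x)$ for $x\notin k$ as the unique lift of $\phi(\overline{x})$ normalized by $\overline{\phi}(\overline{1+x})=\overline{1+\Phi(x)}$ (using only that $\phi$ sends the line through $\overline{1}$ and $\overline{x}$ into the line through $\overline{1}=\phi(\overline{1})$ and $\phi(\overline{x})$), and then proves additivity directly by intersecting pairs of lines, after which multiplicativity is formal. Your second stage (rigidifying a $\sigma$-semilinear $\widetilde\Phi$ into a ring morphism via $\widetilde\Phi(xy)=c_x\widetilde\Phi(x)\widetilde\Phi(y)$, commutativity of $F'$, and the normalization $\Phi=c\widetilde\Phi$) and your uniqueness argument are correct and complete; note that they only use that $\widetilde\Phi(y),\widetilde\Phi(z)$ are non-proportional when $\overline{y}\neq\overline{z}$, which follows from injectivity of $\phi$, so they do not depend on $\widetilde\Phi$ preserving linear independence.

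The gap is in stage 1, and it sits exactly where the real content of the lemma lies: control of degenerations \emph{in the target}. You propose to choose the auxiliary points for the von Staudt constructions, and the reference planes $\PP(ke_{i_0}+ke_{i_1}+ke_j)$ for the propagation, ``in general position, which is possible since $\dim_k F\geq 4$'' --- but general position is needed for the \emph{images}, and an injective collinearity-preserving map need not preserve it. Indeed, a $\sigma$-semilinear injection with respect to a non-surjective embedding $\sigma:k\to k'$ can send a $k$-independent triple $v_1,v_2,v_3$ to a $k'$-dependent triple, in which case the plane $\PP(kv_1+kv_2+kv_3)$ is mapped injectively \emph{into a line}; such maps can satisfy every hypothesis of your stage 1 (injective, collinearity-preserving, image spanning arbitrarily much), so your four-point argument at the start, which only shows that one quadruple of preimages is independent, does not exclude that some of the planes you coordinatize collapse. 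On a collapsed plane the transported von Staudt configuration degenerates (distinct lines of the source plane land in the same target line, so intersection points are no longer pinned down) and no embedding $\sigma$ is produced; the claim that all the planes $\PP(ke_{i_0}+ke_{i_1}+ke_j)$ ``give the same $\sigma$'' is then vacuous for those $j$. The repair uses the hypothesis that the image of $\phi$ lies in no projective subspace of dimension $\leq 2$: given a line $L$ with $\phi(L)\subset L'$, there is a point $P$ with $\phi(P)\notin L'$, the plane spanned by $L$ and $P$ then cannot collapse, and every construction and every step of the propagation must be systematically re-routed through such non-degenerate planes. This is precisely the dichotomy the paper's proof is organized around (``either $1,\Phi(x),\Phi(y)$ are linearly independent over $k'$, or replace $y$ by some $z$ making the triple independent''); as written, your argument would fail whenever a chosen configuration degenerates in $F'$.
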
 
\noindent Lemma \ref{FTPG} is elementary and well-known to experts. See for instance  \cite[Chap. II, Thm. 2.26]{Artin} for a classical formulation in the setting of finite dimensional vector spaces. For the convenience of the reader, we include here a proof in the setting of (not necessarily finite) field extensions. \\
  
\

\noindent Recall that for $x\in F^\times$ we denote by $\overline{x}$ its image $\overline{x}\in F^\times/k^\times$.

\subsection{Definition of $\Phi$}\label{PhiDef}
\subsubsection{Definition on $\lbrace 0,1\rbrace$}Set $\Phi(0)=0$, $\Phi(1)=1$.
\subsubsection{Definition on $F\setminus k$} For every $x\in F^\times\setminus k^\times$,  we have $$\overline{\phi}(\overline{1+x})\in \frak{l}_{k'}(1, \overline{\phi}(\overline{x})),$$ so that there exists a unique $\Phi(x)\in F'^{\,\times}$ such that 
$$\overline{\Phi(x)}=\overline{\phi}(\overline{x})\;\; \hbox{\rm and}\;\; \overline{\phi}(\overline{1+x})=\overline{1+\Phi(x)}.$$

\subsubsection{Definition on $k\setminus \lbrace 0,1\rbrace$}\label{PhiDefk}For every $x\in F^\times $, set $\Phi_x(0)=0$. For every $\alpha\in k^\times$,  $$\overline{\Phi(\alpha x)}=\overline{\phi}(\overline{x})=\overline{\Phi(x)},$$ so that there exists a unique $\Phi_x(\alpha)\in k'^{\,\times}$ such that 
$$\Phi(\alpha x)=\Phi_x(\alpha)\Phi(x)$$
Note that by definition $\Phi_x(1)=1$, $\Phi_x(k^\times)\subset k'^{\,\times}$.

\

\begin{lemm}  For every $x,y\in F^\times\setminus k^\times$, $\Phi_x=\Phi_y$.

\end{lemm}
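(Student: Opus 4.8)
The plan is to reduce to the case where $1,x,y$ are linearly independent over $k$, prove the additive identity $\Phi(x+y)=\Phi(x)+\Phi(y)$ there, and then extract $\Phi_x=\Phi_y$ by comparing coefficients. For the reduction, note first that we may assume $\dim_k F\ge 3$, since otherwise $F^\times/k^\times$ is itself a line, contradicting the hypothesis that the image of $\phi$ lies in no $\le 2$-dimensional projective subspace. Given arbitrary $x,y\in F^\times\setminus k^\times$, if $1,x,y$ are already linearly independent over $k$ we are in the main case; otherwise, since a vector space is never the union of two proper subspaces, we may choose $z\in F^\times$ belonging to neither $k\cdot 1+k\cdot x$ nor $k\cdot 1+k\cdot y$, so that $1,x,z$ and $1,y,z$ are each linearly independent over $k$, and the main case gives $\Phi_x=\Phi_z=\Phi_y$.

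Now assume $1,x,y$ linearly independent over $k$, i.e.\ $\overline 1,\overline x,\overline y$ non-collinear in $F^\times/k^\times$. The first — and only non-formal — step, call it $(\star)$, is that $\phi$ sends this triple to a non-collinear triple $\overline 1,\overline{\Phi(x)},\overline{\Phi(y)}$, equivalently that $1,\Phi(x),\Phi(y)$ are linearly independent over $k'$. In case (ii) this is immediate, since $\phi^{-1}$ also preserves collinearity and so cannot turn a non-collinear triple into three collinear points; in case (i) it is exactly (the non-degenerate content of) the fundamental theorem of projective geometry, and is where the hypothesis on the image of $\phi$ is used, so I would take it as known (cf.\ \cite[Chap.\ II, Thm.\ 2.26]{Artin}). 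Granting $(\star)$, the argument is a double coordinate comparison. Since $\overline{x+y}\in\frak l_k(\overline x,\overline y)$, its image $\overline{\Phi(x+y)}=\overline\phi(\overline{x+y})$ lies on $\frak l_{k'}(\overline{\Phi(x)},\overline{\Phi(y)})$, so $\Phi(x+y)=u\,\Phi(x)+v\,\Phi(y)$ for some $(u,v)\in k'^{\,2}\setminus\{0\}$. On the other hand $\overline{1+x+y}=\overline{(1+x)+y}\in\frak l_k(\overline{1+x},\overline y)$; using the defining properties $\overline\phi(\overline{1+x})=\overline{1+\Phi(x)}$ and $\overline\phi(\overline{1+x+y})=\overline{1+\Phi(x+y)}$ of $\Phi$ on $F\setminus k$, this says that $\overline{1+u\,\Phi(x)+v\,\Phi(y)}$ lies on $\frak l_{k'}(\overline{1+\Phi(x)},\overline{\Phi(y)})$; since $1,\Phi(x),\Phi(y)$ are linearly independent over $k'$, the projective coordinates of this point in the basis $\{1,\Phi(x),\Phi(y)\}$ are proportional to $(1,1,\ast)$, which forces $u=1$. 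Running the symmetric computation through $\frak l_k(\overline{1+y},\overline x)\ni\overline{1+x+y}$ forces $v=1$, whence $\Phi(x+y)=\Phi(x)+\Phi(y)$.

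Finally I would fix $\alpha\in k^\times$: then $1,\alpha x,\alpha y$ are still linearly independent over $k$ with the same $\phi$-images, so the additive identity applies and gives $\Phi(\alpha x+\alpha y)=\Phi(\alpha x)+\Phi(\alpha y)=\Phi_x(\alpha)\Phi(x)+\Phi_y(\alpha)\Phi(y)$; but $\alpha x+\alpha y=\alpha(x+y)$ with $x+y\in F^\times\setminus k^\times$, so the left-hand side also equals $\Phi_{x+y}(\alpha)\bigl(\Phi(x)+\Phi(y)\bigr)$. Using once more that $\Phi(x),\Phi(y)$ are linearly independent over $k'$, comparing coefficients yields $\Phi_x(\alpha)=\Phi_{x+y}(\alpha)=\Phi_y(\alpha)$ for all $\alpha\in k^\times$, i.e.\ $\Phi_x=\Phi_y$, which completes the main case and hence the lemma. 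The only real obstacle is $(\star)$: everything else is bookkeeping with lines and with the normalisations built into the definition of $\Phi$, but $(\star)$ is the genuine projective-geometric input — trivial in case (ii) and, in case (i), precisely the point where the non-degeneracy hypothesis on the image of $\phi$ cannot be dispensed with.
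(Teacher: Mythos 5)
Your argument has a genuine gap at the step you yourself flag as $(\star)$: that linear independence of $1,x,y$ over $k$ forces linear independence of $1,\Phi(x),\Phi(y)$ over $k'$. In case (i) of Lemma \ref{FTPG} the map $\phi$ is only assumed injective and collinearity-preserving, and such a map is not obviously prevented from collapsing a projective plane into a projective line: every point of the plane $\P(k+kx+ky)$ lies on the line joining $\overline{1}$ to some point of $\frak{l}_k(\overline{x},\overline{y})$, and if $\overline{1},\overline{\phi}(\overline{x}),\overline{\phi}(\overline{y})$ happened to be collinear all these lines would be forced into the single line $L'$ they span; injectivity alone yields no contradiction when $k'$ is much larger than $k$. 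You cannot import $(\star)$ from \cite[Chap.\ II, Thm.\ 2.26]{Artin} either: the present lemma is a step \emph{inside} the paper's self-contained proof of the fundamental theorem (for possibly infinite extensions), so quoting that theorem's conclusion here is circular, and the hypothesis that the image of $\phi$ is contained in no dimension $\leq 2$ projective subspace does not visibly rule out the collapse of one particular plane. This is exactly why the paper's proof conditions on the \emph{target} configuration rather than the source: its Case 1 treats the pairs for which $1,\Phi(x),\Phi(y)$ are linearly independent over $k'$ directly, by intersecting the images of the two lines $\frak{l}_k(\overline{x},\overline{y})$ and $\frak{l}_k(\overline{1+\alpha x},\overline{1+\alpha y})$, which meet in the $\alpha$-independent point $\overline{x-y}$, while its Case 2 reduces any remaining pair to Case 1 by choosing $z$ with $\Phi(z)$ outside the span of $1,\Phi(x),\Phi(y)$ (this is where the non-degeneracy hypothesis enters). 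No statement of the form $(\star)$ is ever needed. Your own reduction step (choosing $z$ with $1,x,z$ and $1,y,z$ independent over $k$) inherits the same gap, since applying your main case to $(x,z)$ again requires target-side independence.

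Setting $(\star)$ aside, the rest of your route is correct and genuinely different from the paper's: you first prove $\Phi(x+y)=\Phi(x)+\Phi(y)$ in the independent case (this is the paper's later Lemma \ref{Add}, Case 1, by the same double intersection through $\overline{1+x+y}$) and then extract $\Phi_x=\Phi_{x+y}=\Phi_y$ by comparing coefficients in $\Phi(\alpha x+\alpha y)=\Phi(\alpha(x+y))$, whereas the paper gets $\Phi_x=\Phi_y$ directly from the intersection point $\overline{x-y}$ without invoking additivity. Your version does go through verbatim in case (ii), where $(\star)$ is immediate because $\phi^{-1}$ also preserves lines. The fix for case (i) is simply to reorganize your case split around the linear independence of $1,\Phi(x),\Phi(y)$ over $k'$ instead of that of $1,x,y$ over $k$.
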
 

\begin{proof} $\;$
\noindent\underline{Case 1}: The elements $1$, $\Phi(x)$, $\Phi(y)$ are linearly independent over $k'$. In particular, for every $\alpha\in k^\times$,  $$\frak{l}_{k'}(\overline{\Phi(x)},\overline{\Phi(y)})\not=\frak{l}_{k'}(\overline{1+\Phi_x(\alpha)\Phi(x)},\overline{1+\Phi_y(\alpha)\Phi(y)}).$$
Since $\overline{\phi}$ preserves collinearity and
$$ \overline{(1+\alpha x)-(1+\alpha y)}=\overline{x-y}\in \frak{l}_k(\overline{x},\overline{y})\cap  \frak{l}_k(\overline{1+\alpha x},\overline{1+\alpha y})$$
we see  that
$$\lbrace \overline{\phi}(\overline{x-y})\rbrace=\frak{l}_{k'}(\overline{\Phi(x)},\overline{\Phi(y)})\cap\frak{l}_{k'}(\overline{1+\Phi_x(\alpha)\Phi(x)},\overline{1+\Phi_y(\alpha)\Phi(y)})$$
is \textit{independent of $\alpha$} while, on  the other hand, a direct computation shows that for every $\alpha\in k^\times$,
$$\lbrace \overline{\Phi_x(\alpha)\Phi(x)-\Phi_y(\alpha)\Phi(y)}\rbrace= \frak{l}_{k'}(\overline{\Phi(x)},\overline{\Phi(y)})\cap\frak{l}_{k'}(\overline{1+\Phi_x(\alpha)\Phi(x)},\overline{1+\Phi_y(\alpha)\Phi(y)}).$$
This forces $\Phi_x(\alpha)=\Phi_y(\alpha)$.\\

\noindent\underline{Case 2}:The elements $1$, $\Phi(x)$, $\Phi(y)$ are linearly dependent over $k'$. Then, by assumption,  there exists $z\in F^\times$ such that $1$, $\Phi(x)$, $\Phi(z)$ (equivalently $1$, $\Phi(y)$, $\Phi(z)$) are linearly independent over $k'$. By the above $\Phi_x=\Phi_z$ and $\Phi_y=\Phi_z$.
 
\end{proof}

\noindent In particular, for every $\alpha,\beta\in k^\times$, $x\in F^\times$, we have $$\Phi(\alpha\beta)\Phi(x)=\Phi(\alpha\beta x)=\Phi (\alpha)\Phi(\beta x)=\Phi(\alpha)\Phi(\beta)\Phi(x)$$
whence $$\Phi(\alpha\beta)=\Phi(\alpha)\Phi(\beta).
$$
Since by definition $\Phi(1)=1$, this hows that $\Phi:k^\times\rightarrow k'^{\,\times}$ is a group morphism.\\

\subsection{}We have to show that the map $\Phi:F\rightarrow F'$  defined in Subsection \ref{PhiDef} is a field morphism. By definition, we already have $\Phi(0)=0$, $\Phi(1)=1$ and for  $\alpha\in k$, $x\in F^\times\setminus k^\times$,
 $$\Phi(\alpha x)=\Phi(\alpha)\Phi(x),\;\overline{\Phi(x)}=\overline{\phi}(\overline{x}),\;  \overline{\Phi(1+x)}=\overline{\phi}(\overline{1+x})=\overline{1+\Phi(x)}.$$
 
 \
 
 \begin{lemm}\label{Add}For every $x,y\in F $   we have 
 $$\Phi(x+ y)= \Phi(x)+ \Phi(y).$$
 \end{lemm}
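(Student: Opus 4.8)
The plan is to recover addition projectively, by the parallelogram construction, and to read off compatibility of $\Phi$ with addition from the fact that $\overline{\phi}$ preserves collinearity. The crux is the \emph{generic case}: $x,y\in F\setminus k$ with $1,x,y$ linearly independent over $k$. Here one has the two elementary identities in $F^\times/k^\times$
\[
\frak{l}_k(\overline{1+x},\overline{y})\cap\frak{l}_k(\overline{1+y},\overline{x})=\{\overline{1+x+y}\},\qquad
\frak{l}_k(\overline{1},\overline{1+x+y})\cap\frak{l}_k(\overline{x},\overline{y})=\{\overline{x+y}\},
\]
each proved by a one-line computation with the parametrisation $\frak{l}_k(\overline{u},\overline{v})=\{\overline{au+bv}\}$. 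Applying $\overline{\phi}$ and using that $\overline{\phi}$ preserves collinearity, is injective, and --- by the hypothesis that its image lies in no projective subspace of dimension $\le 2$ --- carries non-collinear triples to non-collinear triples (so $1,\Phi(x),\Phi(y)$ stay linearly independent over $k'$), together with the defining relations $\overline{\phi}(\overline{t})=\overline{\Phi(t)}$ and $\overline{\phi}(\overline{1+t})=\overline{1+\Phi(t)}$ for $t\notin k$, these identities transport to $\overline{\phi}(\overline{1+x+y})=\overline{1+\Phi(x)+\Phi(y)}$ and $\overline{\phi}(\overline{x+y})=\overline{\Phi(x)+\Phi(y)}$ in $F'^{\,\times}/k'^{\,\times}$.

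Next I would upgrade these equalities of classes to honest identities in $F'^{\,\times}$. Feeding them into the definition of $\Phi(x+y)$ (licit since $x+y\notin k$), write $\Phi(x+y)=\kappa(\Phi(x)+\Phi(y))$ and $1+\Phi(x+y)=\kappa'(1+\Phi(x)+\Phi(y))$ with $\kappa,\kappa'\in k'^{\,\times}$; eliminating and using that $1$ and $\Phi(x)+\Phi(y)$ are linearly independent over $k'$ forces $\kappa=\kappa'=1$, i.e. $\Phi(x+y)=\Phi(x)+\Phi(y)$ in the generic case. To obtain $\Phi(1+w)=1+\Phi(w)$ for all $w\notin k$ I would then apply the generic case just proved to the (again generic) pair $(1+x,1+y)$, whose sum $2+x+y$ equals $1+(1+x+y)$: comparing the resulting value of $\overline{\phi}(\overline{2+x+y})$ with the second defining relation of $\Phi(1+(x+y))$ and expanding $\Phi(1+x)=\lambda_1(1+\Phi(x))$, $\Phi(1+y)=\lambda_2(1+\Phi(y))$, a short scalar computation (in which $1+\tau=2\tau$ forces $\tau=1$ in every characteristic) gives $\Phi(1+(x+y))=1+\Phi(x+y)$; since every $w\in F\setminus k$ can be written $x+y$ with $(x,y)$ generic (possible because the hypothesis on the image forces $\dim_k F\ge 3$), this yields $\Phi(1+w)=1+\Phi(w)$ throughout. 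Along the way I also need $\Phi(-1)=-1$: the relations defining $\Phi$ show that $\Phi|_{k^\times}$ is an \emph{injective} group morphism, so $\Phi(-1)$ is the unique element of order dividing $2$ in $k'^{\,\times}$ distinct from $1$ (trivially $-1$ when $p=2$), and hence $\Phi(-w)=\Phi(-1)\Phi(w)=-\Phi(w)$ by homogeneity.

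From here the remaining, non-generic, cases are bookkeeping. Using $\Phi(1+w)=1+\Phi(w)$, multiplicativity of $\Phi|_{k^\times}$ and homogeneity one gets $\Phi(\alpha+w)=\Phi\!\big(\alpha(1+\alpha^{-1}w)\big)=\Phi(\alpha)+\Phi(w)$ for $\alpha\in k^\times$, $w\notin k$. Additivity of $\Phi$ on $k$ follows by perturbation: for $\alpha,\beta\in k$ with $\alpha+\beta\ne0$ choose $z,w\in F\setminus k$ with $1,z,w$ linearly independent over $k$, write $(\alpha+\beta)z=(\alpha z+w)+(\beta z-w)$, apply the generic case three times, use $\Phi(-w)=-\Phi(w)$, and divide by $\Phi(z)\in F'^{\,\times}$. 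Finally, for $x,y\notin k$ with $1,x,y$ linearly \emph{dependent} one writes $y=a+bx$ with $a\in k$, $b\in k^\times$, so $x+y=a+(1+b)x$, and reduces to the ``one argument in $k$'' case and homogeneity; the cases where $x$, $y$ or $x+y$ lands in $k$ are dispatched the same way. Collecting the cases gives $\Phi(x+y)=\Phi(x)+\Phi(y)$ for all $x,y\in F$.

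The hard part is the generic case, more precisely the two intertwined points: (a) promoting the equalities of classes in $F'^{\,\times}/k'^{\,\times}$ to equalities in $F'^{\,\times}$ --- pinning the $k'^{\,\times}$-scalars to $1$ --- which is what forces the simultaneous use of the pairs $(x,y)$ and $(1+x,1+y)$; and (b) the transfer of general position, i.e. that $\overline{\phi}$ sends triples linearly independent over $k$ to triples linearly independent over $k'$, which is exactly where injectivity of $\phi$ and the hypothesis that the image spans no small subspace get used. Everything else is routine linear algebra over $k$ and $k'$ together with a careful ordering of the degenerate cases so as to avoid circularity.
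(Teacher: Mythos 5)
Your overall architecture --- the parallelogram/line-intersection computation in a ``generic'' case followed by bookkeeping for the degenerate configurations --- is the same as the paper's, and most of the details (the two intersection identities, the pinning of the $k'^{\,\times}$-scalars by playing the pair $(x,y)$ against $(1+x,1+y)$, the treatment of $\Phi(\alpha+\beta)$ by perturbation with an element of $F\setminus k$) are sound. But there is a genuine gap, and it sits exactly at the point you yourself flag as load-bearing: the claim that $\overline{\phi}$ ``carries non-collinear triples to non-collinear triples,'' i.e.\ that $1,\Phi(x),\Phi(y)$ are linearly independent over $k'$ whenever $1,x,y$ are linearly independent over $k$. This does not follow from the hypotheses in case (i) of Lemma \ref{FTPG}, where one only knows $\phi(L)\subset L'$ rather than $\phi(L)=L'$: nothing prevents a point off a line $L$ from being mapped into $L'$. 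Nor does the hypothesis that the image of $\phi$ is contained in no dimension $\leq 2$ projective subspace rescue the claim: if $1,\Phi(x),\Phi(y)$ were collinear, one checks that the whole projective plane spanned by $1,\overline{x},\overline{y}$ maps into a single line, but the image of an $n$-dimensional projective subspace containing that plane is then only confined to an $(n-1)$-dimensional subspace, which grows without bound with $n$; so no contradiction with the global spanning hypothesis is immediate. (Under hypothesis (ii), where $\phi$ is a bijection with $\phi(L)=L'$, your transfer of general position is true and easy; but Lemma \ref{Add} must be proved under the weaker hypothesis (i) as well.)

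The paper's proof is organized precisely to sidestep this: its case distinction is on whether the \emph{image} triple $1,\Phi(x),\Phi(y)$ is linearly independent over $k'$, not the source triple. When the images are independent, the intersection-of-lines argument runs essentially as in your generic case. When they are dependent (but $1,\overline{x},\overline{y}$ are distinct in $F^\times/k^\times$), the spanning hypothesis is invoked only for what it actually gives --- the existence of an auxiliary $z$ with $1,\Phi(x),\Phi(z)$ linearly independent over $k'$ --- and $\Phi(x+y)=\Phi(x)+\Phi(y)$ is then recovered by computing $\Phi(x+y+z)$ in two ways. You should either restructure your generic case along these lines or supply an actual proof of the general-position transfer; as written, that step is asserted rather than proved, and it is the one step of your argument that is not routine.
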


  \begin{proof} We may assume $x,y\not= 0$.\\
  
\noindent\underline{Case 1}:   $1$, $\Phi(x)$, $\Phi(y)$ are linearly independent over $k'$. Then
 $$\frak{l}_{k'}(\overline{1+ \Phi(x)},\overline{\Phi(y)})\not=\frak{l}_{k'}(\overline{  \Phi(x)},\overline{1+ \Phi(y)}).$$
 Since $\overline{\phi}$ preserves collinearity and
$$ \overline{1+ x +  y }\in \frak{l}_k(\overline{1+  x},\overline{y})\cap  \frak{l}_k(\overline{  x},\overline{1+  y})$$
we see  that
$$\overline{\phi}(\overline{1+  x +  y})=\frak{l}_{k'}(\overline{1+\Phi(x)},\overline{\Phi(y)})\cap \frak{l}_{k'}(\overline{  \Phi(x)},\overline{1+ \Phi(y)})$$
 while, on  the other hand, a direct computation shows that
$$\overline{1+ \Phi(x)+ \Phi(y)}=\frak{l}_{k'}(\overline{1+ \Phi(x)},\overline{\Phi(y)})\cap \frak{l}_{k'}(\overline{  \Phi(x)},\overline{1+ \Phi(y)}).$$
This forces $$\overline{1+\Phi( x +  y)}= \overline{\phi}(\overline{1+ x + y})=\overline{1+ \Phi(x)+  \Phi(y)}$$ whence $\Phi(  x +  y)= \Phi(x)+  \Phi(y)$.\\

\noindent\underline{Case 2}:  $1$, $\overline{x}$, $\overline{y}$ are all distinct in $F^\times/k^\times$ (hence $1$, $\overline{\Phi(x)}$, $\overline{\Phi(y)}$ are all distinct in $F'^{\,\times}/k'^{\,\times}$; recall $\overline{\phi}$ is injective)  but  $1$, $\Phi(x)$, $\Phi(y)$ are linearly  dependent over $k'$. Then, by assumption, there exists $z\in F^\times$ such that $1$, $\Phi(x)$, $\Phi(z)$ (equivalently $1$, $\Phi(y)$, $\Phi(z)$) are linearly independent over $k'$. Then, by the above, we have
\begin{itemize}[leftmargin=* ,parsep=0cm,itemsep=0cm,topsep=0cm]
\item[(i)] since $1$, $ \Phi(x)$, $\Phi(z)$ are linearly independent over $k'$, we have $$\Phi( x+z)= \Phi(x)+\Phi(z);$$
\item[(ii)]  since $1$, $\Phi(   x+  y)$, $\Phi(z)$ are linearly independent over $k'$, we have  $$\Phi(  x+  y+z)= \Phi( x+ y)+\Phi(z);$$
\item[(iii)] since  $1$, $\Phi(  x+z)= \Phi(x)+\Phi(z)$, $ \Phi(y)$ are linearly independent over $k'$, we have  $$\Phi(  x+   y+z)= \Phi(  x+z)+\Phi(y)= \Phi(x)+\Phi(z)+ \Phi(y).$$
\end{itemize}
Combining (ii), (iii), we obtain, again, $\Phi(\alpha x+\beta y)=\Phi(\alpha)\Phi(x)+\Phi(\beta) \Phi(y)$.\\

\noindent\underline{Case 3}:  $x=\alpha\in k^\times$, $y=x\in F^\times\setminus k^\times$. In \ref{PhiDefk} we established $$\Phi(\alpha+x)=\Phi(\alpha)\Phi(1+\alpha^{-1}x)\text{ and }\Phi(\alpha^{-1}x)=\Phi(\alpha)^{-1}\Phi(x),$$ so that it is enough to show that $\Phi(1+x)=1+\Phi(x)$. By assumption, there exists $y\in F $ such that  $1$, $\Phi(x)$, $\Phi(y)$ are linearly independent over $k'$. In particular, by Case 1, $\Phi(x+y)=\Phi(x)+\Phi(y)$. Then, the elements $1$, $\Phi(1+x)$, $\Phi(y)$ are also  linearly independent over $k'$. Hence, by Case 1, $\Phi(1+x+y)=\Phi(1+x)+\Phi(y)$. As a result,
$$\overline{\Phi(1+x+y)}=\overline{1+\Phi(x+y)}=\overline{1+\Phi(x)+\Phi(y)}$$
and 
$$\overline{\Phi(1+x+y)}=\overline{ \Phi(1+x) +\Phi(y)},$$
which forces $\Phi(1+x)=1+\Phi(x)$.\\

\noindent\underline{Case 4}:  $x=\alpha,y=\beta\in k^\times$. Let $x\in F^\times\setminus k^\times$. Then, on the one hand $$\Phi(1+\alpha x+\beta x)\stackrel{(1)}{=}\Phi(1+\alpha x)+\Phi(\beta x)\stackrel{(2)}{=}1+\Phi(\alpha x)+\Phi(\beta x)=1+(\Phi(\alpha)+\Phi(\beta))\Phi(x),$$
where (1) is by Case 2 and (2) is by Case 3. While, on the other hand, if $\alpha+\beta\not= 0$, Case 3 also yields
$$\Phi(1+\alpha x+\beta x)=1+\Phi((\alpha+\beta)x)=1+\Phi(\alpha+\beta)\Phi(x).$$
This shows $\Phi(\alpha+\beta)=\Phi(\alpha)+\Phi(\beta)$ unless $\beta=-\alpha$. For this last case, we have, by Case 3,
$$\Phi(1+\alpha x)=1+\Phi(\alpha)\Phi(x),\,\Phi(1-\alpha x)=1+\Phi(-\alpha)\Phi(x)$$ and, by Case 2, $$\Phi(2)=\Phi((1+\alpha x)+(1-\alpha x))=\Phi(1+\alpha x)+\Phi(1-\alpha x).$$ This implies 
$$2+(\Phi(\alpha)+\Phi(-\alpha))x=\Phi(2)\in k'$$
hence $\Phi(\alpha)+\Phi(-\alpha)=0$.
 \end{proof}
 
 \

 \begin{coro}For every $x,y\in F^\times $   we have 
 $$\Phi(x  y)= \Phi(x)  \Phi(y).$$
 \end{coro}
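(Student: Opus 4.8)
The plan is to derive the multiplicativity of $\Phi$ from its additivity (Lemma~\ref{Add}) together with the identity $\overline{\Phi(z)}=\overline{\phi}(\overline z)$ for every $z\in F^\times$ (this is the defining property of $\Phi$ when $z\notin k^\times$, and both sides equal $1$ when $z\in k^\times$, since $\Phi(k^\times)\subseteq k'^\times$) and the multiplicativity of $\overline{\phi}$ on $F^\times/k^\times$. First I would dispose of the case where one of $x,y$ lies in $k^\times$: if $y\in k^\times$ then $\Phi(xy)=\Phi(yx)=\Phi(y)\Phi(x)$ by the relation $\Phi(\alpha z)=\Phi(\alpha)\Phi(z)$ established in~\ref{PhiDefk} and the fact that $\Phi|_{k^\times}$ is a group morphism, and symmetrically if $x\in k^\times$ (using that $F$ is commutative).

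It then remains to treat $x,y\in F^\times\setminus k^\times$. Since $\overline{\phi}$ is a group morphism, $\overline{\Phi(xy)}=\overline{\phi}(\overline x\,\overline y)=\overline{\Phi(x)}\cdot\overline{\Phi(y)}=\overline{\Phi(x)\Phi(y)}$, so there is a unique $c(x,y)\in k'^\times$ with $\Phi(xy)=c(x,y)\,\Phi(x)\Phi(y)$, and it suffices to show $c(x,y)=1$. The key observation is that, since $x\notin k^\times$, injectivity of $\overline{\phi}$ forces $\overline{\Phi(x)}\neq 1$, i.e. $1$ and $\Phi(x)$ are linearly independent over $k'$; multiplying by $\Phi(y)\neq 0$, the elements $\Phi(y)$ and $\Phi(x)\Phi(y)$ are then linearly independent over $k'$ as well. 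Because $x\notin k^\times$ we also have $x+1\notin k^\times$ and $x+1\neq 0$, so $c(x+1,y)$ is defined and $\Phi(x+1)=1+\Phi(x)$ by additivity. Applying $\Phi$ to the identity $(x+1)y=xy+y$ gives, on one side,
$$\Phi\big((x+1)y\big)=c(x+1,y)\,\Phi(x+1)\Phi(y)=c(x+1,y)\big(1+\Phi(x)\big)\Phi(y),$$
and, on the other side, by additivity,
$$\Phi\big((x+1)y\big)=\Phi(xy)+\Phi(y)=c(x,y)\,\Phi(x)\Phi(y)+\Phi(y).$$
Comparing the coefficients of $\Phi(y)$ and of $\Phi(x)\Phi(y)$ in these two expressions (legitimate by the linear independence above) yields $c(x+1,y)=1$ and $c(x,y)=c(x+1,y)$, hence $c(x,y)=1$, i.e. $\Phi(xy)=\Phi(x)\Phi(y)$. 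Combined with Lemma~\ref{Add} and $\Phi(0)=0$, $\Phi(1)=1$, this shows $\Phi\colon F\to F'$ is a field morphism (an isomorphism in case~(ii), since $\phi$ is then surjective), with $\overline{\Phi}=\phi$ and uniqueness built into the construction, completing the proof of Lemma~\ref{FTPG}.

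I do not expect a genuine obstacle here; the argument is essentially bookkeeping once the right identity is used. The only points needing care are: checking that $x\notin k^\times$ makes $x+1$ a nonzero element outside $k^\times$, so that the computation is symmetric and both $\Phi(x+1)=1+\Phi(x)$ and the defining relation for $c(x+1,y)$ are available; verifying that $\overline{\Phi(xy)}=\overline{\phi}(\overline{xy})$ still holds when $xy$ happens to land in $k^\times$ (both sides are then $1$); and the linear-independence step, which rests only on $\Phi(y)\neq 0$ and $\Phi(x)\notin k'$. In particular no assumption on the cardinality of $k$ or $k'$ is required.
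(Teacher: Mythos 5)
Your proof is correct and follows essentially the same route as the paper's: both apply $\Phi$ to the distributive identity (you use $(x+1)y=xy+y$, the paper uses $x(1+y)=x+xy$), invoke additivity from Lemma~\ref{Add}, and pin down the ambiguous scalar in $k'^{\,\times}$ via the multiplicativity of $\overline{\phi}$ and a linear-independence argument. Your version is merely a bit more explicit about the edge cases ($x$ or $y$ in $k^\times$, and the nondegeneracy needed for comparing coefficients), which the paper leaves implicit.
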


\begin{proof}  By Lemma \ref{Add}, we have $$\Phi(x(1+y))=\Phi(x)+\Phi(xy)\text{ and }\Phi(1+y)=1+\Phi(y).$$ Also, since $\overline{\phi}:F^\times/k^\times\rightarrow F'^{\,\times}/k'^{\,\times}$ is a group morphism, we have $$\overline{\Phi(xy)}=\overline{\phi(xy)}=\overline{\phi}(\overline{x})\overline{\phi}(\overline{y})=\overline{\Phi(x)\Phi(y)}$$
and 
\begin{multline*}
$$\overline{\Phi(x)+\Phi(xy)}=\overline{\Phi(x(1+y))}=\overline{\phi(\overline{x}\overline{(1+y)})}=\overline{\phi}(\overline{x})\overline{\phi}(\overline{1+y})=\\=\overline{\Phi(x)(1+\Phi(y))}=\overline{\Phi(x)+\Phi(x)\Phi(y)}.
\end{multline*}
This forces $\Phi(xy)=\Phi(x)\Phi(y)$ as claimed.

\end{proof}

\subsection{End of the proof} At this stage, we have shown that there exists a field morphism $\Phi:F\rightarrow F'$ such that 
\begin{itemize} 
\item $\Phi(k)\subset k'$; 
\item  the induced group morphism $\overline{\Phi}:F^\times/k^\times\rightarrow F'^{\,\times}/k'^{\,\times}$ coincides with $$\overline{\phi}~:~F^\times/k^\times~\rightarrow~F'^{\,\times}/k'^{\,\times}.$$
\end{itemize}
It remains to prove the unicity and  Part (ii): $\Phi:F\rightarrow F'$ is an isomorphism if   $\overline{\phi}:F^\times/k^\times\rightarrow F'^{\,\times}/k'^{\,\times}$ is.

\subsubsection{Unicity}Let $\Psi:F\rightarrow F'$ be another field morphism such that    $\Psi(k)\subset k'$ and  $\overline{\Psi}=\overline{\phi}$. Then, for every $x\in F^\times $ there exists a unique $\lambda_x\in k'^{\,\times}$ such that $\Phi(x)=\lambda_x\Psi(x)$. But necessarily we have $\lambda_1=1$ and, if $x\in F^\times\setminus k^\times$, so that $\Phi(x),\Psi(x)\in F'^{\,\times}\setminus k'^{\,\times}$, and
  $$1+\lambda_x\Psi(x)=1+\Phi(x)=\Phi(1+x)=\lambda_{1+x}\Psi(1+x)=\lambda_{1+x}(1+\Psi(x)).$$
This shows $\lambda_x=\lambda_{1+x}=1$. If $\alpha\in k^\times$ and $x\in F^\times\setminus k^\times$, we have $$\lambda_\alpha \Psi(\alpha)\lambda_x\Psi( x)=\Phi(\alpha)\Phi(x)=\Phi(\alpha x)=\lambda_{\alpha x}\Psi(\alpha x).$$
Since $\lambda_{\alpha x}=\lambda_x=1$ by the above, this forces $\lambda_\alpha=1$ as well.\\

\noindent This concludes the proof of Lemma \ref{FTPG} (i).

\subsubsection{Part (ii)}Lemma \ref{FTPG} (ii)  follows formally from the existence and uniqueness assertion in Lemma \ref{FTPG} (i). Indeed, since $\phi^{-1}:F'^{\,\times}/k'^{\,\times}\rightarrow F^\times/k^\times$ is also a group isomorphism which preserves lines, there exists a unique field morphism $\Psi:F'\rightarrow F$ such that the induced group morphism $\overline{\Psi}:F'^{\,\times}/k'^{\,\times}\rightarrow F^\times/k^\times$ coincides with $\phi^{-1}:F'^{\,\times}/k'^{\,\times}\rightarrow F^\times/k^\times$. Applying again this argument with the identity morphisms of $F^\times/k^\times$, $F'^{\,\times}/k'^{\,\times}$, one gets $\Phi\circ \Psi=Id_{F'}$, $\Psi\circ \Phi= Id_F$.

\bibliographystyle{plain}
\bibliography{MilnorSubmission}
 
\end{document}